\def\R {\mathbb R}
\def\T{\mathcal T}
\def\E{\mathcal E}
\def\P{\mathcal P}
\def\M{\mathcal M}
\def\H{\mathcal H}
\def\D{\mathcal D}
\def\ds{\displaystyle}
\def\pa{\partial}
\def\Linf{L^\infty}
\def\m{M_0}
\def\moy{\langle f \rangle}
\def\vinf{v^\infty}
\def\uinf{u^\infty}
\def\TO{\T_\Omega}
\def\To{\T_\omega}
\def\Ke{K^*}
\def\ts{\tau_\sigma}
\def\tse{\tau_{\sigma^*}}
\def\mKe{m_{K^*}}
\def\uKe{u_{K^*}}
\def\vKe{v_{K^*}}
\def\vK{v_K}
\def\phiKe{\varphi_{K^*}}
\def\psiKe{\psi_{K^*}}
\def\psiLe{\psi_{L^*}}
\def\phiK{\varphi_K}
\def\HPhi{{\mathcal H}_\Phi}
\def\DPhi{{\mathcal D}_\Phi}
\def\Hd{{\mathcal H}_2}
\def\Dd{{\mathcal D}_2}
\def\R{\mathbb R}
\newcommand{\mo}{m_\omega}
\newcommand{\mO}{m_\Omega}
\newtheorem{theorem}{\textbf{Theorem}}[section]
\newtheorem{lemma}[theorem]{\textbf{Lemma}}
\newtheorem{proposition}[theorem]{\textbf{Proposition}}
\theoremstyle{remark}
\newtheorem{remark}[theorem]{\textbf{Remark}}
\numberwithin{equation}{section}
\title{\bf Long time behavior of the field-road diffusion model: an entropy method and a finite volume scheme}
\date{}
\begin{document}
\maketitle

\begin{center}
{\large \bf Matthieu Alfaro\footnote{Univ. Rouen Normandie, CNRS, LMRS UMR 6085, F-76000 Rouen, France.} and Claire Chainais-Hillairet\footnote{Univ. Lille, CNRS, Inria, UMR 8524 - Laboratoire Paul Painlevé, F-59000 Lille, France.}}
\end{center}

\vspace{15pt}
\tableofcontents

\vspace{10pt}
\begin{abstract} 
We consider the so-called  field-road  diffusion model in a bounded domain, consisting of  two parabolic PDEs posed on sets of different dimensions (a {\it  field} and a {\it road} in a population dynamics context) and coupled through exchange terms on the road, which makes its analysis quite involved. We propose a TPFA finite volume scheme. In both the continuous and  the discrete settings, we prove the
exponential decay of an entropy, and thus the long time  convergence to  the stationary state selected by the total mass of the initial data.  To deal with the problem of different dimensions, we artificially \lq\lq thicken'' the road and, then, establish a rather unconventional Poincaré-Wirtinger inequality. Numerical simulations confirm and complete the analysis,  and raise new issues.
\\

\noindent{\underline{Key Words:} field-road model, long time behavior, finite-volume method, entropy dissipation, entropy construction method, functional inequalities.}\\

\noindent{\underline{AMS Subject Classifications:}  35K40, 35B40, 65M08, 65M12.}
\end{abstract}

\vspace{3pt}

\section{Introduction}\label{s:intro}

Phenomena of spatial spread are highly relevant to understand biological invasions, spreads of emergent diseases, as well as spatial shifts in distributions in the context of climate change. Let us refer, among many others, to the seminal books by Shigesada and Kawasaki \cite{Shi-Kaw-97}, by  Murray \cite{Murray1, Murray2}. More recently, there has been a growing recognition of the importance of {\it fast diffusion channels} on biological invasions: for instance, an accidental transportation via human activities of some individuals towards northern and eastern France may be the cause of accelerated propagation of the pine
processionary moth \cite{Rob-et-al-12}. In Canada, some GPS data revealed that wolves travel faster along seismic lines (i.e. narrow strips cleared for energy exploration), thus increasing their chances to meet a prey \cite{MacKen-et-al-12}. It is also acknowledged that fast diffusion channels (roads, airlines, etc.) play a central role in the propagation of epidemics. As is well known, the spread of the black plague, which killed about a third of the European population in the 14th century, was favoured by the trade routes, especially the Silk Road, see  \cite{PesteNoireRoutesSoie}. More recently, some evidences of the the radiation of the COVID epidemic along highways and transportation infrastructures were found \cite{Gat-et-al-20}.

The so-called {\em field-road} model was introduced  by Berestycki, Roquejoffre and Rossi \cite{Ber-Roq-Ros-13-1} in order to describe such spread of diseases or invasive species in presence of networks with fast propagation. It is set on an unbounded domain. We will recall it hereafter and review the main established mathematical results. The current work is devoted to the theoretical and numerical analysis of a {\it purely diffusive} field-road model set on a bounded domain. We focus on the analysis of its long time behavior. 

\subsection{The continuous field-road diffusion model}\label{ss:continuous}

The {\em field-road} model introduced by Berestycki, Roquejoffre and Rossi \cite{Ber-Roq-Ros-13-1} writes as
\begin{equation}\label{syst-BRR}
\left\{
\begin{array}{ll}
	\partial_{t} v = d \Delta v + f(v),
&\quad t>0, \; x \in \R^{N-1}, \; y>0, \vspace{4pt}\\
	- d\, \partial_{y} v|_{y=0} = \mu u - \nu v|_{y=0},
&\quad t>0, \; x \in \R^{N-1}, \vspace{4pt}\\
	\partial_{t} u = D \Delta u + \nu v|_{y=0} - \mu u,
&\quad t>0, \; x \in \R^{N-1}.
\end{array}
\right.
\end{equation}
 The mathematical problem then amounts to describing survival and propagation in a non-standard physical space: the geographical domain consists in the half-space (the \lq\lq field'') $x\in \R^{N-1}$, $y>0$, bordered by the hyperplane (the \lq\lq road'') $x\in \R^{N-1}$, $y=0$. In the field, individuals diffuse with coefficient $d>0$ and their density is given  by $v=v(t,x,y)$. In particular $\Delta v$ has to be understood as $\Delta_x v+\partial_{yy}v$. On the road, individuals typically diffuse faster ($D>d$) and their density is given by $u=u(t,x)$. In particular $\Delta u$ has to be understood as $\Delta_x u$. The exchanges of population between the road and the field are described by the second equation in system \eqref{syst-BRR}, where $\mu>0$ and $\nu >0$. These boundary conditions, and the zeroth-order term on the road, link the field and the road equations and are the core of the model. 
 
In a series of works \cite{Ber-Roq-Ros-13-2, Ber-Roq-Ros-13-1, Ber-Roq-Ros-shape, Ber-Roq-Ros-tw},  Berestycki, Roquejoffre and Rossi studied the field-road system with $N=2$ and $f$ a Fisher-KPP nonlinearity. They shed light on an {\it acceleration phenomenon}: when $D>2d$, the road  enhances the global diffusion and  the spreading speed exceeds the standard Fisher-KPP invasion speed. This new feature has stimulated many
works and, since then, many related problems taking into account heterogeneities, more complex geometries, nonlocal diffusions, etc. have been studied  \cite{AC1, AC2}, \cite{Gil-Mon-Zho-15}, 
  \cite{PauthierLongRangeExchanges1, PauthierLongRangeExchanges2, PauthierLongRangeExchanges3}, \cite{Tel-16}, \cite{Ros-Tel-Val-17}, \cite{Duc-18}, \cite{BDR1, Ber-Duc-Ros-20}, \cite{Zha-21}, \cite{Bog-Gil-Tel-21}, \cite{Aff-22}. 

Very recently, the authors in \cite{Alf-Duc-Tre-23} considered the {\it purely diffusive} field-road system --- obtained by letting $f\equiv 0$ in \eqref{syst-BRR} --- as a starting point. They obtained  an explicit expression for both the fundamental solution and the solution to the associated Cauchy problem, and a
 sharp (possibly up to a logarithmic term) decay rate of the $L^\infty$ norm of the solution.

From now on, we consider the purely diffusive field-road model on a bounded domain, namely $\Omega\subset \R^N$ ($N\geq 2$)  a bounded cylinder of the form
$$
\Omega=\omega \times (0,L), \quad \omega \text{ a bounded convex and open set of } \R^{N-1},\; L>0.
$$
We still denote by  $v=v(t,x,y)$ and $ u=u(t,x)$ the densities of species respectively in the field and on the road. They are smooth solutions to the system 
\begin{equation}\label{syst}
\left\{
\begin{array}{ll}
	\partial_{t} v = d \Delta v,
&\quad t>0, \; x \in \omega, \; y\in(0,L), \vspace{5pt}\\
	- d\, \partial_{y} v|_{y=0}= \mu u - \nu v|_{y=0},
&\quad t>0, \; x \in \omega,\vspace{5pt}\\
	\partial_{t} u = D \Delta u + \nu v|_{y=0} - \mu u,
&\quad t>0, \; x \in \omega, \vspace{5pt}\\
 \frac{\partial u}{\partial n'} = 0,
&\quad t>0, \; x \in \partial \omega,\vspace{5pt}\\
 \frac{\partial v}{\partial n} = 0,
&\quad t>0, \; x \in \partial \omega, \;y\in(0,L), \text{ and } \; x\in \omega, \; y=L,\\
\end{array}
\right.
\end{equation}
 supplemented with an initial condition $(v_0,u_0)\in \Linf(\Omega)\times \Linf(\omega)$.
As in the classical model,  $d$ and  $D$ are positive diffusion coefficients, while  $\mu$ and $\nu$ are positive transfer coefficients.  For $u$ we impose the zero Neumann boundary conditions on the boundary $\partial \omega$ ($n'$ denotes the unit outward normal vector to $\partial \omega$). For $v$, we impose the zero Neumann boundary conditions on the lateral boundary $\partial \omega \times (0,L)$ and on the upper boundary $\omega \times \{L\}$ ($n$ denotes the unit outward normal vector to $\partial \Omega$). On the lower boundary $\omega \times \{0\}$, we impose the aforementioned boundary conditions linking $v$ and $u$, which is the essence of the model.

As the system \eqref{syst} is made of two diffusive equations coupled through the transfer terms, we expect convergence towards a steady-state in long time. This convergence result comes from the dissipative structure of the model. Moreover, we aim at designing a numerical scheme for \eqref{syst} that preserves such a  dissipative structure. 

\subsection{The TPFA finite volume scheme}\label{ss:TPFA}
 
In system \eqref{syst}, the diffusion processes on the road and in the field are obviously isotropic and homogeneous. Moreover, 
 we can consider \lq\lq nice'' geometries for the road and the field, so that the construction of  meshes for the domains is not a challenge (in many cases, cartesian grids would be sufficient). Therefore, 
 Two-Point Flux Approximation Finite Volume schemes seem to be adapted for the discretization of \eqref{syst}. We refer to the book by Eymard, Gallouët and Herbin \cite{Eym-Gal-Her-00}, and to references therein, for a detailed presentation of finite volume methods. In many different frameworks, these methods have proved to be well-adapted for the preservation of long time behavior of diffusive problems, see for instance \cite{Cha-Jun-Sch-16}, \cite{Cha-Her-20}.
 
In order to write a numerical scheme for the field-road model, we have to define two meshes, one for the field and one for the road, with a compatibility relation between both meshes.   This is a crucial  point to treat correctly the exchanges between the field and the road. We emphasize that the design of the scheme is driven by the will to preserve the main features of the model (mass conservation, positivity of the densities, steady-states, long-time behavior, etc.). 

\medskip
 
\noindent {\bf Meshes and notations.} Let us first consider a mesh $\M_{\Omega}$ of the field $\Omega$ made of a family of control volumes $\T_\Omega$, a family of faces (or edges) $\E_\Omega$ and a family of points $\P_\Omega$, so that $\M_{\Omega}=(\T_{\Omega}, \E_{\Omega},\P_{\Omega})$. The mesh of $\omega$ is also made of a family of control volumes, a family of edges and a family of points. It is denoted $\M_{\omega}=(\T_{\omega}, \E_{\omega},\P_{\omega})$.  We use classical notations: 
\begin{itemize}
\item $K\in\T_{\Omega}$ for a control volume, $\sigma\in\E_{\Omega}$ for an edge, $x_K\in{\mathcal P}_{\Omega}$ for an interior  point of $K$ (named as the center of $K$), 
\item  $K^*\in\T_{\omega}$ for a control volume, $\sigma^*\in\E_{\omega}$ for an edge (it can be a point when $N=2$), $x_{K^*}\in{\mathcal P}_{\omega}$ for an interior point of $K^*$.
\end{itemize}

In  $\T_{\Omega}$, we can distinguish the control volumes that have an edge on the road from the ones that are strictly included in the field, which writes $\T_{\Omega}= \T_{\Omega}^r\cup \T_{\Omega}^f$. For the edges of $\E_{\Omega}$ we can also distinguish the interior edges from the boundary edges, included in $\omega$ or included in $\partial \Omega\setminus\omega$ (considered as exterior edges), which writes  $\E_{\Omega}=\E_{\Omega}^{\rm int}\cup \E_\Omega^r\cup \E_\Omega^{\rm ext} $. For an interior edge $\sigma \in \E_{\Omega}^{\rm int}$, we may write $\sigma =K|L$ as it is an edge between the control volumes $K$ and $L$. Similarly, we can  split $\E_\omega$ into $\E_\omega= \E_\omega^{\rm int}\cup \E_\omega^{\rm ext}$ and denote each interior edge $\sigma^*\in \E_\omega^{\rm int}$ as $\sigma^*=K^*|L^*$. The main notations are illustrated on Figure~\ref{fig:mesh} in a two-dimensional case.

We assume that both meshes are admissible in the sense that they satisfy the usual orthogonality property, see \cite{Eym-Gal-Her-00}. 
This means that for each edge $\sigma= K|L$ (respectively $\sigma^*=K^*|L^*$), the line joining $x_K$ to $x_L$ (respectively $x_{K^*}$ to $x_{L^*}$) is perpendicular to $\sigma$ (respectively $\sigma^*$).  
Moreover, we assume the compatibility of the two meshes $\M_{\Omega}$ and $\M_{\omega}$: every control volume of $\T_{\omega}$ must coincide with an edge of $\E_\Omega^r$. More precisely, for all $\sigma\in\E_{\Omega}^r$, there exists a unique $K\in\T_{\Omega}^r$ such that $\sigma$ is an edge of $K$ and a unique $K^*\in\T_{\omega}$ such that $\sigma$ coincides with $K^*$. Therefore, we will use the notation $\sigma=K|K^*$ for $\sigma\in \E_{\Omega}^r$. 

\begin{figure}[htb]\begin{tabular}{ccc}\label{fig:mesh}
 \begin{tikzpicture}[scale=0.41]
 \node[above] at (17,10){$\Omega$};
\foreach \x in {10,20}{
 	\draw[line width=.8pt]  (\x+0,0)--(\x+0,10)--(\x+10,10)--(\x+10,0)--cycle;
	\draw[line width=.8pt]  (\x+0,0)--(\x+3,3)--(\x+5,0)--(\x+6.5,3.5)--(\x+10,0);
          \draw[line width=.8pt] (\x+0,5)--(\x+3,3)--(\x+6.5,3.5)--(\x+10,5)--(\x+7,7)--(\x+6.5,3.5)--(\x+3.5,6.5)--(\x+3,3);
          \draw[line width=.8pt] (\x+0,5)--(\x+3.5,6.5)--(\x+7,7)--(\x+10,10);
          \draw[line width=.8pt] (\x+0,10)--(\x+3.5,6.5)--(\x+5,10)--(\x+7,7);
          \filldraw [fill=red!30!white] (\x+0,0) -- (\x+3,3)--(\x+5,0)--(\x+6.5,3.5)--(\x+10,0)-- cycle;

          \draw[line width= 1.2pt, color = red] (\x+0,0)--(\x+10,0);
           	}
\filldraw[fill = blue!20!white] (23,3)--(23.5,6.5)--(26.5,3.5)--cycle;
\filldraw[fill = blue!20!white] (23.5,6.5)--(26.5,3.5)-- (27,7)--cycle;
\node[below] at (27,-0.2) {\color{red}$\omega$};
\node at (17,-1.5) {\phantom{$\Omega $}};
\end{tikzpicture}
&\ &
\vspace*{.2cm}
\begin{tikzpicture}[scale=0.65]
\foreach \y in {-0.2}{
\filldraw[fill = blue!20!white] (3,3+\y)--(3.5,6.5+\y)--(6.5,3.5+\y)--cycle;
\filldraw[fill = blue!20!white] (3.5,6.5+\y)--(6.5,3.5+\y)-- (7,7+\y)--cycle;
\draw[line width=1.2pt, color=blue] (3.5,6.5+\y)--(6.5,3.5+\y);
\node[above,right] at (3,3.5+\y){$K$};
\node[left,below] at (6.5,6.9+\y){$L$};
\draw[dashed, line width= 1.2pt] (4.2,\y+4.2)--(5.7,\y+5.7);
\node at (4.27,\y+4.27){$\bullet$};
\node[above] at (4.2,\y+4.4){$x_K$};
\node at (5.7,\y+5.7){$\bullet$};
\node[below] at  (5.9,\y+5.6){$x_L$};
\draw[line width = 0.8pt, <-] (6.2,\y+4) arc (165:90:1.3cm);
\node  at (9.5,\y+5){{\color{blue} $\sigma$} $ =K|L\in {\mathcal E}_\Omega^{\rm int}$};
\node  at (9.,\y+6){$K, L \in {\mathcal T}_\Omega^f$};
}
 \filldraw [fill=red!30!white] (3,-1) -- (6,2)--(8,-1)--cycle;
  \draw[line width = 0.8pt, <-] (7.2,-.9) arc (165:110:1.5cm);
 \node at (5.85,1.3){$K$};
 \node at (5.5,-1){$\bullet$};
 \node[below] at (5.5,-1){$x_{K^*}$};
 \node at (5.5,.3){$\bullet$};
 \node[right] at (5.5,.3){$x_K$};
 \draw[dashed, line width= 1.2pt] (5.5,-1)--(5.5,.3);
 \draw[line width = 1.2pt, color = red] (3,-1)--(8,-1);
\node[right] at (8.2,0.5){{\color{red} $K^*$} $ \in{\mathcal T}_\omega$};
\node[right] at (8.2, 1.2){$K\in {\mathcal T}_\Omega^r$};
\node[right] at (8.2, -0.3){{\color{red}{$\sigma$}} $= K|K^*\in {\mathcal E}_\Omega^r$};
\end{tikzpicture}
\end{tabular}
\caption {Presentation of the meshes and the associated notation in a 2D case for a rectangular field $\Omega$ and the unidimensional road $\omega$. The mesh of $\Omega$ is an admissible triangular mesh $\T_\Omega$. The control volumes in blue and white belong to $\T_{\Omega}^f $ while the control volumes in red belong to $\T_{\Omega}^r $. The control volumes of $\omega$, $K^*\in\T_\omega$, can also be considered as edges of $\E_\Omega^r$.}\end{figure}
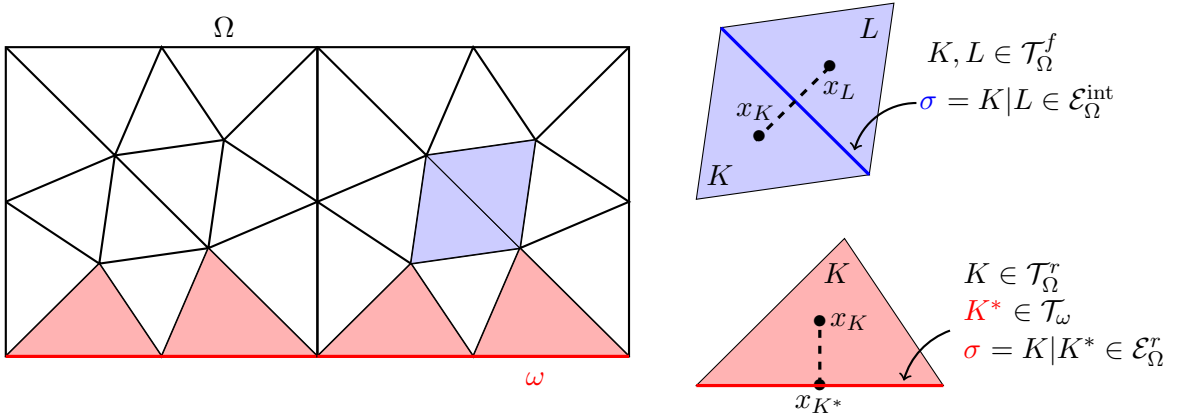

The measures of control volumes or edges are denoted by $m_{K}$, $m_{K^*}$, 
$m_\sigma$, $m_{\sigma^*}$ (which is set equal to 1 if the road has dimension 1). We also define by $d_{\sigma}$ or $d_{\sigma^*}$ the distance associated to an edge $\sigma\in \E_\Omega$ or $\sigma^*\in \E_{\omega}$, usually defined as the distance between the centers of two neighboring cells (or the distance from the center to the boundary), so that the transmissivities are defined by
$$
\tau_\sigma:=\ds\frac{m_\sigma}{d_\sigma}  \, \text{ for any } \sigma \in \E_\Omega,\quad 
\tau_{\sigma^*}:=\ds\frac{m_{\sigma^*}}{d_{\sigma^*}} \, \text{ for any } \sigma^* \in \E_\omega.
$$

Last, in view of time discretization, we consider a time step $\delta t>0$.

\medskip

\noindent{\bf The scheme.} Let us denote by $((v_K^n)_{K\in\T_{\Omega}, n\geq 0}, (\vKe^n)_{K^*\in\T_{\omega}, n\geq 1}, (\uKe^n)_{K^*\in\T_{\omega}, n\geq 0})$ the discrete unknowns. 
We start with the discretization of the initial conditions by letting
\begin{equation}\label{init_scheme}
v_K^0=\frac{1}{m_K}\int_K v_0(x,y) dx dy, \ \forall K\in\TO\ \mbox{ and }\  u_{K^*}^0=\frac{1}{m_{K^*}}\int_{K^*} u_0(x) dx,  \ \forall K^*\in \T_\omega.
\end{equation}


The scheme we propose is a backward Euler scheme in time and a TPFA finite volume scheme in space. It writes as 
\begin{subequations}\label{scheme}
\begin{align}
&	m_K\ds\frac{v_K^n-v_K^{n-1}}{\delta t} + d\ds\sum_{\sigma=K|L}\tau_\sigma (v_K^n-v_L^n)+ d\ds\sum_{\sigma=K|K^*}\tau_\sigma(v_K^n-\vKe^n)=0,\ \forall  K\in\T_{\Omega},\label{scheme.v}\\
&	-d\tau_\sigma(v_K^n-\vKe^n)=m_{K^*}(\mu \uKe^n-\nu \vKe^n),\ \forall \sigma\in \E_{\Omega}^{r}, \sigma =K|K^*,\label{scheme.interf}\\
&m_{K^*}\ds\frac{\uKe^n-\uKe^{n-1}}{\delta t}+D \ds\sum_{{\sigma^*}=K^*|L^*}\tau_{\sigma^*}(\uKe^n-u_{L^*}^n)
+m_{K^*}(\mu \uKe^n-\nu \vKe^n)=0,\ \forall K^*\in\T_{\omega}.\label{scheme.u}
\end{align}
\end{subequations}
At each time step, the scheme consists in a square linear system of equations of size $\# \T_{\Omega}+2\# \T_{\omega}$.

%

\subsection{Main results and organization of the paper}\label{ss:results}

In the present work we thus consider the field-road diffusion model in a bounded domain and study its convergence, at large times, to the steady-state selected by the initial data, in both the continuous \eqref{syst} and the discrete \eqref{scheme} setting. To do so, we  prove exponential decay of an entropy, see Theorem 
\ref{th:H2-decroit} and Theorem \ref{th:H2-decroit-dis}. Classically, this requires  to relate {\it dissipation} to the entropy via some functional inequalities.  However, the originality 
of this work comes from the difference of dimension between the field and the road and the exchange terms between both. In particular some refinements of Poincar\'e-Wirtinger inequality are required for the analysis, see Theorem \ref{th:unconv-PW} and Theorem \ref{th:unconv-PW-dis}.

\medskip

The paper is organized as follows. In Section \ref{s:continuous} we study the long time behavior of the  continuous model \eqref{syst}. In Section \ref{s:discrete} we study the long time behavior of the TPFA scheme \eqref{scheme}. Last, in Section \ref{s:num}, we perform some numerical simulations, that not only confirm the theoretical results but also raise new issues.

\section{Long time behavior of the continuous model}\label{s:continuous}

In this section, we consider  $v_0\in L^\infty(\Omega)$, $u_0\in L^\infty(\omega)$, both nonnegative and not simultaneously trivial. As a result, the total mass is initially positive
\begin{equation*}
\m:=\int _\Omega v_0(x,y)\,dxdy+\int_\omega u_0(x)\,dx >0.
\end{equation*}
Through this section, we denote $(v=v(t,x,y), u=u(t,x))$ the smooth  solution  of \eqref{syst} starting from $(v_0=v_0(x,y), u_0=u_0(x))$. 

\subsection{Mass conservation, positivity and steady-states}\label{ss:first prop}

First of all, it follows from the strong maximum principle, see \cite[Proposition 3.2]{Ber-Roq-Ros-13-1}, that both $v(t,x,y)$ and $u(t,x)$ are positive as soon as $t>0$.

Next, let us consider two test functions: $\varphi\in C^1(\R\times{\bar\Omega},\R)$ and $\psi\in C^1(\R\times\bar \omega,\R)$. We multiply the equation on $v$ in \eqref{syst} by $\varphi$ and the equation on $u$ by $\psi$ and we integrate respectively over $\Omega$ an $\omega$. After some integrations by parts, we obtain, due to the boundary conditions,
\begin{multline}\label{weak_form}
\int_\Omega \pa_t v\varphi (t,x,y) \,dx dy + \int_\omega \pa_t u \psi(t,x) \,dx=-d\int_\Omega \nabla v\cdot\nabla \varphi (t,x,y) \, dxdy \\
-D \int_\omega \nabla u\cdot \nabla \psi(t,x) \,dx -\int_\omega (\nu v(t,x,0)-\mu u(t,x))(\varphi(t,x,0)-\psi(t,x)) \,dx.\end{multline}
We emphasize that, in \eqref{weak_form} $\nabla v$ stands for $\nabla_{x,y} v$ while $\nabla u$ stands for $\nabla_x u$. 
In the sequel, we often omit the variables $t$, $x$ and $y$ in the integrands, when using \eqref{weak_form} or similar relations. When $v$ (or its derivatives) appears in an integrand over $\omega$, this obviously means $v|_{y=0}$.

Choosing constant functions equal to 1 over $\R\times{\bar\Omega}$ and $\R\times\bar \omega$ for $\varphi$ and $\psi$ in \eqref{weak_form}, we obtain that the total mass of the system  $\int_\Omega v(t,x,y)\,dxdy+\int_\omega u(t,x)\,dx$ is constant, namely
\begin{equation*}
\int_\Omega v(t,x,y)\,dxdy+\int_\omega u(t,x)\,dx=M_0, \quad \forall t>0.
\end{equation*}

Let us now investigate the existence of steady-states $(v=v(x,y), u= u(x))$ to \eqref{syst}. Using $\varphi=\nu v$ and $\psi=\mu u$ in \eqref{weak_form}, we obtain that 
$$
\int_\Omega |\nabla v|^2 \,dx dy= \int_\omega |\nabla u|^2 \,dx =\int_\omega (\nu v(\cdot,0)-\mu u(\cdot))^2 \,dx = 0,
$$
so that $v$ and $u$ must be constant in space and verify $\nu v-\mu u=0$. The system \eqref{syst} has thus an infinity of steady-states, but only one with the prescribed mass $\m$. The constant steady-state $(\vinf,\uinf)$ with mass $\m$ (and therefore associated to the initial state $(v_0,u_0)$) is given by
\begin{equation}\label{eq-pour-steady}
\nu \vinf-\mu \uinf=0, \quad \mO \vinf+\mo \uinf=\m,
\end{equation}
that is
\begin{equation}
\label{def-steady-cst}
\vinf=\frac{\mu}{\mo\nu + \mO\mu}\m, \quad \uinf=\frac{\nu}{\mo\nu +\mO\mu}\m.
\end{equation}
The positivity of $M_0$ implies the positivity of $\vinf$ and $\uinf$.

\subsection{Exponential decay of relative entropy}\label{ss:entropy}

Our aim is now to establish that  $(v=v(t,x,y), u=u(t,x))$, the smooth  solution  of \eqref{syst} starting from $(v_0=v_0(x,y), u_0=u_0(x))$ with an initial total mass $\m$, converges in large times towards the associated steady-state  $(\vinf,\uinf)$ defined by \eqref{def-steady-cst}. To do so, we apply a relative entropy method as presented for instance in the book by J\"ungel \cite{Jungel_2016}. 

For any twice differentiable function $\Phi:[0,+\infty)\to [0,+\infty)$ such that
$$
\Phi''>0, \quad \Phi'(1)=0, \quad \Phi(1)=0,
$$
we define an entropy, relative to the steady-state $(\vinf,\uinf)$, by
\begin{equation}
\label{def-H}
\H(t):=\int _\Omega \vinf \Phi\left(\frac{v(t,x,y)}{\vinf}\right)\,dxdy+\int_\omega \uinf \Phi \left(\frac{u(t,x)}{\uinf}\right)\,dx,
\end{equation}
which is obviously nonnegative and vanishes at time $t$ if and only if $v(t,\cdot,\cdot)\equiv \vinf$ and $u(t,\cdot)\equiv \uinf$.

Our first result states that the entropy is dissipated by the field-road model.

\begin{proposition}[Entropy dissipation]\label{prop:dissipation} 
Let $v_0\in L^\infty(\Omega)$ and $u_0\in L^\infty(\omega)$ be both nonnegative and satisfying $\m >0$. Let $(v=v(t,x,y), u=u(t,x))$ be the  solution  to \eqref{syst} starting from $(v_0=v_0(x,y), u_0=u_0(x))$, and $(\vinf,\uinf)$ the associated steady-state defined by \eqref{def-steady-cst}. Then the entropy defined by \eqref{def-H} is dissipated along time, namely
\begin{equation}\label{diss-entropy}
\frac{d}{dt}\H(t)=-\D(t)\leq 0, \quad \forall t>0,
\end{equation}
where 
\begin{multline}\label{def-Dissipation}
\D(t):=d \int_\Omega \frac{\vert \nabla v\vert ^2}{\vinf}\Phi''\left(\frac{v}{\vinf}\right)\,dxdy+D \int_\omega \frac{\vert \nabla u\vert ^2}{\uinf}\, \Phi''\left(\frac{u}{\uinf}\right) \,dx\\
\qquad +\mu \uinf\int _\omega \left(\Phi'\left(\frac{v}{\vinf}\right)-\Phi'\left(\frac{u}{\uinf}\right)\right)\left(\frac v{\vinf}-\frac u{\uinf}\right)\, dx
\end{multline}
is the so-called dissipation.
\end{proposition}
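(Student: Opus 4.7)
The plan is to compute $\frac{d}{dt}\H(t)$ by differentiating under the integral sign (which is legitimate since the solution is smooth) and then plug the result into the weak formulation \eqref{weak_form} with well chosen test functions. Concretely, at a fixed time $t$ I would use
\[
\varphi(x,y) = \Phi'\!\left(\frac{v(t,x,y)}{\vinf}\right), \qquad \psi(x) = \Phi'\!\left(\frac{u(t,x)}{\uinf}\right),
\]
so that $\nabla \varphi = \frac{1}{\vinf}\Phi''(v/\vinf)\nabla v$ and $\nabla \psi = \frac{1}{\uinf}\Phi''(u/\uinf)\nabla u$. Pairing $\partial_t v$ with $\varphi$ and $\partial_t u$ with $\psi$ gives exactly $\frac{d}{dt}\H(t)$, since $\vinf$ and $\uinf$ are constants and $\Phi'(v/\vinf)\,\partial_t v = \vinf\,\partial_t[\Phi(v/\vinf)]$.

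The first two terms produced by \eqref{weak_form} immediately yield the first two gradient terms of $\D(t)$, both of which are nonnegative because $\Phi''>0$. The remaining piece is the exchange term
\[
- \int_\omega (\nu v(\cdot,0) - \mu u)\left(\Phi'(v/\vinf) - \Phi'(u/\uinf)\right)\,dx,
\]
and the decisive step is to rewrite its prefactor using the steady-state relation \eqref{eq-pour-steady}, namely $\nu\vinf = \mu\uinf$. This gives
\[
\nu v - \mu u \;=\; \nu\vinf\frac{v}{\vinf} - \mu\uinf\frac{u}{\uinf} \;=\; \mu\uinf\left(\frac{v}{\vinf} - \frac{u}{\uinf}\right),
\]
producing precisely the third term in the definition \eqref{def-Dissipation} of $\D(t)$. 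Its nonnegativity follows from the monotonicity of $\Phi'$ (a consequence of $\Phi''>0$): for any $a,b\ge 0$, $(a-b)(\Phi'(a)-\Phi'(b))\ge 0$. Summing the three contributions delivers \eqref{diss-entropy}.

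The argument is essentially an algebraic manipulation, so there is no serious analytic obstacle; the only non-routine point is recognizing that the steady-state identity $\nu\vinf = \mu\uinf$ is exactly what is needed to \emph{symmetrize} the exchange term into a product of a difference and a difference of $\Phi'$ evaluated at the same two arguments. Without this symmetrization, the boundary contribution would not have a definite sign, and the decay structure of the model would be invisible at the entropy level. This observation will also be the template for the discrete analogue (Theorem \ref{th:H2-decroit-dis}), so it is worth highlighting in the continuous proof.
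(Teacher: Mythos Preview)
Your proposal is correct and follows essentially the same route as the paper: differentiate $\H(t)$, apply the weak formulation \eqref{weak_form} with the test functions $\varphi=\Phi'(v/\vinf)$ and $\psi=\Phi'(u/\uinf)$, and then use the steady-state identity $\nu\vinf=\mu\uinf$ from \eqref{eq-pour-steady} to recast the exchange term into the symmetric form appearing in \eqref{def-Dissipation}. The paper's proof is slightly terser but the logic is identical.
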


\begin{proof} 
The derivative of the entropy function $\H$ is given by 
$$
\frac{d}{dt} \H(t)= \int_\Omega \pa_t v \Phi'\left(\frac{v}{\vinf}\right) \, dxdy+ \int_\omega \pa_t u \Phi'\left(\frac{u}{\uinf}\right)\, dx.
$$
Therefore, we apply \eqref{weak_form} with $\varphi= \Phi'(\frac{v}{\vinf})$ and $\psi= \Phi'(\frac{u}{\uinf})$. 
Since $\nabla \varphi=\frac{1}{\vinf} \Phi''\left(\frac{v}{\vinf}\right)\nabla v$ and $\nabla \psi=\frac{1}{\uinf} \Phi''\left(\frac{u}{\uinf}\right)\nabla u$, we obtain 
\begin{multline*}
\frac{d}{dt} \H(t)=-d \int_\Omega \frac{\vert \nabla v\vert ^2}{\vinf}\, \Phi''\left(\frac{v}{\vinf}\right) \,dxdy
-D\int_\omega \frac{\vert \nabla u\vert ^2}{\uinf}\, \Phi''\left(\frac{u}{\uinf}\right) \,dx\\ -
\int_\omega (\nu v(t,x,0)-\mu u(t,x))\left(\Phi'\left(\frac{v(t,x,0)}{\vinf}\right)-\Phi'\left(\frac{u(t,x)}{\uinf}\right)\right)\,dx.
\end{multline*}
Using \eqref{eq-pour-steady}, this is recast
\begin{multline*}
\frac{d}{dt} \H(t)=-d \int_\Omega \frac{\vert \nabla v\vert ^2}{\vinf}\, \Phi''\left(\frac{v}{\vinf}\right) \,dxdy
-D\int_\omega \frac{\vert \nabla u\vert ^2}{\uinf}\, \Phi''\left(\frac{u}{\uinf}\right) \,dx\\ -
\mu u_{\infty} \int_{\omega}\left(\frac{v(t,x,0)}{\vinf}-\frac{u(t,x)}{\uinf}\right)\left(\Phi'\left(\frac{v(t,x,0)}{\vinf}\right)-\Phi'\left(\frac{u(t,x)}{\uinf}\right)\right)\,dx,
\end{multline*}
which concludes the proof.
%
\end{proof}

Proposition \ref{prop:dissipation} ensures that any relative entropy $\H$ is nonincreasing in time, while being nonnegative. In order to obtain the exponential decay in time of the entropy, we expect a  relation between the entropy and the dissipation of the form $\D\geq \Lambda \H$ for a given $\Lambda>0$. Indeed, combined with \eqref{diss-entropy}, this would ensure $\H(t)\leq \H(0)\exp(-\Lambda t)$.

We specify now the choice of the $\Phi$ function and therefore the entropy. We select 
\begin{equation}\label{phi-2}
\Phi(s)=\Phi_2(s):=\frac 12 (s-1)^2, 
\end{equation}
so that
\begin{equation}
\label{H-2}
\H_2(t)=\frac 12 \int_\Omega \frac{(v-\vinf)^2}{\vinf}\,dxdy+\frac 12 \int_\omega \frac{(u-\uinf)^2}{\uinf}\,dx,
\end{equation}
and
\begin{equation}\label{D-2}
\D_2(t)= d \int_\Omega \frac{\vert \nabla v\vert ^2}{\vinf } \,dxdy+ D \int_\omega \frac{\vert \nabla u\vert ^2}{\uinf } \,dx+\mu \uinf \int _\omega \left(\frac{v}{\vinf}-\frac{u}{\uinf}\right)^2\,dx.
\end{equation}

Theorem~\ref{th:unconv-PW} states the expected relation between the entropy $\H_2$ and its dissipation $\D_2$. Its proof will be given in the next subsection. As we will see, it is based on the proof of the Poincar\'e-Wirtinger inequality, while not being the combination of the classical Poincar\'e-Wirtinger inequality applied to $v$ and to $u$.

\begin{theorem}[Relating entropy and dissipation]
\label{th:unconv-PW} Let $v_0\in L^\infty(\Omega)$ and $u_0\in L^\infty(\omega)$ be both nonnegative and satisfying $M_0>0$. Let $(v=v(t,x,y), u=u(t,x))$ be the  solution  to \eqref{syst} starting from $(v_0=v_0(x,y), u_0=u_0(x))$,  and $(\vinf,\uinf)$ the associated steady-state defined by \eqref{def-steady-cst}. Then, for any $t>0$ (that we omit to write below),  there holds
\begin{eqnarray}
&& \frac 12 \int_\Omega \frac{(v-\vinf)^2}{\vinf}\,dxdy+\frac 12 \int_\omega \frac{(u-\uinf)^2}{\uinf}\,dx \nonumber \\
&&\qquad \leq \frac 1{\Lambda _2} \left(d \int_\Omega \frac{\vert \nabla v\vert ^2}{\vinf } \,dxdy+ D \int_\omega \frac{\vert \nabla u\vert ^2}{\uinf } \,dx+\mu \uinf \int _\omega \left(\frac{v}{\vinf}-\frac{u}{\uinf}\right)^2\,dx\right),\label{PW}
\end{eqnarray}
for some positive constant $\Lambda_2$ depending on the dimension $N$, the domain $\Omega$ (including $\omega$ and $L$),  the transfer rates $\mu$, $\nu$, and the diffusion coefficients $d$, $D$, see \eqref{Lambda-2} for further details.
\end{theorem}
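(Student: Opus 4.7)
The plan is to compare the relative $L^2$ entropy $\H_2$ to the dissipation $\D_2$ by splitting each squared deviation into a fluctuation about its mean plus the distance of that mean from equilibrium, applying the classical Poincaré--Wirtinger inequality to the fluctuations on the convex domains $\Omega$ and $\omega$, and then controlling the remaining mean-from-equilibrium terms using mass conservation, the exchange term in $\D_2$, and a non-standard inequality bridging the mean of $v$ over $\Omega$ and the mean of its trace on $\omega\times\{0\}$. Writing $\bar v:=\frac{1}{m_\Omega}\int_\Omega v$, $\bar u:=\frac{1}{m_\omega}\int_\omega u$, $\alpha:=\bar v-\vinf$, $\beta:=\bar u-\uinf$, orthogonality of means and fluctuations gives the identity $\int_\Omega(v-\vinf)^2 = \int_\Omega(v-\bar v)^2 + m_\Omega\alpha^2$, and its analogue for $u$. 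The Payne--Weinberger inequality (Poincaré--Wirtinger on convex domains) bounds the first summands by $\int_\Omega|\nabla v|^2$ and $\int_\omega|\nabla u|^2$, hence by the corresponding gradient parts of $\D_2$. Conservation of mass yields $m_\Omega\alpha + m_\omega\beta = 0$, so $\beta^2$ is controlled as soon as $\alpha^2$ is.

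The heart of the argument is bounding $\alpha$ by $\D_2$. Introducing the trace mean $\bar v_0:=\frac{1}{m_\omega}\int_\omega v(x,0)\,dx$ and using the equilibrium relation $\nu\vinf=\mu\uinf$ from \eqref{eq-pour-steady}, a direct computation yields the identity $\alpha\,\frac{\nu m_\omega+\mu m_\Omega}{m_\omega} = (\nu\bar v_0-\mu\bar u) - \nu(\bar v_0-\bar v)$. The first piece is handled by the exchange term: from $\nu v|_{y=0}-\mu u = \mu\uinf\bigl(\tfrac{v}{\vinf}-\tfrac{u}{\uinf}\bigr)$ and $m_\omega(\nu\bar v_0-\mu\bar u) = \int_\omega(\nu v|_{y=0}-\mu u)\,dx$, Cauchy--Schwarz gives $(\nu\bar v_0-\mu\bar u)^2 \le \frac{(\mu\uinf)^2}{m_\omega}\int_\omega\bigl(\tfrac{v}{\vinf}-\tfrac{u}{\uinf}\bigr)^2 dx$. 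The second piece is the unconventional ingredient: exploiting the cylindrical structure $\Omega=\omega\times(0,L)$, one applies a one-dimensional Poincaré estimate to $y\mapsto v(x,y)-v(x,0)$ on $(0,L)$, then integrates in $x\in\omega$ and uses Cauchy--Schwarz twice to obtain the bridging bound $(\bar v_0-\bar v)^2 \le \frac{L}{2m_\omega}\int_\Omega|\partial_y v|^2\,dxdy$, which is controlled by the $d\int_\Omega|\nabla v|^2/\vinf$ part of $\D_2$. Expanding $(a-b)^2\le 2a^2+2b^2$ in the identity above then bounds $\alpha^2$ by an explicit linear combination of the three dissipation terms.

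Assembling the pieces produces the desired estimate with a computable $\Lambda_2$ depending on $d$, $D$, $\mu$, $\nu$, $L$, $m_\omega$, $m_\Omega$, and the Payne--Weinberger constants of $\Omega$ and $\omega$. The main difficulty, and the reason why classical Poincaré--Wirtinger applied independently to $v$ on $\Omega$ and to $u$ on $\omega$ is insufficient, is precisely this trace-versus-interior bridging: the equilibrium $(\vinf,\uinf)$ couples the $\Omega$-mass of $v$ and the $\omega$-mass of $u$ through $\nu\vinf=\mu\uinf$, while the exchange term only sees the difference between the \emph{trace} $v(\cdot,0)$ and $u$; transferring that control back to the interior mean $\bar v$ is what requires the cylindrical geometry (and is morally the ``artificial thickening'' of the road mentioned in the abstract).
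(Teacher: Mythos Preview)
Your argument is correct, but it follows a genuinely different route from the paper's.

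The paper proceeds by \emph{thickening the road}: it enlarges $\Omega$ to $\Omega^+=\omega\times(-\ell,L)$, defines a single function $f$ on $\Omega^+$ (equal to $v/\vinf$ on the field and $u/\uinf$ on the thickened road) and a probability measure $d\rho$, observes that $\H_2=\tfrac{\m}{2}\|f-1\|_{L^2(d\rho)}^2$, and then mimics the \emph{proof} of Poincar\'e--Wirtinger on $\Omega^+$ via the double-integral identity $2\|f-1\|^2=I_{\Omega,\Omega}+2I_{\Omega,\Omega_\ell}+I_{\Omega_\ell,\Omega_\ell}$. The two diagonal terms are handled exactly as in the classical proof; the cross term $I_{\Omega,\Omega_\ell}$ is split by inserting $v(x,0)/\vinf$ and $u(x)/\uinf$, which produces precisely the exchange term, the one-dimensional trace-to-bulk bound, and a multiple of $I_{\Omega_\ell,\Omega_\ell}$.

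Your approach instead stays on the original domains: you split $\int_\Omega(v-\vinf)^2$ (and the $u$ analogue) orthogonally into a fluctuation part and $m_\Omega\alpha^2$, dispose of the fluctuations by the \emph{statement} of Poincar\'e--Wirtinger (Payne--Weinberger), reduce the two mean defects $\alpha,\beta$ to one via mass conservation, and then control $\alpha$ through the clean algebraic identity $\alpha\,(\nu m_\omega+\mu m_\Omega)/m_\omega=(\nu\bar v_0-\mu\bar u)-\nu(\bar v_0-\bar v)$, whose two pieces are bounded respectively by the exchange term and by the same one-dimensional estimate the paper uses.

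The core analytic ingredients coincide (Poincar\'e--Wirtinger on $\Omega$ and $\omega$, the 1D bound $\int_0^L(v(x,y)-v(x,0))^2\,dy\le\tfrac{L^2}{2}\int_0^L|\partial_y v|^2\,dy$, and the exchange term), but the organization differs: the paper's geometric packaging is what transfers verbatim to the discrete setting (Theorem~\ref{th:unconv-PW-dis}), while your mean/fluctuation decomposition is more elementary, avoids the auxiliary $\ell$-parameter entirely, and --- since it invokes Payne--Weinberger rather than the cruder constants $C_N$ of \eqref{C_N} --- can yield a sharper $\Lambda_2$ in the Poincar\'e pieces. Your closing remark conflating your bridging step with the paper's ``thickening'' is a bit off: the thickening is their \emph{alternative} to your identity, not a restatement of it.
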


As  \eqref{PW} means nothing else than $ \D_2(t)\geq {\Lambda _2} \H_2(t)$ for all $t>0$, we deduce from Theorem~\ref{th:unconv-PW} and Proposition~\ref{prop:dissipation}  the exponential decay  of the entropy $\H_2$, as stated in Theorem~\ref{th:H2-decroit}.
\begin{theorem}
[Exponential decay of entropy]\label{th:H2-decroit}  Let $v_0\in L^\infty(\Omega)$ and $u_0\in L^\infty(\omega)$ be both nonnegative and satisfying $\m>0$. Let $(v=v(t,x,y), u=u(t,x))$ be the  solution  to \eqref{syst} starting from $(v_0=v_0(x,y), u_0=u_0(x))$,  and $(\vinf,\uinf)$ the associated steady-state defined by \eqref{def-steady-cst}.  Then the entropy defined by \eqref{H-2} decays exponentially, namely
$$
0\leq \H_2(t)\leq \H_2(0) e^{-\Lambda _2 t}, \quad \forall t\geq 0,
$$
where $\Lambda_2$ comes from Theorem \ref{th:unconv-PW}. 
\end{theorem}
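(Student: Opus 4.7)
The plan is straightforward: the statement is essentially a Grönwall consequence of the two ingredients already available in the paper, namely the entropy dissipation identity of Proposition~\ref{prop:dissipation} and the functional inequality of Theorem~\ref{th:unconv-PW}. No further PDE analysis should be required at this stage, since the hard work (in particular the unconventional Poincaré--Wirtinger estimate hidden behind Theorem~\ref{th:unconv-PW}) has been absorbed into the hypotheses we are allowed to use.

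First, I would specialize Proposition~\ref{prop:dissipation} to the quadratic choice $\Phi=\Phi_2$ defined in \eqref{phi-2}. Since $\Phi_2''\equiv 1$ and $\Phi_2'(s)=s-1$, the general formulas \eqref{def-H} and \eqref{def-Dissipation} collapse to the explicit $\H_2$ and $\D_2$ given in \eqref{H-2} and \eqref{D-2}; note in particular that the cross term $(\Phi_2'(v/\vinf)-\Phi_2'(u/\uinf))(v/\vinf-u/\uinf)$ in \eqref{def-Dissipation} becomes the square $(v/\vinf-u/\uinf)^2$. Proposition~\ref{prop:dissipation} then yields the identity
\begin{equation*}
\frac{d}{dt}\H_2(t)=-\D_2(t),\qquad \forall t>0,
\end{equation*}
which in particular proves nonnegativity of $\H_2(t)$ already from its definition \eqref{H-2}, and monotonicity in time.

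Next, I would invoke Theorem~\ref{th:unconv-PW}, which is precisely the functional inequality $\D_2(t)\geq \Lambda_2\H_2(t)$ for all $t>0$, with $\Lambda_2>0$ as in \eqref{Lambda-2}. Plugging this into the identity above gives the differential inequality
\begin{equation*}
\frac{d}{dt}\H_2(t)\leq -\Lambda_2\,\H_2(t),\qquad \forall t>0.
\end{equation*}
A direct Grönwall argument (multiply by $e^{\Lambda_2 t}$ and integrate on $[0,t]$, or equivalently observe that $t\mapsto e^{\Lambda_2 t}\H_2(t)$ is nonincreasing) then yields the claimed bound
\begin{equation*}
0\leq \H_2(t)\leq \H_2(0)\,e^{-\Lambda_2 t},\qquad \forall t\geq 0,
\end{equation*}
the lower bound being immediate from the definition of $\H_2$. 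The only mild subtlety is to justify that $\H_2$ is indeed differentiable in time with the stated derivative, but this is guaranteed by the smoothness of the solution $(v,u)$ assumed from the beginning of Section~\ref{s:continuous} together with the boundedness/positivity of $(\vinf,\uinf)$, so that all integrands and time-derivatives are well-defined. There is no genuine obstacle in this step; the real difficulty has been deferred to the proof of Theorem~\ref{th:unconv-PW}.
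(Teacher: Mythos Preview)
Your proof is correct and follows exactly the approach of the paper, which simply observes that \eqref{PW} is nothing else than $\D_2(t)\geq \Lambda_2 \H_2(t)$ and deduces the exponential decay from Proposition~\ref{prop:dissipation} via Gr\"onwall. You have merely made the Gr\"onwall step explicit.
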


Due to the definition of $\H_2$, a direct consequence of Theorem  \ref{th:H2-decroit} is the exponential decay of $v$ ({\em resp.} u) towards $\vinf$ ({\em resp.} $ \uinf$) in $L^2(\Omega)$- ({\em resp.}  $L^2(\omega)$-) norm.

\subsection{Relating entropy and dissipation, proof of Theorem~\ref {th:unconv-PW}}

At first glance, the relation between entropy and dissipation \eqref{PW} has similarities with the Poincaré-Wirtinger inequality. However, if we 
 use the Poincaré-Wirtinger inequality twice, once for the term $\int_\Omega \vert \nabla v\vert ^2\,dxdy$ and once for the term $\int_\omega \vert \nabla u\vert ^2 \, dx$, then we fail to reconstruct $\H_2$ as a lower bound for the dissipation $\D_2$. We obviously have to take into account that the quantity that is preserved along time is the total mass $\m$; we also have to manage the fact that $v$ and $u$ are defined on domains with different dimension. Roughly speaking, we will first \lq\lq thicken the road'' from a subset of $\R^{N-1}$ to a subset of $\R^N$ and define an ``enlarged'' domain made of the field and the thickened road. On this enlarged domain, we may define a function based on $v$ on the field and $u$ on the thickened road.  Next, we follow the main steps of the Poincaré-Wirtinger classical inequality hoping that the constant does not blow up as the thickness tends to zero. It turns out that an additional term appears and that it is precisely the non-gradient term in $\D_2$. Finally, \eqref{PW} can be interpreted as a kind of unconventional Poincaré-Wirtinger inequality.
 
We start by recalling the very classical Poincaré-Wirtinger inequality in a bounded convex open set and take the liberty to present briefly the main steps of a possible proof.  
 
 To state this precisely, we define the dimensional constant
\begin{equation}
\label{C_N}
C_d:=\left\{\begin{array}{ll} \ln 2 & \text{ if } d=1,\\
\frac{2^{d-1}-1}{d-1}  & \text{ if } d\geq 2,\\
\end{array}
\right.
\end{equation}
which increases with $d$.

\begin{theorem}[Poincaré-Wirtinger inequality]\label{th:PW} Let $U$ be a bounded convex open set of $\R^d$ ($d\geq 1$). Let $f :U\to \R$ be a given function in $H^1(U)$. Define its mean as
$\moy:=\frac{1}{m_U}\int_ U f\,dx$. Then 
\begin{equation}
\label{ineg-PW}
\Vert f-\moy\Vert _{L^2(U)}^2=\frac 1 {2m_U}\iint  _{U^2} \left(f(x)-f(y)\right)^2\,dxdy \leq C_d\, (\text{Diam } U)^2\,  \int _U \vert \nabla f(z)\vert ^2\,dz,
\end{equation}
with $C_d$ the dimensional constant defined in \eqref{C_N}.
\end{theorem}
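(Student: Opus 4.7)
The proof will split cleanly into two parts: a purely algebraic identity and a gradient estimate. For the identity $\|f - \moy\|_{L^2(U)}^2 = \frac{1}{2 m_U}\iint_{U^2}(f(x)-f(y))^2\,dx\,dy$, I would simply expand the square, use Fubini, and invoke $\int_U f = m_U \moy$; this is a one-line computation that I would dispatch quickly.

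The substantive work is the gradient bound, for which I would argue by density, restricting attention to $f \in C^1(\overline U)$. For any $x,y \in U$, convexity guarantees that the segment $[y,x]$ lies in $U$, so
$$f(x) - f(y) = \int_0^1 \nabla f(y + t(x-y)) \cdot (x - y)\,dt.$$
Cauchy–Schwarz (or Jensen) on the interval $[0,1]$ combined with $|x-y| \leq \mathrm{Diam}\,U$ gives
$$(f(x) - f(y))^2 \leq (\mathrm{Diam}\,U)^2 \int_0^1 |\nabla f(y + t(x-y))|^2\,dt.$$
Integrating over $U \times U$ and swapping the order of integration, the task reduces to estimating, for each $t \in (0,1)$, the quantity $J(t) := \iint_{U^2} |\nabla f(y+t(x-y))|^2\,dx\,dy$.

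The key step is the change of variable $z = (1-t)y + tx$, which by convexity maps $U$ (for either variable with the other fixed) into a subset of $U$. Performed in $x$ with $y$ fixed, it produces a Jacobian factor $t^{-d}$; performed in $y$ with $x$ fixed, a factor $(1-t)^{-d}$. Neither is integrable on the whole $(0,1)$, and this is the main obstacle. I would overcome it by splitting $\int_0^1 dt$ at $1/2$: on $[0,1/2]$ I would integrate in $y$ first, so that the surviving factor $(1-t)^{-d}$ is bounded by $2^d$ on the relevant range; symmetrically, on $[1/2,1]$ I would integrate in $x$ first, controlling $t^{-d}$ by $2^d$. This yields $J(t) \leq m_U (1-t)^{-d} \int_U |\nabla f|^2$ on the first half, and $J(t) \leq m_U \, t^{-d}\int_U |\nabla f|^2$ on the second.

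Summing the two pieces and applying the substitution $s = 1-t$ on the left integral,
$$\int_0^{1/2}(1-t)^{-d}\,dt + \int_{1/2}^1 t^{-d}\,dt = 2 \int_{1/2}^1 t^{-d}\,dt = 2 C_d,$$
where a direct computation identifies the last integral with $C_d$ as defined in \eqref{C_N} (yielding $\ln 2$ for $d=1$ and $(2^{d-1}-1)/(d-1)$ for $d \geq 2$). Multiplied by the factor $\frac{1}{2 m_U}$ coming from the algebraic identity, the $m_U$ cancels and the factor $2$ is absorbed, delivering the exact constant $C_d (\mathrm{Diam}\,U)^2$. The only real subtlety is the interplay between the splitting point $t=1/2$, the symmetric roles of $x$ and $y$ under $t \mapsto 1-t$, and the use of convexity to ensure that the image of the affine map remains inside $U$.
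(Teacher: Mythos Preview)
Your proof is correct and follows essentially the same route as the paper: the algebraic identity is dispatched identically, and the gradient estimate proceeds via the same fundamental-theorem-of-calculus representation along segments, the same Jensen/Cauchy--Schwarz step, the same splitting of $\int_0^1 dt$ at $t=1/2$, and the same change-of-variable argument exploiting convexity to land back in $U$ with Jacobian $(1-t)^{-d}$ or $t^{-d}$. The only cosmetic difference is that you parametrize the segment as $y+t(x-y)$ whereas the paper writes $(1-t)x+ty$, which merely swaps the roles of $x$ and $y$ (equivalently $t\leftrightarrow 1-t$) and leads to the identical constant $2C_d$ before the final division by $2m_U$.
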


\begin{proof} The equality in \eqref{ineg-PW} is classical and can be straightforwardly checked. Next, for sufficiently smooth $f$ (the general case being later obtained by density arguments),
\begin{eqnarray*}
\iint  _{U^2} \left(f(x)-f(y)\right)^2\,dxdy
&=& \iint  _{U^2} \left(\int_0^1 \nabla f\left((1-t)x+ty\right)\cdot (x-y)\,dt\right)^2\,dxdy\\
 & \leq & \iint  _{U^2} \int_0^1 \vert \nabla f\left((1-t)x+ty\right)\cdot (x-y)\vert^2 \,dtdxdy \\
& \leq &   (\text{Diam } U)^2 \iint_{U^2} \int_0^1 \vert \nabla f\left((1-t)x+ty\right)\vert^2 \,dtdxdy .
\end{eqnarray*}
We cut the integral over $t\in(0,1)$ into two pieces and write
\begin{eqnarray*}
\iint_{U^2} \int_0^1 \vert \nabla f\left((1-t)x+ty\right)\vert^2 \,dtdxdy &\leq & \int_{y\in U} \int_0^{1/2} \int _{x\in U} \vert \nabla f\left((1-t)x+ty\right)\vert^2\,dxdtdy\\
&& + \int_{x\in U} \int_{1/2}^1 \int _{y\in U} \vert \nabla f\left((1-t)x+ty\right)\vert^2\,dydtdx
\\
&\leq & \int_{y\in U} \int_0^{1/2} \int _{z \in V_{t,y}} \vert \nabla f(z) \vert^2\,\frac{dz}{(1-t)^d}\,dtdy\\
&& + \int_{x\in U} \int_{1/2}^1 \int _{z\in W_{t,x} } \vert \nabla f (z)\vert^2\,\frac{dz}{t^d}\,dtdx.
\end{eqnarray*}
Since $U$ is convex both domains of integration over $z$, namely $V_{t,y}$ and $W_{t,x}$, are subset of $U$. Since $\int_{1/2}^1 t^{-d}\,dt$ is nothing else than the dimensional constant $C_d$ defined in \eqref{C_N}, we get
$$
\iint_{U^2} \int_0^1 \vert \nabla f\left((1-t)x+ty\right)\vert^2 \,dtdxdy\leq 2\,m_U\, C_d \int_U \vert \nabla f(z)\vert ^2\,dz.
$$
Putting all together, we get \eqref{ineg-PW}.
\end{proof}

Having in mind these classical moves, we now turn to the proof of the unconventional Poincaré-Wirtinger inequality.

\begin{proof}[Proof of Theorem \ref{th:unconv-PW}] For $\ell>0$, we \lq\lq enlarge'' $\Omega=\omega \times (0,L)$ to $\Omega^+=\omega\times (-\ell,L)$. We denote $\Omega_\ell =\omega \times (-\ell,0)$ the so-called thickened road. Reference points in $\Omega^+$ will be denoted $X=(x,y)$, $X'=(x',y')$, with $x$, $x'$ in $\omega$ and $y$, $y'$ in $(-\ell,L)$. We work with 
\begin{equation}
\label{d-sigma}
d\rho= \left(
\frac{\vinf}{\m}\mathbf 1_{\Omega}(x,y)+\frac{1}{\ell}\frac{\uinf}{\m}\mathbf 1_{\Omega_\ell}(x,y)
\right)\,dxdy,
\end{equation}
which is a probability measure as can  be checked thanks to \eqref{eq-pour-steady}, and with
\begin{equation}
\label{f(x,y)}
f(x,y)=\frac{v(x,y)}{\vinf}\mathbf 1_{\Omega}(x,y)+ \frac{u(x)}{\uinf}\mathbf 1_{\Omega_\ell}(x,y), \quad (x,y)\in \Omega^+=\omega \times (-\ell,L),
\end{equation}
where we have omitted the $t$ variable.

The point is that, as $\ell \to 0$,  the $L^\infty$ norm of the measure blows-up. Fortunately, as $\ell \to 0$, the domain $\Omega^+$ shrinks to $\Omega$ and moreover we only need to consider $f(x,y)$ given by \eqref{f(x,y)} (in particular $f$ is independent on $y$ in the thickned road). 

Observe first that
$$
 \moy:=\int_{\Omega ^+} f\, d\rho  = \frac 1 \m \int_\Omega v(x,y)\,dxdy+\frac{1}{\ell \m}\int_{\Omega _\ell } u(x)\,dxdy=\frac 1 \m \int_\Omega v\,dxdy+\frac{1}{\m}\int_{\omega } u\,dx= 1,
$$
and that
\begin{equation}
\label{1}
\Vert f -\moy\Vert ^2_{L^2(\Omega^+,d\rho)}=\int_\Omega \left(\frac{v}{\vinf}-1\right)^2\frac{\vinf}{\m}\,dxdy+\int_\omega \left(\frac{u}{\uinf}-1\right)^2\frac{\uinf}{\m}\,dx=\frac 2 \m \H_2(t).
\end{equation}

Next, similarly to the equality in \eqref{ineg-PW}, it is straightforward to check that 
\begin{equation}\label{2}
2 \Vert f-\moy\Vert ^2_{L^2(\Omega^+,d\rho)}= I_{\Omega,\Omega}+2I_{\Omega,\Omega_\ell}+I_{\Omega_\ell,\Omega_\ell},
\end{equation}
where
\begin{equation*}
I_{A,B}:=\int _{X\in A}\int_{X'\in B} \left(f(X)-f(X')\right)^2 \rho(X)\rho(X')\, dX'dX.
\end{equation*}

$(i)$ We start with the term
\begin{eqnarray*}
I_{\Omega,\Omega}&=&\iint_{(X,X')\in \Omega^2}\left(\frac{v(x,y)}{\vinf}-\frac{v(x',y')}{\vinf}\right)^2\frac{(\vinf)^2}{\m^2}\,dXdX'\\
&=& \frac{1}{\m^2} \iint _{(X,X')\in \Omega ^2}\left(v(x,y)-v(x',y')\right)^2 \,dXdX',
\end{eqnarray*}
and we are in the footsteps of the classical case. Using \eqref{ineg-PW} we get
\begin{equation}
\label{3}
I_{\Omega,\Omega}\leq \frac{1}{\m^2}2\mO C_{N}(\text{Diam } \Omega)^2 \int_\Omega \vert \nabla v\vert ^2\,dxdy=\frac{\mO \vinf }{\m^2}2 C_{N}(\text{Diam } \Omega)^2 \int_\Omega \frac{\vert \nabla v\vert ^2}{\vinf}\,dxdy.
\end{equation}

$(ii)$ Let us now turn to the term
\begin{eqnarray*}
I_{\Omega_\ell,\Omega_\ell}&=&\iint_{(X,X')\in \Omega_\ell^2}\left(\frac{u(x)}{\uinf}-\frac{u(x')}{\uinf}\right)^2\frac{1}{\ell ^2}\frac{(\uinf)^2}{\m^2}\,dXdX'\\
&=& \frac{1}{\m^2} \iint _{(x,x')\in \omega ^2}(u(x)-u(x'))^2\,dxdx',
\end{eqnarray*}
and, again, we are in the footsteps of the classical case. Using \eqref{ineg-PW} we get
\begin{equation}
\label{4}
I_{\Omega_\ell,\Omega_\ell}\leq \frac{1 }{\m^2}2\mo C_{N-1}(\text{Diam } \omega)^2 \int_\omega \vert \nabla u\vert ^2\,dx=\frac{\mo \uinf }{\m^2}2 C_{N-1}(\text{Diam } \omega)^2 \int_\omega \frac{\vert \nabla u\vert ^2}{\uinf}\,dx.
\end{equation}

$(iii)$ It remains to estimate the so-called unconventional term involving crossed terms, namely
\begin{eqnarray*}
I_{\Omega,\Omega_\ell}&=&\int_{X\in \Omega}\int_{X'\in \Omega_\ell}\left(\frac{v(x,y)}{\vinf}-\frac{u(x')}{\uinf}\right)^2 \frac{\vinf \uinf}{\m^2}\frac 1 \ell\, dX'dX\\
&=& \int_{X\in \Omega}\int_{x'\in \omega}\left(\frac{v(x,y)}{\vinf}-\frac{u(x')}{\uinf}\right)^2 \frac{\vinf \uinf}{\m^2}\, dx'dxdy.
\end{eqnarray*}
We can split it into three pieces, thanks to the following inequality: 
$$
\left(\frac{v(x,y)}{\vinf}-\frac{u(x')}{\uinf}\right)^2\leq 3\left(\left(\frac{v(x,y)}{\vinf}-\frac{v(x,0)}{\vinf}\right)^2 + \left(\frac{v(x,0)}{\vinf}-\frac{u(x)}{\uinf}\right)^2+ \left(\frac{u(x)}{\uinf}-\frac{u(x')}{\uinf}\right)^2\right).
$$
This implies $I_{\Omega,\Omega_\ell} \leq 3 (I_{\Omega,\Omega_\ell}^1+I_{\Omega,\Omega_\ell}^2+I_{\Omega,\Omega_\ell}^3)$ with obvious notations for these three terms, for which we now give a bound. For the first one, we have 
$$
\begin{aligned}
I_{\Omega,\Omega_\ell}^1&=\frac{\uinf}{\vinf\m^2}\int_{X\in \Omega}\int_{x'\in \omega}\left(v(x,y)-v(x,0)\right)^2 dx' dxdy,\\
&= \frac{\uinf\mo}{\vinf\m^2}\int_{x \in \omega}\int_{y \in (0,L)}\left(v(x,y)-v(x,0)\right)^2 \, dydx.
\end{aligned}
$$
We can then apply the classical procedure used to prove the one-dimensional Poincaré inequality, namely 
\begin{eqnarray*}
\int_{x \in \omega}\int_{y \in (0,L)}\left(v(x,y)-v(x,0)\right)^2 \, dydx&=& \int_{x\in \omega}\int_{y\in(0,L)} \left(\int _0 ^y \frac{\partial v}{\partial s}(x,s)\,ds\right)^2\,dydx\\
&\leq & \int_{x\in \omega}\int_{y\in(0,L)}  \int_0^y \left(\frac{\partial v}{\partial s}(x,s)\right)^2\,ds \times y \, dydx\\
&\leq & \frac{L^2}2 \int_{x\in \omega}\int_ 0 ^L \left(\frac{\partial v}{\partial s}(x,s)\right)^2\,dsdx,\\
\end{eqnarray*}
which yields 
$$
I_{\Omega,\Omega_\ell}^1\leq \frac{\uinf\mo L^2}{2\m^2}\int_\Omega \frac{\vert \nabla v\vert ^2}{\vinf}\,dxdy.
$$
Let us now consider 
$$
\begin{aligned}
I_{\Omega,\Omega_\ell}^2&=\frac{ \vinf \uinf}{\m^2} \int_{X\in \Omega}\int_{x'\in \omega}\left(\frac{v(x,0)}{\vinf}-\frac{u(x)}{\uinf}\right)^2\, dx'dxdy,\\
&=\frac{ \vinf \uinf\mO}{\m^2}\int_\omega \left(\frac{v(x,0)}{\vinf}-\frac{u(x)}{\uinf}\right)^2\,dx
\end{aligned}
$$
and we recover, up to a multiplicative constant, the non-gradient term in the definition of the dissipation, see \eqref{D-2}. Finally, for the third term, we have
$$
\begin{aligned}
I_{\Omega,\Omega_\ell}^3&=\frac{ \vinf \uinf}{\m^2}\int_{X\in \Omega}\int_{x'\in \omega}\left(\frac{u(x)}{\uinf}-\frac{u(x')}{\uinf}\right)^2\, dx'dxdy\,\\
&=\frac{ \vinf L}{\uinf \m^2}\int_{x\in \omega}\int_{x' \in \omega}\left(u(x)-u(x')\right)^2 \, dx'dx,
\end{aligned}
$$
which is nothing else that $\frac{\vinf}{ \uinf} L\times  I_{\Omega_\ell,\Omega_\ell}$, with $ I_{\Omega_\ell,\Omega_\ell}$ already estimated in \eqref{4}.
As a result, we obtain
\begin{multline}\label{5}
I_{\Omega,\Omega_\ell} \leq \frac 3 2 \frac{ \uinf}{\m^2} \mo L^2  \int_\Omega \frac{\vert \nabla v\vert ^2}{\vinf}\,dxdy
+3 \frac{ \vinf \uinf}{\m^2}\mO \int_\omega \left(\frac{v(x,0)}{\vinf}-\frac{u(x)}{\uinf}\right)^2\,dx\\
+6C_{N-1} \frac{\vinf}{\m^2}\mO (\text{Diam } \omega)^2 \int_\omega \frac{\vert \nabla u\vert^2}{\uinf}\, dx  .
\end{multline}

Now combining  \eqref{1}, \eqref{2}, \eqref{3}, \eqref{4} and \eqref{5} we reach 
\begin{eqnarray*}
\H_2(t)&\leq &\frac 1 4\frac{\mO}{\mo\nu+\mO\mu}\left(2\mu C_N (\text{Diam } \Omega)^2+ 3\nu L \right) \int _\Omega \frac{\vert \nabla v\vert^2}{\vinf}\,dxdy\\
&&+
\frac 1 2 \frac{\mo}{\mo\nu+\mO\mu}C_{N-1}  (\text{Diam } \omega)^2\left(\nu+6\mu L \right)\int_\omega \frac{\vert \nabla u\vert^2}{\uinf}\,dx\\
&&+\frac 3 2 \frac{\mO}{\mo\nu+\mO\mu} \mu \uinf \int_\omega \left(\frac{v}{\vinf}-\frac{u}{\uinf}\right)^2\,dx,
\end{eqnarray*}
where we have also used the relations \eqref{def-steady-cst} for $\vinf$ and $\uinf$.  Defining
\begin{multline}\label{Lambda-2}
\Lambda_2:=\min \Bigg\{\frac{4}{2\mu C_N (\text{Diam } \Omega)^2+ 3\nu L }\, \frac{\mo\nu+\mO\mu}{\mO} \, d\,;\\
 \frac{2}{C_{N-1}  (\text{Diam } \omega)^2\left(\nu+6\mu L\right)}\, \frac{\mo\nu+\mO\mu}{\mo}\, D
\,;\, \frac  2 3 \frac{\mo\nu+\mO\mu}{\mO}\Bigg\}, 
\end{multline}
we reach the conclusion of Theorem \ref{th:unconv-PW}. 
\end{proof}

\begin{remark}
Applying the following scaling in time 
$$
v(t,x,y)= V(dt,x,y),\ u(t,x)=U(dt,x),
$$
we obtain that $(V,U)$ is a solution to \eqref{syst} for the set of parameters $(d=1, D,\mu,\nu)$ if and only if $(v,u)$ is a solution to \eqref{syst} for the set of parameters $(d,dD,d\mu,d\nu)$, so that the {\it actual} decay rate should satisfy
$$
\Lambda(d,dD,d\mu,d\nu)=d\Lambda(1,D,\mu,\nu).
$$
With the scaling $v(t,x,y)=V(Dt,x,y), u(t,x)=U(Dt,x)$, we get 
$$
\Lambda(Dd,D,D\mu,D\nu)=D\Lambda(d,1,\mu,\nu). 
$$
We observe that $\Lambda_2$ given by \eqref{Lambda-2} satisfies these two expected scaling properties. 
\end{remark}

\section{Long time behavior of the TPFA scheme}\label{s:discrete}

In this section, we consider the finite volume scheme \eqref{init_scheme}-\eqref{scheme} for the field-road diffusion model. The assumptions on the initial data are the same as in the continuous case: $v_0\in\Linf(\Omega)$ and $u_0\in\Linf(\omega)$ are nonnegative and satisfy $M_0>0$. We also assume that the meshes ${\mathcal M}_\Omega$ and ${\mathcal M}_\omega$ satisfy the admissibility and compatibility assumptions introduced in subsection \ref{ss:TPFA}. We start with some preliminary results: existence and uniqueness of a solution to the scheme, positivity, mass conservation and steady-states. Then, we will focus on the long time behavior of the scheme. As in the continuous case, we will establish the exponential decay of the approximate solutions towards the steady-state, a result based on a discrete counterpart of the entropy-dissipation relation \eqref{PW}.

\subsection{Preliminary results}\label{ss:preliminary}

As already noticed in the introduction, the scheme \eqref{scheme} consists, at each time step, in a square linear system of equations of size $\# \T_{\Omega}+2\# \T_{\omega}$. 
We can obtain a weak formulation of the scheme by multiplying the equations in \eqref{scheme} by some test values and summing over $\T_\Omega$, $\E_{\Omega}^r$, $\T_{\omega}$. For a given vector
$$((\varphi_K)_{K\in\T_{\Omega}}, (\varphi_{K^*})_{K^*\in \T_{\omega}}, (\psi_{K^*})_{K^*\in \T_{\omega}}),
$$
we obtain
\begin{multline}\label{scheme_weak}
\ds\sum_{K\in\TO} m_K\phiK\frac{v_K^n-v_K^{n-1}}{\delta t}
+\ds\sum_{K^*\in\To} m_{K^*}\psiKe\frac{\uKe^n-\uKe^{n-1}}{\delta t}\\
+d\!\!\!\sum_{\sigma=K|K^*}\!\!\!\ts
(\vK^n-\vKe^n)(\phiK-\phiKe)=
-D\!\!\!\sum_{\sigma^*=K^*|L^*}\!\!\!\tse(\uKe^n-u_{L^*}^n)
(\psiKe-\psiLe)\\-d\!\!\!\sum_{\sigma=K|L}\!\!\!\ts (v_K^n-v_L^n)(\varphi_K-\varphi_L)
-\!\!\!\sum_{K^*\in\To}\!\!\! \mKe (\mu \uKe^n-\nu\vKe^n)(\psiKe-\phiKe).
\end{multline}
This weak formulation \eqref{scheme_weak} is equivalent to the scheme \eqref{scheme}. Indeed, setting one test value equal to 1 and the other ones equal to 0 permits to recover the scheme  \eqref{scheme} from \eqref{scheme_weak}.

\begin{lemma}[Well-posedness and basic facts] 
There exists a unique solution $$\left((v_K^n)_{K\in\T_{\Omega}, n\geq 0}, (\vKe^n)_{K^*\in\T_{\omega}, n\geq 1}, (\uKe^n)_{K^*\in\T_{\omega}, n\geq 0}\right)$$ to the scheme \eqref{init_scheme}-\eqref{scheme}. Moreover it  is nonnegative, positive as soon as $n\geq 1$,  and preserves the total mass  $\m$, namely
\begin{equation}\label{mass_cons}
\ds\sum_{K\in\TO} m_K \vK^n+\ds\sum_{K^*\in\To} m_{K^*}\uKe^n=\m, \quad \forall n\geq 0.
\end{equation}
\end{lemma}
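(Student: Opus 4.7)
The plan rests on exploiting the weak formulation \eqref{scheme_weak} with four tailored choices of test functions. For existence and uniqueness, since \eqref{scheme} is a square linear system of size $\#\T_\Omega+2\#\T_\omega$, existence reduces to uniqueness. To prove uniqueness I would plug the solution into the homogeneous version of \eqref{scheme_weak} (i.e.\ with $v_K^{n-1}=\uKe^{n-1}=0$) using $\varphi_K:=\nu v_K^n$, $\varphi_{K^*}:=\nu\vKe^n$, $\psi_{K^*}:=\mu\uKe^n$. This particular $\nu/\mu$ scaling turns the coupling term into the perfect square $m_{K^*}(\mu\uKe^n-\nu\vKe^n)^2$, while the time contribution yields $\tfrac{\nu}{\delta t}m_K(v_K^n)^2$ and $\tfrac{\mu}{\delta t}m_{K^*}(\uKe^n)^2$, and each diffusion term becomes a squared finite difference. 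The sum of these nonnegative quantities equals zero, so each vanishes, forcing $v_K^n=\uKe^n=0$; \eqref{scheme.interf} then gives $\vKe^n=0$.

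Nonnegativity follows by induction on $n$, the case $n=0$ being immediate from \eqref{init_scheme}. Granted $v_K^{n-1},\uKe^{n-1}\geq 0$, I test \eqref{scheme_weak} against the negative parts $\varphi_K:=-\nu(v_K^n)_-$, $\varphi_{K^*}:=-\nu(\vKe^n)_-$, $\psi_{K^*}:=-\mu(\uKe^n)_-$. Since $x\mapsto -x_-$ is nondecreasing, each pure diffusion bilinear form $(v_K^n-v_L^n)(\varphi_K-\varphi_L)$ is $\geq 0$; a short four-case analysis on the signs of $(\mu\uKe^n,\nu\vKe^n)$ shows that the coupling term $(\mu\uKe^n-\nu\vKe^n)(\psi_{K^*}-\varphi_{K^*})$ is also $\geq 0$---in the delicate ``both negative'' case it collapses to $(\mu\uKe^n-\nu\vKe^n)^2$, which is precisely where the asymmetric scaling is needed. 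The time term produces $\tfrac{\nu}{\delta t}\sum_K m_K(v_K^n)_-^2+\tfrac{\mu}{\delta t}\sum_{K^*}m_{K^*}(\uKe^n)_-^2$ plus a nonnegative remainder coming from $(v_K^n)_- v_K^{n-1}\geq 0$ and $(\uKe^n)_-\uKe^{n-1}\geq 0$. Hence $(v_K^n)_-=(\uKe^n)_-=0$, and $\vKe^n\geq 0$ is then inherited from \eqref{scheme.interf}.

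For strict positivity at $n\geq 1$, I would rewrite \eqref{scheme.v} by isolating $v_K^n$: every term on the right is nonnegative, so if $v_K^1=0$ then $v_K^0=0$, $v_L^1=0$ for every field neighbor $L$, and (if $K\in\T_\Omega^r$) $\vKe^1=0$ on the matching road cell. Propagating through the connected mesh $\T_\Omega$ yields $v^1\equiv 0$ throughout the field; \eqref{scheme.interf} then forces $\uKe^1\equiv 0$ on the road, and \eqref{scheme.u} in turn gives $\uKe^0\equiv 0$, contradicting $M_0>0$. A symmetric argument, starting from a hypothetical zero of $\uKe^1$, rules out vanishing road values. Mass conservation, finally, is obtained by the trivial test $\varphi_K\equiv\varphi_{K^*}\equiv\psi_{K^*}\equiv 1$ in \eqref{scheme_weak}: all differences and the coupling term vanish, leaving $\sum_K m_K(v_K^n-v_K^{n-1})/\delta t+\sum_{K^*}m_{K^*}(\uKe^n-\uKe^{n-1})/\delta t=0$, whence \eqref{mass_cons} by telescoping from \eqref{init_scheme}.

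The one nonroutine ingredient is identifying the correct asymmetric $\nu/\mu$ rescaling of the test functions so that the exchange term contributes nonnegatively both in the uniqueness step (where it becomes $(\mu\uKe^n-\nu\vKe^n)^2$) and in the nonnegativity step (where the four-case sign analysis is precisely closed by the same weighting); without it, the ``both negative'' configuration of the coupling term cannot be controlled. Once this scaling is in hand, everything else is standard TPFA bookkeeping and a discrete maximum-principle propagation along the connected mesh.
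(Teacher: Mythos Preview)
Your proof is correct and follows essentially the same route as the paper's. Both proofs hinge on the weak formulation \eqref{scheme_weak} with the same four test choices: the $\nu/\mu$-scaled identity for uniqueness, the $\nu/\mu$-scaled negative parts for nonnegativity, the constant $1$ for mass conservation, and a connectivity/propagation argument for strict positivity. Your four-case analysis of the coupling term and the separate recovery of $v_{K^*}^n\geq 0$ from \eqref{scheme.interf} are more explicit than the paper's terse statement, but the substance is identical. One small point: you write the positivity argument only at $n=1$, whereas the paper treats a general $n\geq 1$ and inducts backward to $n=0$; your version extends immediately by forward induction (once $v_K^{n-1},u_{K^*}^{n-1}>0$, isolating $v_K^n$ in \eqref{scheme.v} gives $v_K^n>0$ directly), so this is only a matter of presentation.
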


\begin{proof} 

Assume $v_K^{n-1}=0$ for all $K\in\TO$ and $u_{K^*}^{n-1}=0$ for all $K^*\in\To$. Choosing $\phiK=\nu\vK^n$, $\phiKe=\nu \vKe^n$ and $\psiKe=\mu \uKe^n$ in \eqref{scheme_weak} yields existence and uniqueness of a solution to the scheme \eqref{scheme} at each time step. 

Assume now  $v_K^{n-1}\geq 0$ for all $K\in\TO$ and $u_{K^*}^{n-1}\geq 0$ for all $K^*\in\To$. Choosing $\phiK=\nu(\vK^n)^-$, $\phiKe=\nu (\vKe^n)^-$ and $\psiKe=\mu (\uKe^n)^-$  (where $x^-$ denotes the negative part of $x\in\R$) in \eqref{scheme_weak} yields by induction the nonnegativity of the solution to the scheme \eqref{scheme} at each time step:
$$
\vK^n\geq 0, \; \forall K\in\TO,\quad  \vKe^n\geq 0,\, \uKe^n\geq 0, \; \forall K^*\in\To.
$$

Now that we have established the nonnegativity of the solution to \eqref{scheme}, let us prove its positivity. Let $n\geq 1$ be given. Assume by contradiction that there is a $K_0\in \TO$ such that $v_{K_0}^n=0$ (the case $u_{K_0^*}^n=0$ for a $K_0^*\in \To$ being treated similarly). From  \eqref{scheme.v}, we deduce $v_L^n=0$ for all $L\in \TO$ neighbouring $K_0$ and $v_{K^*}^n=0$ for all $K^*\in \To$ bordering $K_0$. By repeating this we get $v_K^n=v_{K^*}^n=0$ for all $(K,K^*)\in \TO \times \To$. From  \eqref{scheme.interf}, we get $u_{K^*}^n=0$ for all $K^*\in \To$. By induction, we obtain $v_K^0=u_{K^*}^0=0$ for all $(K,K^*)\in \TO \times \To$, which is a contradiction. If there is a ${K_0^*}\in\To$ such that $v_{K_0^*}^n=0$, \eqref{scheme.interf} implies that $v_{K_0}^n=0$ for $K_0$ such that $K_0|K_0^* \in\E_{\Omega}^r$ and we come back to the preceding case. Finally, we obtain the positivity of the set of discrete solutions as soon as $n\geq 1$. 


Last, choosing the test vector constant equal to 1 in \eqref{scheme_weak} leads to the conservation of the total mass \eqref{mass_cons}.
\end{proof}

A steady-state is a solution of the form $\left((v_K^\infty)_{K\in\T_{\Omega}}, (\vKe^\infty)_{K^*\in\T_{\omega}}, (\uKe^\infty)_{K^*\in\T_{\omega}}\right)$, that is independent on $n$. Choosing $\phiK=\nu\vK^\infty$, $\phiKe=\nu \vKe^\infty$ and $\psiKe=\mu \uKe^\infty$ in \eqref{scheme_weak}, we get that the steady-state is actually constant in space: there are $\vinf \geq 0$ and $\uinf\geq 0$ such that  $\vK^\infty=\vinf=\vKe^\infty$ and $\uKe^\infty=\uinf$, for all $K\in \TO$, $K^*\in \To$. We also obtain that $\nu \vinf-\mu\uinf=0$ and, from the  mass conservation, that $m_\Omega \vinf+m_\omega \uinf=\m$. Finally, the steady-state of the scheme coincides with the steady-state of the continuous problem defined by \eqref{def-steady-cst}.

\subsection{Exponential decay of discrete relative entropy}\label{ss:entropy-dis}

As in the continuous case, we investigate the decay in time of some discrete relative entropies applied to the solution to the scheme. 
For any twice differentiable function $\Phi:[0,+\infty)\to [0,+\infty)$ such that
$\Phi''>0,\  \Phi(1)=0,\ \Phi'(1)=0$,
we define a discrete entropy, relative to the steady-state $(\vinf,\uinf)$, by 
\begin{equation}\label{entropy}
\HPhi^n:=\ds\sum_{K\in\TO} m_K \vinf \Phi (\ds \frac{\vK^n}{\vinf}) +\ds\sum_{\Ke\in\To} m_{K^*} \uinf \Phi(\ds\frac{\uKe^n}{\uinf}), \quad \forall n\geq 0.
\end{equation}
This is obviously the discrete counterpart of \eqref{def-H} and Proposition \ref{prop:dissipation-dis} states that it is dissipated along time.

\begin{proposition}[Entropy dissipation]\label{prop:dissipation-dis}
Let $$((v_K^n)_{K\in\T_{\Omega}, n\geq 0}, (\vKe^n)_{K^*\in\T_{\omega}, n\geq 1}, (\uKe^n)_{K^*\in\T_{\omega}, n\geq 0})$$ be the solution to the scheme \eqref{init_scheme}-\eqref{scheme}, and $(\vinf,\uinf)$ the associated steady-state defined by \eqref{def-steady-cst}. Then the discrete entropy defined by \eqref{entropy} is dissipated along time, namely
\begin{equation}\label{entropy-dissipation}
\ds\frac{\HPhi^n-\HPhi^{n-1}}{\delta t} \leq -\DPhi^n\ \leq 0, \quad \forall n\geq 1,\\
\end{equation}
where
\begin{multline}\label{dissipation}
\DPhi^n:= d\sum_{\sigma=K|K^*}\ts
(\vK^n-\vKe^n)\left(\Phi'(\frac{\vK^n}{\vinf})-\Phi'(\frac{\vKe^n}{\vinf})\right)
\\
+d\sum_{\sigma=K|L}\ts (v_K^n-v_L^n)\left(\Phi'(\frac{\vK^n}{\vinf})-\Phi'(\frac{v_L^n}{\vinf})\right)\\
+D\sum_{\sigma^*=K^*|L^*}\tse(\uKe^n-u_{L^*}^n)
\left(\Phi'(\frac{\uKe^n}{\uinf})-\Phi'(\frac{u_{L^*}^n}{\uinf})\right)\\
+\mu \uinf\sum_{K^*\in\To} \mKe  \left(\ds\frac{\uKe^n}{u^{\infty}}-\frac{\vKe^n}{\vinf}\right)\left(\Phi'(\frac{\uKe^n}{u^{\infty}})-\Phi'(\frac{\vKe^n}{\vinf})\right)
\end{multline}
is the so-called dissipation.
\end{proposition}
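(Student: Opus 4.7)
The plan is to mirror the continuous proof of Proposition \ref{prop:dissipation}, replacing integration by parts by the discrete weak formulation \eqref{scheme_weak} and handling the implicit-in-time discretization through the convexity of $\Phi$.

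First I would estimate the time increment of the entropy. Since $\Phi$ is convex, for any $a,b\geq 0$ we have $\Phi(b)-\Phi(a)\leq \Phi'(b)(b-a)$. Applying this pointwise to the ratios $v_K^n/\vinf$ vs.\ $v_K^{n-1}/\vinf$ (and similarly for $u$), and multiplying by $\vinf m_K$ (resp.\ $\uinf m_{K^*}$), I get
\begin{equation*}
\HPhi^n-\HPhi^{n-1}\leq \sum_{K\in\TO}m_K\,\Phi'\!\left(\tfrac{\vK^n}{\vinf}\right)(\vK^n-\vK^{n-1})
+\sum_{K^*\in\To}m_{K^*}\,\Phi'\!\left(\tfrac{\uKe^n}{\uinf}\right)(\uKe^n-\uKe^{n-1}).
\end{equation*}
Dividing by $\delta t$ reduces the problem to estimating this right-hand side via the scheme.

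Next, I would plug into the weak formulation \eqref{scheme_weak} the test values
\begin{equation*}
\phiK=\Phi'\!\left(\tfrac{\vK^n}{\vinf}\right),\quad \phiKe=\Phi'\!\left(\tfrac{\vKe^n}{\vinf}\right),\quad \psiKe=\Phi'\!\left(\tfrac{\uKe^n}{\uinf}\right).
\end{equation*}
Reading off \eqref{scheme_weak}, the two time-difference sums above equal the three flux sums on the right (internal field edges, road edges, interface edges $K|K^*$) plus the exchange term. Each of the three flux sums now takes the exact form appearing in $\DPhi^n$, with the correct signs.

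For the exchange term, I would use the steady-state identity $\nu\vinf=\mu\uinf$ from \eqref{eq-pour-steady} to rewrite
\begin{equation*}
\mu\uKe^n-\nu\vKe^n=\mu\uinf\left(\tfrac{\uKe^n}{\uinf}-\tfrac{\vKe^n}{\vinf}\right),
\end{equation*}
so that
\begin{equation*}
-\sum_{K^*\in\To}\mKe(\mu\uKe^n-\nu\vKe^n)(\psiKe-\phiKe)=-\mu\uinf\sum_{K^*\in\To}\mKe\left(\tfrac{\uKe^n}{\uinf}-\tfrac{\vKe^n}{\vinf}\right)\!\left(\Phi'\!\left(\tfrac{\uKe^n}{\uinf}\right)-\Phi'\!\left(\tfrac{\vKe^n}{\vinf}\right)\right),
\end{equation*}
which matches the last, non-gradient, piece of $\DPhi^n$. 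Summing the four contributions yields $(\HPhi^n-\HPhi^{n-1})/\delta t\leq -\DPhi^n$. Finally, the nonnegativity of $\DPhi^n$ follows because each summand is of the form $(a-b)(\Phi'(a/c)-\Phi'(b/c))\geq 0$, a consequence of the monotonicity of $\Phi'$ (since $\Phi''>0$).

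The only genuine subtlety, compared to the continuous proof, is the time-discrete step: one cannot differentiate $\Phi(v/\vinf)$ directly, so the convexity inequality above is essential to turn a difference of $\Phi$-values into the product of $\Phi'$ evaluated at the \emph{new} time and the increment of the unknown. This is precisely what makes the backward Euler scheme compatible with entropy dissipation; the rest of the proof is a transcription of the continuous argument, where \eqref{scheme_weak} plays the role of \eqref{weak_form}.
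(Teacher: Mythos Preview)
Your proposal is correct and follows essentially the same approach as the paper: the convexity inequality $\Phi(b)-\Phi(a)\leq \Phi'(b)(b-a)$ to handle the implicit time step, then the choice of test values $\phiK=\Phi'(\vK^n/\vinf)$, $\phiKe=\Phi'(\vKe^n/\vinf)$, $\psiKe=\Phi'(\uKe^n/\uinf)$ in \eqref{scheme_weak}, the rewriting of the exchange term via $\nu\vinf=\mu\uinf$, and the nonnegativity of $\DPhi^n$ from the monotonicity of $\Phi'$. Your write-up is in fact more explicit than the paper's on the exchange-term algebra.
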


\begin{proof} Due to the convexity of $\Phi$, we have 
$$
\ds\frac{\HPhi^n-\HPhi^{n-1}}{\delta t}\leq \ds\sum_{K\in\TO} m_K\frac{v_K^n-v_K^{n-1}}{\delta t}\Phi'(\frac{\vK^n}{\vinf}) 
+\ds\sum_{K^*\in\To} m_{K^*}\frac{\uKe^n-\uKe^{n-1}}{\delta t} {\Phi'}(\frac{\uKe^n}{\uinf}).
$$
Then, we apply \eqref{scheme_weak} with 
$$\phiK=\Phi'(\ds\frac{\vK^n}{\vinf}),\  \phiKe=\Phi'(\ds\frac{\vKe^n}{\vinf}),\ \psiKe=\Phi'(\ds\frac{\uKe^n}{\uinf}),
$$ which leads to the entropy-dissipation relation \eqref{entropy-dissipation}, with the dissipation term $\DPhi^n$ rewritten as
\eqref{dissipation} thanks to \eqref{def-steady-cst}. Moreover the dissipation is nonnegative thanks to the monotonicity of $\Phi'$. 
\end{proof}

In the special case where $\Phi(s)=\Phi_2(s)=\frac 12 (s-1)^2$, we denote by $\Hd^n$ and $\Dd^n$ the corresponding entropy and dissipation at step $n$. They rewrite as 
\begin{eqnarray}
\Hd ^n&=&\ds\frac{1}{2} \ds\sum_{K\in\TO} m_K\frac{(\vK ^n-\vinf)^2}{\vinf}+\ds\frac{1}{2} \ds\sum_{K^*\in\To} m_{K^*}\frac{(\uKe ^n-\uinf)^2}{\uinf},\label{H-2-bis}\\
\Dd ^n&=&d\sum_{\sigma=K|K^*}\ts\frac{(\vK ^n-\vKe ^n)^2}{\vinf}+d\sum_{\sigma=K|L}\ts \frac{(v_K^n-v_L^n)^2}{\vinf}\nonumber \\
&&+D\sum_{\sigma^*=K^*|L^*}\tse\frac{(\uKe ^n-u_{L^*}^n)^2}{\uinf}+
\mu \uinf\sum_{K^*\in\To} \mKe  \left(\ds\frac{\uKe ^n}{u^{\infty}}-\frac{\vKe ^n}{\vinf}\right)^2.\label{D-2-bis}
\end{eqnarray}

We note that the relative entropy $\Hd$ corresponds to a weighted $L^2$ distance between the solution to the scheme and the constant steady-state having the same total mass, while the dissipation $\Dd$ corresponds to a weighted $L^2$ norm of a discrete gradient of the solution on the field and the road, with additional exchange terms.  

As in the continuous case, there exists a relation between the entropy $\Hd$ and its dissipation $\Dd$ given in Theorem~\ref{th:unconv-PW-dis}, which yields the exponential decay of $\Hd$ stated next in Theorem~\ref{th:H2-decroit-dis}. 


 \begin{theorem}[Relating entropy and dissipation]
\label{th:unconv-PW-dis} Let $$((v_K^n)_{K\in\T_{\Omega}, n\geq 0}, (\vKe^n)_{K^*\in\T_{\omega}, n\geq 1}, (\uKe^n)_{K^*\in\T_{\omega}, n\geq 0})$$ be the solution to the scheme \eqref{init_scheme}-\eqref{scheme},  and $(\vinf,\uinf)$ the associated steady-state defined by \eqref{def-steady-cst}. Then, there holds 
\begin{equation}\label{rel-HD}
\Hd ^n\leq \ds\frac{1}{\Lambda} \Dd ^n, \quad \forall n\geq 1,
\end{equation}
for some positive constant $\Lambda$ depending on the dimension $N$, the domain $\Omega$ (including $\omega$ and $L$),  the transfer rates $\mu$, $\nu$, and the diffusion coefficients $d$, $D$.
\end{theorem}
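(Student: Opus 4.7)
The plan is to transcribe the proof of Theorem \ref{th:unconv-PW} into the discrete TPFA setting. On the combined collection of cells $\T_\Omega \sqcup \T_\omega$, I introduce the probability weights $\mu_K := m_K\vinf/\m$ for $K\in\T_\Omega$ and $\mu_{K^*} := m_{K^*}\uinf/\m$ for $K^*\in\T_\omega$, which sum to one by \eqref{def-steady-cst}. The discrete function $f$ defined by $f_K := v_K^n/\vinf$ on field cells and $f_{K^*} := u_{K^*}^n/\uinf$ on road cells then has weighted mean equal to $1$ thanks to the mass conservation \eqref{mass_cons}, and a direct computation shows that its weighted squared deviation from the mean equals $2\Hd^n/\m$. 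Expanding the classical variance identity gives
$$\frac{4\Hd^n}{\m} \;=\; I_{\Omega,\Omega} \;+\; 2\, I_{\Omega,\omega} \;+\; I_{\omega,\omega},$$
where each $I_{A,B}$ is the obvious double sum of $\mu_K\mu_L(f_K-f_L)^2$.

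For the diagonal terms $I_{\Omega,\Omega}$ and $I_{\omega,\omega}$, I will invoke the discrete Poincaré-Wirtinger inequality available on any admissible TPFA mesh of a bounded convex domain, in the form (see e.g.~\cite{Eym-Gal-Her-00}) that a double sum of squared cell differences is controlled by a weighted sum of squared edge jumps, i.e.~by a discrete $H^1$ seminorm. Applied with $(v_K^n/\vinf)_{K\in\T_\Omega}$ on $\Omega$, and then with $(u_{K^*}^n/\uinf)_{K^*\in\T_\omega}$ on $\omega$, these two inequalities control $I_{\Omega,\Omega}$ and $I_{\omega,\omega}$ respectively by multiples of the interior-edge sums $\sum_{\sigma=K|L}\ts(v_K^n-v_L^n)^2/\vinf$ and $\sum_{\sigma^*=K^*|L^*}\tse(u_{K^*}^n-u_{L^*}^n)^2/\uinf$ already present in $\Dd^n$ in \eqref{D-2-bis}, with constants depending only on the geometry and mesh regularity.

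For the cross term $I_{\Omega,\omega}$, I mimic the three-way splitting used in the continuous proof. For each pair $(K,K^*)\in\T_\Omega\times\T_\omega$, I select two auxiliary cells: a field cell $K_1\in\T_\Omega^r$ with $K_1|K^*\in\E_\Omega^r$ (unique by compatibility of the meshes) and a road cell $K_2^*\in\T_\omega$ such that short discrete paths of edges connect $K$ to $K_1$ across the field and $K_2^*$ to $K^*$ across the road. Then
$$(f_K-f_{K^*})^2 \;\leq\; 3\Bigl[(f_K-f_{K_1})^2 + (f_{K_1}-f_{K_2^*})^2 + (f_{K_2^*}-f_{K^*})^2\Bigr].$$
A telescoping-plus-Cauchy-Schwarz argument along the chosen paths turns the first and third pieces into further multiples of the field and road discrete $H^1$ sums above, while the middle piece, after summation against the weights $\mu_K\mu_{K^*}$, reproduces up to an inessential multiplicative constant the interface exchange term $\mu\uinf\sum_{K^*\in\T_\omega}\mKe(u_{K^*}^n/\uinf - v_{K^*}^n/\vinf)^2$ of \eqref{D-2-bis}.

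The main obstacle is precisely the path-based control of the first and third pieces of the cross term: one must select the family of discrete paths so that every interior edge $\sigma\in\E_\Omega^{\rm int}$ (respectively $\sigma^*\in\E_\omega^{\rm int}$) is traversed by a number of paths that, after weighting by $\mu_K\mu_{K^*}$ and summation, produces a coefficient bounded uniformly in the mesh. This is the discrete counterpart of the factor $L^2/2$ emerging from the one-dimensional Poincaré estimate in the continuous proof, and it is where the dimension $N$, the geometry of $\Omega$ and $\omega$, and the mesh regularity will enter the final constant $\Lambda$. Once this uniform bookkeeping is carried out, combining the three estimates yields \eqref{rel-HD} with an explicit $\Lambda>0$ of the same structure as the continuous $\Lambda_2$ given in \eqref{Lambda-2}.
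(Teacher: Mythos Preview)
Your overall strategy---variance decomposition of $\Hd^n$ into diagonal terms $I_{\Omega,\Omega}$, $I_{\omega,\omega}$ and a cross term, control of the diagonals by the discrete Poincar\'e--Wirtinger inequality of \cite{Eym-Gal-Her-00}, and a three-way split of the cross term---is exactly the paper's. The thickened-road device in the paper is cosmetic; your direct weights $\mu_K,\mu_{K^*}$ are equivalent. There are, however, two concrete gaps in your treatment of the cross term.

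First, your intermediate values are the wrong ones. You split through $f_{K_1}=v_{K_1}^n/\vinf$ with $K_1\in\T_\Omega^r$ a \emph{field} cell, so the middle piece $(f_{K_1}-f_{K_2^*})^2=(v_{K_1}^n/\vinf-u_{K_2^*}^n/\uinf)^2$ is \emph{not} the exchange term in \eqref{D-2-bis}, which involves the \emph{trace} unknowns $v_{K^*}^n$. The paper splits instead through $v^*(x):=\sum_{K^*\in\To} v_{K^*}^n\mathbf 1_{K^*}(x)$; this makes the middle piece exactly $\sum_{K^*}\mKe(v_{K^*}^n/\vinf-u_{K^*}^n/\uinf)^2$, and pushes the jump across the interface edge $K_1|K^*$ into the \emph{first} piece, where it is absorbed by the term $d\sum_{\sigma=K|K^*}\ts(v_K^n-v_{K^*}^n)^2/\vinf$ already present in $\Dd^n$. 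Your three-way split, as written, never produces that interface-edge contribution. (Also, choosing $K_1$ to border $K^*$ rather than to lie ``below'' $K$ forces the field path from $K$ to $K_1$ to have length comparable to $\text{Diam }\Omega$ in general, not to $L$; the intermediary should be indexed by a road cell under $K$, not under $K^*$.)

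Second, the ``main obstacle'' you flag---uniform control of edge multiplicities in a path family---is not left open in the paper, and is resolved by a geometric rather than combinatorial argument. For each $(x,y)\in\Omega$ one takes the \emph{vertical} segment to $(x,0)$, sets $\chi_\sigma(x,y)=1$ iff $\sigma$ meets it, and writes $|v(x,y)-v^*(x)|\le\sum_{\sigma=K|L}|v_K^n-v_L^n|\chi_\sigma+\sum_{\sigma=K|K^*}|v_K^n-v_{K^*}^n|\chi_\sigma$. Cauchy--Schwarz with weights $d_\sigma c_\sigma$ (where $c_\sigma=|\mathbf e\cdot\mathbf n_\sigma|$ for the vertical unit vector $\mathbf e$) and the bound $\int_\Omega\chi_\sigma\le m_\sigma c_\sigma L$ close the estimate with constant $L^2$, yielding a bound by $\sum_{\sigma=K|L}\ts(v_K^n-v_L^n)^2+\sum_{\sigma=K|K^*}\ts(v_K^n-v_{K^*}^n)^2$ that is \emph{mesh-independent}. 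Your phrase ``mesh regularity will enter the final constant'' would, if true, give a strictly weaker result than the theorem states: the claimed $\Lambda$ depends only on $N$, $\Omega$, $\mu$, $\nu$, $d$, $D$.
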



\begin{theorem}[Exponential decay of discrete entropy ]\label{th:H2-decroit-dis} Let $$((v_K^n)_{K\in\T_{\Omega}, n\geq 0}, (\vKe^n)_{K^*\in\T_{\omega}, n\geq 1}, (\uKe^n)_{K^*\in\T_{\omega}, n\geq 0})$$ be the solution to the scheme \eqref{init_scheme}-\eqref{scheme}, and $(\vinf,\uinf)$ the associated steady-state defined by \eqref{def-steady-cst}. Then the entropy defined by \eqref{H-2-bis} decays exponentially, namely
$$
0\leq \Hd^n\leq (1+\Lambda \,\delta t)^{-n} \Hd ^0, \quad \forall n\geq 0,
$$
where $\Lambda$ comes from Theorem \ref{th:unconv-PW-dis}.
\end{theorem}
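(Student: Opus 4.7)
The plan is to combine the two earlier ingredients, Proposition~\ref{prop:dissipation-dis} and Theorem~\ref{th:unconv-PW-dis}, via an implicit Gronwall-type argument. This is fully analogous to what happens in the continuous setting (where $\mathcal{H}_2' \le -\mathcal{D}_2 \le -\Lambda_2 \mathcal{H}_2$ leads to $\mathcal{H}_2(t) \le \mathcal{H}_2(0)e^{-\Lambda_2 t}$), except that the backward Euler discretization naturally replaces the exponential $e^{-\Lambda t}$ by the geometric factor $(1+\Lambda \delta t)^{-n}$.

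First I would note that nonnegativity of $\mathcal{H}_2^n$ is immediate from the definition \eqref{H-2-bis}, since it is a weighted sum of squares with positive weights $m_K/v^\infty$ and $m_{K^*}/u^\infty$ (recall that $v^\infty,u^\infty>0$ by \eqref{def-steady-cst} and the positivity of $M_0$). Then, applying Proposition~\ref{prop:dissipation-dis} with $\Phi = \Phi_2$ yields
\begin{equation*}
\frac{\mathcal{H}_2^n - \mathcal{H}_2^{n-1}}{\delta t} \le -\mathcal{D}_2^n, \qquad \forall n \ge 1,
\end{equation*}
while Theorem~\ref{th:unconv-PW-dis} provides the functional inequality $\mathcal{D}_2^n \ge \Lambda\, \mathcal{H}_2^n$. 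Combining these two bounds gives
\begin{equation*}
\mathcal{H}_2^n - \mathcal{H}_2^{n-1} \le -\Lambda\, \delta t\, \mathcal{H}_2^n,
\end{equation*}
which can be rearranged into the key one-step estimate
\begin{equation*}
(1+\Lambda\,\delta t)\,\mathcal{H}_2^n \le \mathcal{H}_2^{n-1}, \qquad \forall n \ge 1.
\end{equation*}

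A straightforward induction on $n$ then yields $\mathcal{H}_2^n \le (1+\Lambda\,\delta t)^{-n} \mathcal{H}_2^0$, which is the claimed exponential decay (with base $(1+\Lambda\,\delta t)^{-1} < 1$). There is no real obstacle here, as all the work has been done upstream: the entropy/dissipation inequality is supplied by Proposition~\ref{prop:dissipation-dis} (whose proof exploits the convexity of $\Phi_2$ to handle the time increment and uses the weak formulation \eqref{scheme_weak} with the appropriate test values), and the crucial step of relating $\mathcal{H}_2^n$ to $\mathcal{D}_2^n$ is precisely the content of Theorem~\ref{th:unconv-PW-dis}, which is where the genuine difficulty — namely the discrete unconventional Poincaré–Wirtinger inequality accounting for the dimension mismatch between field and road — is concentrated.
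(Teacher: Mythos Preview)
Your proposal is correct and takes essentially the same approach as the paper: the authors do not write out a separate proof of Theorem~\ref{th:H2-decroit-dis} but simply note that it follows from combining Proposition~\ref{prop:dissipation-dis} with Theorem~\ref{th:unconv-PW-dis}, and your derivation of the one-step estimate $(1+\Lambda\,\delta t)\,\mathcal{H}_2^n \le \mathcal{H}_2^{n-1}$ followed by induction is exactly the intended (and only natural) argument.
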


Note that, as easily checked via Cauchy-Schwarz inequality, the initial discrete entropy $\Hd ^0$ is smaller than the initial continuous one, so that
$$
0\leq \Hd^n\leq (1+\Lambda \,\delta t)^{-n} \left( \frac{1}{2\vinf} \int_\Omega (v^0-\vinf)^2\, dxdy+\ds\frac{1}{2u^{\infty}}\int_\omega (u^0-u^{\infty})^2\,dx\right), \quad \forall n\geq 0.
$$
Moreover, the exponential decay of $\Hd$ implies the exponential decay of the discrete densities towards the steady-state in $L^2$.

\subsection{Relating discrete entropy and discrete  dissipation, proof of Theorem~\ref{th:unconv-PW-dis}}

As the proof of Theorem \ref{th:unconv-PW} is based on the proof of the Poincar\'e-Wirtinger inequality, the proof of Theorem \ref{th:unconv-PW-dis} is based on the proof of the discrete Poincar\'e-Wirtinger inequality given in \cite{Eym-Gal-Her-00}. The discrete Poincar\'e-Wirtinger inequality applies 
to functions which are piecewise constant in space on a bounded domain $U$ and therefore do not belong to $H^1(U)$. More precisely, if ${\mathcal M}=(\T,\E,\P)$ is an admissible mesh of $U$, we denote by $X(\T)$ the set of piecewise constant functions defined by 
$$
f\in X(\T) \Longleftrightarrow  \exists   (f_K)_{K\in \T} \in \R^\T,  \, f=\sum_{K\in\T} f_K {\mathbf 1}_K.
$$

 We start by recalling  in Lemma~\ref{lem:PW-dis} a key inequality in the proof of the discrete Poincaré-Wirtinger inequality, see (10.13) in  \cite[Proof of Lemma 10.2]{Eym-Gal-Her-00}. %
 

\begin{lemma}\label{lem:PW-dis}
Let $U$ be a polygonal bounded convex open set of $\R^d$ ($d\geq 1$) and ${\mathcal M}= (\T,\E,\P)$ be an admissible mesh of $U$. Then,   for any $f\in X(\mathcal T)$, we have
\begin{equation}\label{ineg-PW-dis}
\iint_{U^2} \left(f(x)-f(y)\right)^2 \,dxdy\leq  C_{d,U}\, (\text{Diam } U)^2\,   \sum_{\sigma=K|L} \tau_\sigma (f_K-f_L)^2,
\end{equation}
with $C_{d,U}$ the measure in $\R^d$ of balls of radius $\text{Diam } U$.
\end{lemma}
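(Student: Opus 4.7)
The plan is to reproduce the purely geometric argument that underlies (10.13) in the proof of Lemma~10.2 of \cite{Eym-Gal-Her-00}. Since $f\in X(\T)$ is piecewise constant, no regularization is required; admissibility of the mesh (orthogonality $x_L-x_K=d_\sigma n_\sigma$) is the only extra ingredient beyond Cauchy--Schwarz and Fubini.

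First I would use convexity of $U$: for any $(x,y)\in U^2$ the segment $[x,y]\subset U$ meets a finite ordered sequence of edges $\sigma_1,\dots,\sigma_p$ separating cells $K_0,\dots,K_p$ with $x\in K_0$ and $y\in K_p$. Introducing the indicator $\chi_\sigma(x,y)\in\{0,1\}$ of ``$[x,y]$ crosses $\sigma$'' and telescoping $f(x)-f(y)=\sum_{i=1}^{p}(f_{K_{i-1}}-f_{K_i})$ gives
$$
|f(x)-f(y)|\le\sum_{\sigma=K|L}\chi_\sigma(x,y)\,|f_K-f_L|.
$$
Writing $e:=(y-x)/|y-x|$ and $c_\sigma(x,y):=|n_\sigma\cdot e|$, I apply Cauchy--Schwarz with the weight $a_\sigma=d_\sigma c_\sigma$. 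Admissibility is crucial here: by orthogonality one has $x_{K_i}-x_{K_{i-1}}=\pm d_{\sigma_i}n_{\sigma_i}$, so $d_{\sigma_i}c_{\sigma_i}=\pm(x_{K_i}-x_{K_{i-1}})\cdot e$, and the telescoping sum yields $\sum_\sigma\chi_\sigma(x,y)\,d_\sigma c_\sigma(x,y)\le|(x_{K_p}-x_{K_0})\cdot e|\le\text{Diam }U$, since all cell centers lie in $U$. This leaves the pointwise bound
$$
(f(x)-f(y))^2\le\text{Diam }U\cdot\sum_\sigma\chi_\sigma(x,y)\,\frac{(f_K-f_L)^2}{d_\sigma\,c_\sigma(x,y)}.
$$

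Next I would integrate over $U^2$, swap the finite sum with the integral, and handle each edge contribution via a change of variables. Parametrizing the pairs $(x,y)$ with $\chi_\sigma(x,y)=1$ by the tuple $(z,e,r_1,r_2)$, where $z\in\sigma$ is the intersection point $[x,y]\cap\sigma$, $e\in S^{d-1}$ the unit direction from $x$ to $y$, and $r_1,r_2>0$ the lengths $|z-x|,|y-z|$, the Jacobian works out to $c_\sigma(r_1+r_2)^{d-1}$, so the $1/c_\sigma$ factor in the integrand cancels cleanly and
$$
\iint_{U^2}\chi_\sigma(x,y)\,\frac{dxdy}{c_\sigma(x,y)}\le m_\sigma\int_{S^{d-1}}\!de\iint_{r_1+r_2\le\text{Diam }U}(r_1+r_2)^{d-1}dr_1dr_2.
$$
A direct computation bounds the right-hand side by $m_\sigma\,(\text{Diam }U)\,C_{d,U}$ (the double radial integral produces a factor $(\text{Diam }U)^{d+1}$ up to a dimensional constant absorbed in the ball volume $C_{d,U}$). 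Combining with the $\text{Diam }U$ factor from the previous step and the $1/d_\sigma$ pulled out in front yields exactly
$$
\iint_{U^2}(f(x)-f(y))^2\,dxdy\le C_{d,U}(\text{Diam }U)^2\sum_{\sigma=K|L}\tau_\sigma\,(f_K-f_L)^2,
$$
which is \eqref{ineg-PW-dis}.

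The main obstacle is the change-of-variables step: one must rigorously establish that the map $(z,e,r_1,r_2)\mapsto(x,y)$ has Jacobian $c_\sigma(r_1+r_2)^{d-1}$, so that the $c_\sigma$ factors cancel and the $(d-1)$-dimensional surface measure of $\sigma$ produces the factor $m_\sigma$ appearing in $\tau_\sigma=m_\sigma/d_\sigma$. Once this Jacobian is in hand, the remaining manipulations are routine; the full bookkeeping is carried out in \cite[Proof of Lemma~10.2]{Eym-Gal-Her-00}, with care needed only to confirm that the resulting constant is dominated by the ball volume $C_{d,U}$ as announced in the statement.
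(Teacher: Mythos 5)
Your argument is correct and is essentially the proof the paper relies on: the paper does not prove Lemma~\ref{lem:PW-dis} itself but cites (10.13) in the proof of Lemma~10.2 of \cite{Eym-Gal-Her-00}, whose convex-domain argument is exactly your chain (telescoping over the crossed edges, Cauchy--Schwarz with weights $d_\sigma c_\sigma$, the bound $\sum_\sigma \chi_\sigma(x,y)\, d_\sigma c_\sigma \le \text{Diam } U$ via the orthogonality of admissible meshes, and the change of variables with Jacobian $c_\sigma (r_1+r_2)^{d-1}$, whose angular and radial integrals give $|S^{d-1}|(\text{Diam } U)^{d+1}/(d+1)\le C_{d,U}\,\text{Diam } U$ per edge). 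The one point you should state explicitly is that the telescoping bound $\sum_i d_{\sigma_i}c_{\sigma_i} = \sum_i |(x_{K_i}-x_{K_{i-1}})\cdot e| \le |(x_{K_p}-x_{K_0})\cdot e|$ requires all increments $(x_{K_i}-x_{K_{i-1}})\cdot e$ to share the same sign, which holds because the segment traverses each $\sigma_i$ from the $K_{i-1}$-side to the $K_i$-side of its supporting hyperplane while the centers $x_{K_{i-1}}$ and $x_{K_i}$ lie on those respective sides.
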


Having in mind this classical result, we now turn to the proof of the so-called unconventional discrete Poincaré-Wirtinger inequality \eqref{rel-HD}, which parallels that of Theorem \ref{th:unconv-PW}.

\begin{proof}[Proof of Theorem \ref{th:unconv-PW-dis}] For $\ell>0$, we \lq\lq enlarge'' $\Omega=\omega \times (0,L)$ to $\Omega^+=\omega\times (-\ell,L)$. We denote $\Omega_\ell =\omega \times (-\ell,0)$ the so-called thickened road. Reference points in $\Omega^+$ will be denoted $X=(x,y)$, $X'=(x',y')$, with $x$, $x'$ in $\omega$ and $y$, $y'$ in $(-\ell,L)$. Let us note that, based on the two meshes $\M_\Omega$ and $\M_\omega$, we can easily define a mesh of $\Omega^+$ just by ``enlarging'' the control volumes of $\omega$ to $\omega \times (-\ell,0)$. We work with the probability measure already defined in \eqref{d-sigma}, namely
$$
d\rho=\left(\ds\frac{\vinf}{\m} {\mathbf 1}_{\Omega}(x,y)+\frac{1}{\ell}\frac{\uinf}{\m} {\mathbf 1}_{\Omega_\ell}(x,y)\right)\,dxdy.
$$
We omit the time variable $n$ in the sequel. We consider $f$ the piecewise constant function on $\Omega^+$ defined by
\begin{equation}
\label{f(x,y)-dis}
f(x,y)=\ds\sum_{K\in\TO} \frac{\vK}{\vinf} {\mathbf 1}_K(x,y)+\ds\sum_{K^*\in\To} \frac{\uKe}{\uinf} {\mathbf 1}_{K^*\times (-\ell,0)}(x,y), \quad (x,y)\in \Omega^+.
\end{equation}
It satisfies 
$$
\moy:=\int_{\Omega ^+} f\, d\rho=1\mbox{  and  } \Hd =\frac{\m}{2}\Vert f-\moy\Vert_{L^2(\Omega^+, d\rho)}^2.
$$
Let us also introduce $v\in X(\T_\Omega)$, $u\in X(\T_{\omega})$ and $v^*\in X(\T_{\omega})$ defined by 
$$
v= \sum_{K\in\TO} \vK {\mathbf 1}_K,\quad u=\ds\sum_{K^*\in\To} {\uKe} {\mathbf 1}_{K^*},\quad v^*=\ds\sum_{K^*\in\To}{\vKe} {\mathbf 1}_{K^*}.
$$

As in the continuous case, see \eqref{1} and \eqref{2}, $\Hd$ can be rewritten as 
$$
\Hd= \frac{\m}{4}(I_{\Omega,\Omega}+2I_{\Omega,\Omega_\ell}+I_{\Omega_\ell,\Omega_\ell}),
$$
where
\begin{equation*}
I_{A,B}:=\int _{X\in A}\int_{X'\in B} \left(f(X)-f(X')\right)^2 \rho(X)\rho(X')\, dX dX'.
\end{equation*}

The terms $I_{\Omega, \Omega}$ and $I_{\Omega_\ell, \Omega_\ell}$ can be estimated  as in the proof of the discrete mean Poincar\'e inequality. 
Indeed, applying Lemma \ref{lem:PW-dis}, we get
\begin{eqnarray*}
I_{\Omega, \Omega}&=&\frac{1}{\m^2}\int _{X\in \Omega}\int_{X'\in \Omega}   \left(v(X)-v(X')\right)^2 \, dX dX'\\
&\leq &\frac{\vinf}{\m^2}C_{N,\Omega}  \, (\text{Diam } \Omega)^2\ds\sum_{\sigma=K|L}\ts \frac{(v_K-v_L)^2}{\vinf},
\end{eqnarray*}
and 
\begin{eqnarray*}
I_{\Omega_\ell, \Omega_\ell}&=&\frac{1}{\m^2}\int _{x\in \omega}\int_{x'\in \Omega} \left(u(x)-u(x')\right) \, dx dx'\\
&\leq & \frac{\uinf}{\m^2}C_{N-1,\omega}  \, (\text{Diam } \omega)^2\, \ds\sum_{\sigma^*=K^*|L^*}\tse\frac{(\uKe-u_{L^*})^2}{\uinf}.
\end{eqnarray*}

 It remains to estimate the so-called unconventional term involving crossed terms, namely $I_{\Omega, \Omega_\ell}$. We write
 \begin{eqnarray*}
I_{\Omega,\Omega_\ell}&=&\int_{X\in \Omega}\int_{X'\in \Omega_\ell}\left(f(X)-f(X')\right)^2 \frac{\vinf \uinf}{\m^2}\frac 1 \ell\, dX'dX\\
&=& \int_{X\in \Omega}\int_{x'\in \omega}\left(\frac{v(x,y)}{\vinf}-\frac{u(x')}{\uinf}\right)^2 \frac{\vinf \uinf}{\m^2}\, dx'dxdy.
\end{eqnarray*}
Introducing $v^*$ as in the continuous case, we obtain $I_{\Omega,\Omega_\ell}\leq 3(I_{\Omega,\Omega_\ell}^1+I_{\Omega,\Omega_\ell}^2+I_{\Omega,\Omega_\ell}^3)$, 
with 
\begin{eqnarray*}
I_{\Omega,\Omega_\ell}^1&=&\frac{\uinf\mo}{\vinf\m^2}\int_{x\in\omega}\int_{y\in (0,L)}(v(x,y)-v^*(x))^2 \, dy dx,\\
I_{\Omega,\Omega_\ell}^2&=&\frac{\vinf \uinf\mO}{\m^2} \int_{x\in\omega}\left(\frac{v^*(x)}{\vinf}-\frac{u(x)}{\uinf}\right)^2  \, dx,\\
I_{\Omega,\Omega_\ell}^3&=&\frac{\vinf L}{\uinf}I_{\Omega_\ell, \Omega_\ell}.
\end{eqnarray*}
Hence, the estimate of $I_{\Omega,\Omega_\ell}^3$ follows from that of $I_{\Omega_\ell, \Omega_\ell}$ above. Next, it is obvious that 
$$
I_{\Omega,\Omega_\ell}^2=\frac{\vinf \uinf\mO}{\m^2} \ds\sum_{\Ke\in\To} \mKe \left(\frac{\vKe}{\vinf}-\frac{\uKe}{\uinf}\right)^2,
$$
which is, up to a multiplicative constant, the non-gradient term  in the definition of the dissipation, see \eqref{D-2-bis}.

It thus only remains to estimate $I_{\Omega,\Omega_\ell}^1$. To do so, we adapt the proof of the discrete Poincaré inequality in \cite[Lemma 10.2]{Eym-Gal-Her-00}. For $\sigma \in \E_\Omega$, we define the function $\chi_{\sigma}:\Omega=\omega\times (0,L) \to \{0, 1\}$ by 
$$
\chi_\sigma(x,y):=\left\{\begin{array}{ll} 1 & \text{ if $\sigma$ intersects the vertical segment connecting $(x,y)$ to $(x,0)$},\\
0  & \text{ if not}.\\
\end{array}
\right.
$$
Therefore, for all $x\in\omega$, for all $y\in (0,L)$, 
$$
\vert v(x,y)-v^*(x)\vert \leq \sum_{\sigma =K|L} \vert v_K-v_L\vert \chi_{\sigma}(x,y)+\sum_{\sigma =K|K^*}\vert v_K-\vKe\vert  \chi_{\sigma}(x,y).
$$
Let $c_{\sigma}=\vert {\mathbf e}\cdot {\mathbf n}_{\sigma}\vert $ where $e$ is a unit vector of the vertical line and ${\mathbf n}_{\sigma}$ is a unit normal vector to $\sigma$. From Cauchy-Schwarz inequality, we have 
\begin{multline}\label{diff-vv*}
( v(x,y)-v^*(x))^2\leq \left(\sum_{\sigma =K|L} \frac{(v_K-v_L)^2}{d_{\sigma} c_{\sigma}}\chi_{\sigma}(x,y)+\sum_{\sigma =K|K^*}\frac{( v_K-\vKe)^2}{d_{\sigma} c_{\sigma}} \chi_{\sigma}(x,y)\right)\\
\times \left(\sum_{\sigma =K|L}d_{\sigma} c_{\sigma}\chi_{\sigma}(x,y)+\sum_{\sigma =K|K^*}d_{\sigma} c_{\sigma}\chi_{\sigma}(x,y)\right).
\end{multline}
Similarly as in the proof of  \cite[Lemma 10.2]{Eym-Gal-Her-00}, the second factor in the above right-hand-side is smaller than the \lq\lq vertical diameter'', that is $L$. Let us now integrate \eqref{diff-vv*} over $(x,y)\in \omega\times (0,L)$. Noticing  that
$$
\int_{x\in\omega}\int_{y\in (0,L)}\chi_{\sigma}(x,y)\, dxdy \leq m_{\sigma} c_{\sigma} L,
$$
we get 
$$
I_{\Omega,\Omega_\ell}^1\leq\frac{\uinf \mo}{\vinf \m^2} L^2 \left(\sum_{\sigma =K|L}\tau_\sigma (v_K-v_L)^2 + \sum_{\sigma =K|K^*} \tau_\sigma (v_K-\vKe)^2\right).
$$

Gathering all the bounds, we obtain \eqref{rel-HD} with a  decay rate
$$
\Lambda=\min(c_1d; c_2D; c_3),
$$
where the $c_i$'s ($1\leq i\leq 3$) are positive constants depending only on $N$, $\Omega$, $\mu$ and $\nu$.
\end{proof}

\section{Numerical experiments}\label{s:num}

For all the numerical experiments performed in this section, we consider the one-dimensional road
$\omega=(-2L,2L)$, and the two-dimensional field $\Omega=\omega\times(0,L)$, where we fix $L=20$.

\subsection{Test cases and profiles}\label{ss:test}

In this subsection we choose one set of parameters, but consider different initial conditions that lead to different test cases. Recently, the role of the founding population, in particular fragmentation, on the success rate of an invasion has received a lot of attention, see \cite{Dru-et-al-07}, \cite{Gar-Roq-Ham-12}, \cite{Maz-Nad-Tol-21}, \cite{Alf-Ham-Roq-preprint} and the references therein. Related to this question, we want to check how convergence to the steady state depends on the initial distribution of individuals, using the following four test cases.

\medskip

\noindent {\bf Test case 1.} Initially the road is empty and individuals are grouped together in the field, say: 
$$
\left\{\begin{aligned}
v_0(x,y)&=100\cdot{\mathbf 1}_{[-2.5,2.5]\times [2.5,7.5]}(x,y),\\
u_0(x)&=0.
\end{aligned}
\right.
$$

\noindent {\bf Test case 2.} Initially, individuals are grouped together on the road and in the field, say:
$$
\left\{\begin{aligned}
v_0(x,y)&=150\cdot {\mathbf 1}_{[-2.5,2.5]\times [2.5,5]}(x,y),\\
u_0(x)&=125\cdot {\mathbf 1}_{[-2.5,2.5]}(x).
\end{aligned}
\right.
$$

\noindent {\bf Test case 3.} Initially, the road is  empty and individuals are scattered in the field, say:
$$
\left\{\begin{aligned}
&v_0(x,y)=100\cdot {\mathbf 1}_{[-10,-7.5]\cup[-5,-2.5]\cup[2.5,5]\cup[7.5,10]}(x)\cdot {\mathbf 1}_{[7.5,10]}(y),\\
&u_0(x)=0.
\end{aligned}
\right.
$$

\noindent {\bf Test case 4.} Initially, individuals are scattered on the road and in the field, say:
$$
\left\{\begin{aligned}
&v_0(x,y)=150\cdot{\mathbf 1}_{[-10,-7.5]\cup[-5,-2.5]\cup[2.5,5]\cup[7.5,10]}(x)\cdot {\mathbf 1}_{[8.75,10]}(y),\\
&u_0(x)=62.5\cdot {\mathbf 1}_{[-10,-7.5]\cup[-5,-2.5]\cup[2.5,5]\cup[7.5,10]}(x).
\end{aligned}
\right.
$$

Note that all these initial conditions lead to the same total mass $\m=2500$. We fix $\mu=1$ and $\nu=5$ so that they have the same steady state, namely $(\vinf,\uinf)=(1.25,6.25)$. We also set the diffusion coefficient in the field to $d=1$ and on the road to $D=1$. 
The mesh we use for the simulations is a triangular mesh of 14336 triangles and we choose a time step equal to $10^{-1}$.

Figures~\ref{fig:CT12_champ} and \ref{fig:CT12_route} show the density profiles in the field and on the road respectively for Test Cases 1 and 2 at different times, while Figures~\ref{fig:CT34_champ} and \ref{fig:CT34_route} show the density profiles for Test Cases 3 and 4 at different times. 

We observe that the solutions to Test Cases 1 and 2 quickly show comparable behavior, as do the solutions to Test Cases 3 and 4.  This suggests that the initial presence or absence of individuals on the road has little effect on the solutions. 

On the other hand, we observe that the homogenization is slightly faster in Test Cases 3-4 than in Test Cases 1-2 (compare at $T=50$ for $v$ and at $T=100$ for $u$). The reason is that Test Cases 3-4 correspond to more fragmented founding populations for which it is easier to invade the whole domain (especially in the presence of moderate diffusion coefficients as chosen here, $D=d=1$).

The first observations above deal with  transient dynamics. In order to compute the decay rates towards the steady state, we have to move to larger time horizons. 
Figure \ref{fig:CT1to4_entropies} shows the time decay of the relative entropies (divided by the value at the first time step) for the four test cases. The computations were stopped when the computed ratio of the relative entropies reached the value $10^{-5}$.

We observe that the computed decay rate, according to Theorems \ref{th:H2-decroit} and \ref{th:H2-decroit-dis}, is the same for the four test cases, namely $\Lambda_{num}= 0.0123$, even if the transient behavior is slightly different, as already observed above.

\begin{figure}[!htb]
\begin{center}
\begin{tabular}{cc}
 \includegraphics[width=0.45\textwidth]{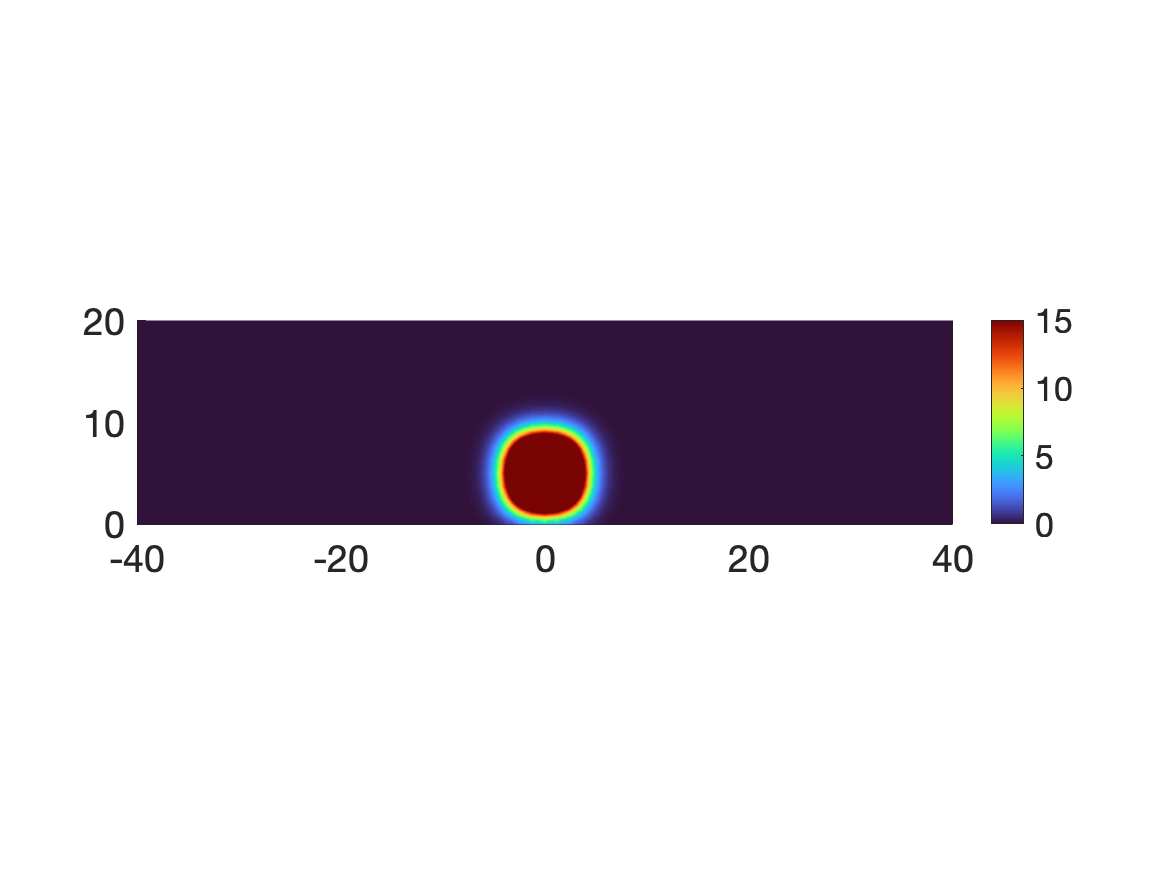}
 &\includegraphics[width=0.45\textwidth]{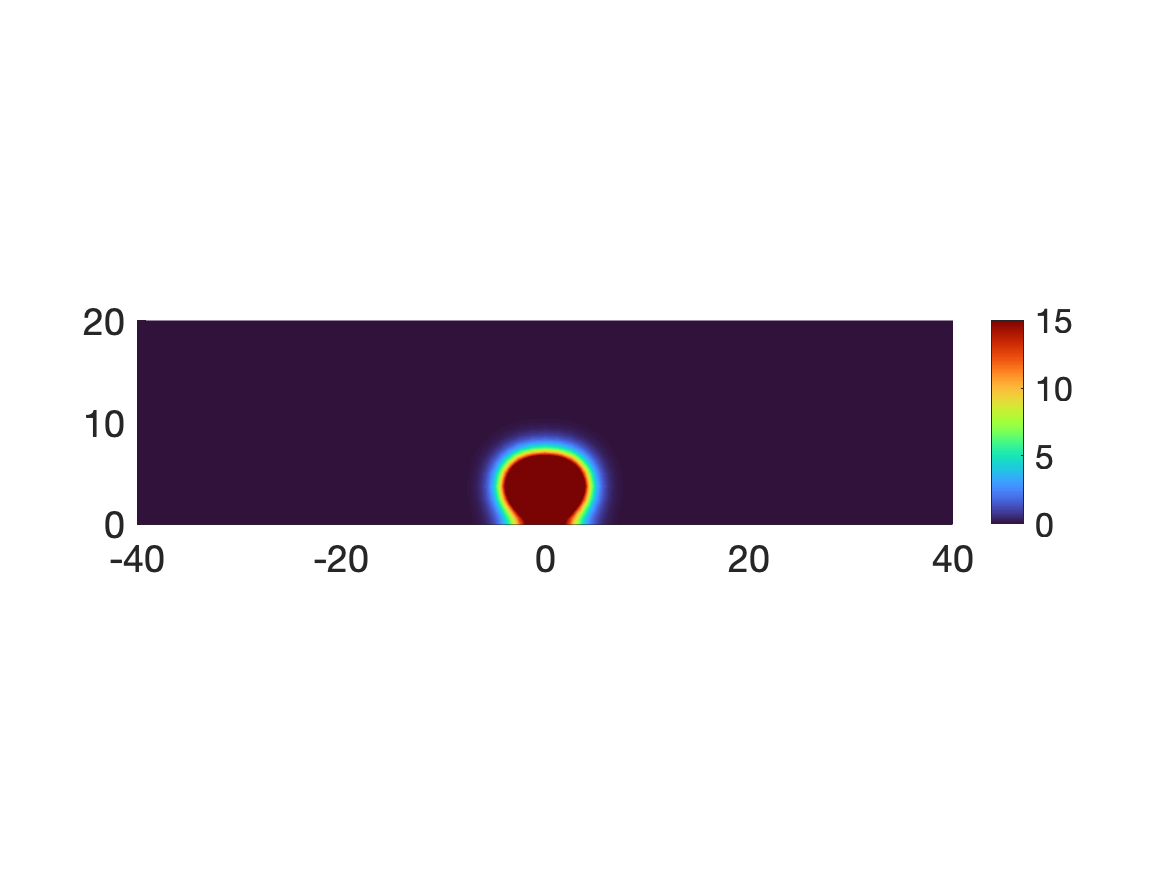}\\[-1.8cm]
 \includegraphics[width=0.45\textwidth]{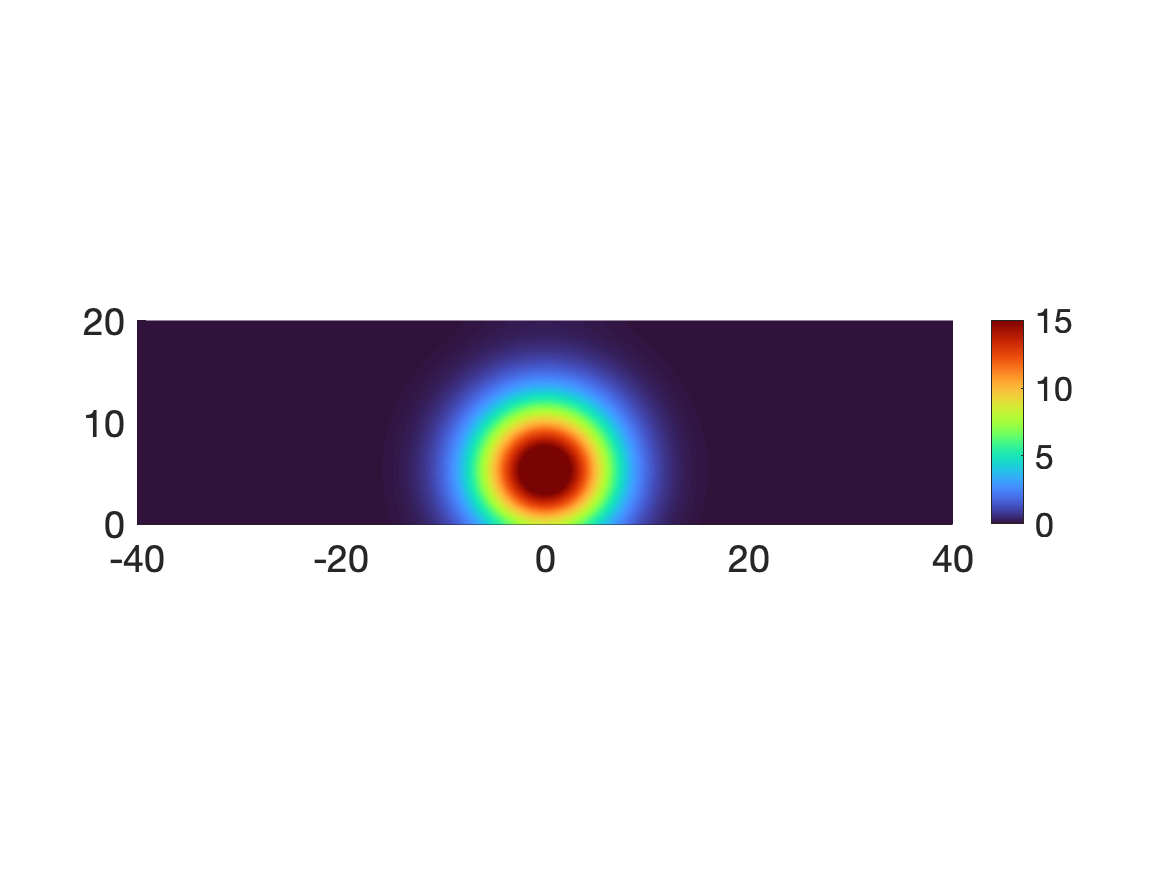}
 &\includegraphics[width=0.45\textwidth]{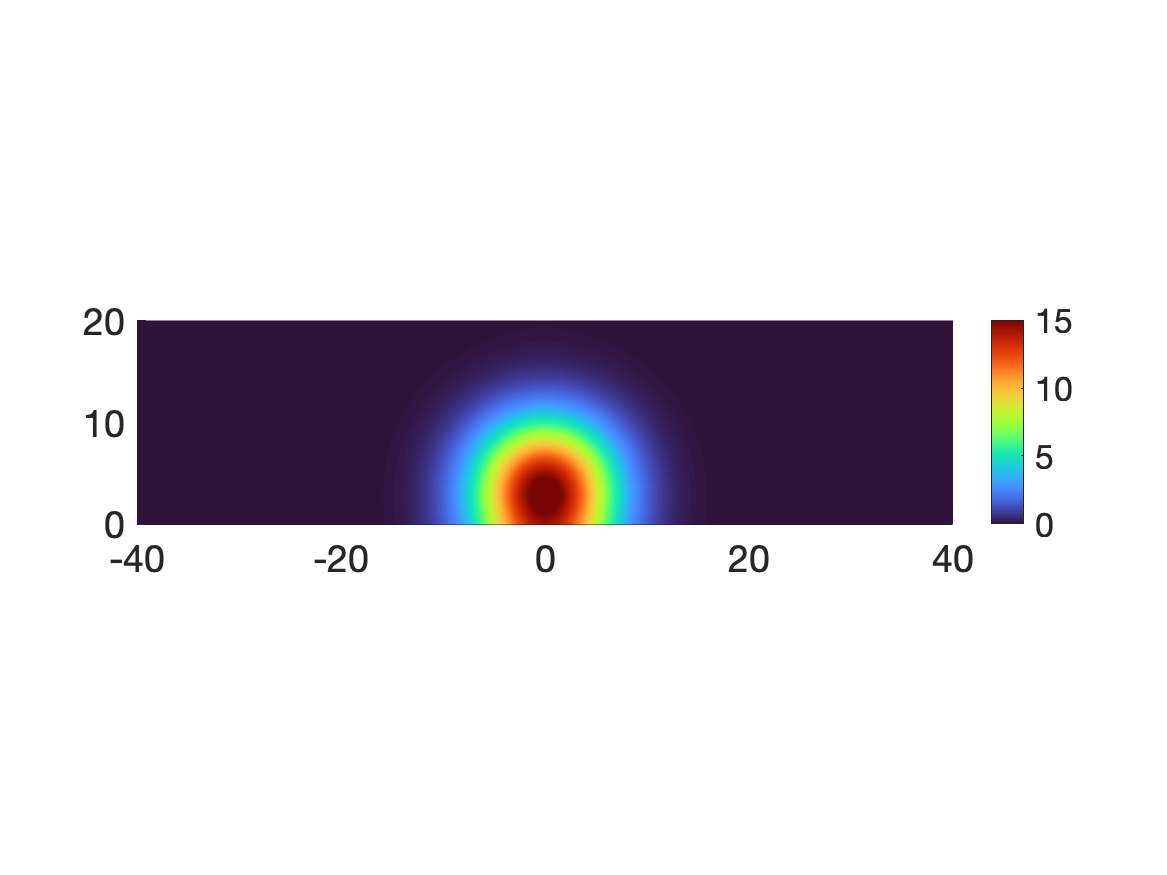}\\[-1.8cm]
\includegraphics[width=0.45\textwidth]{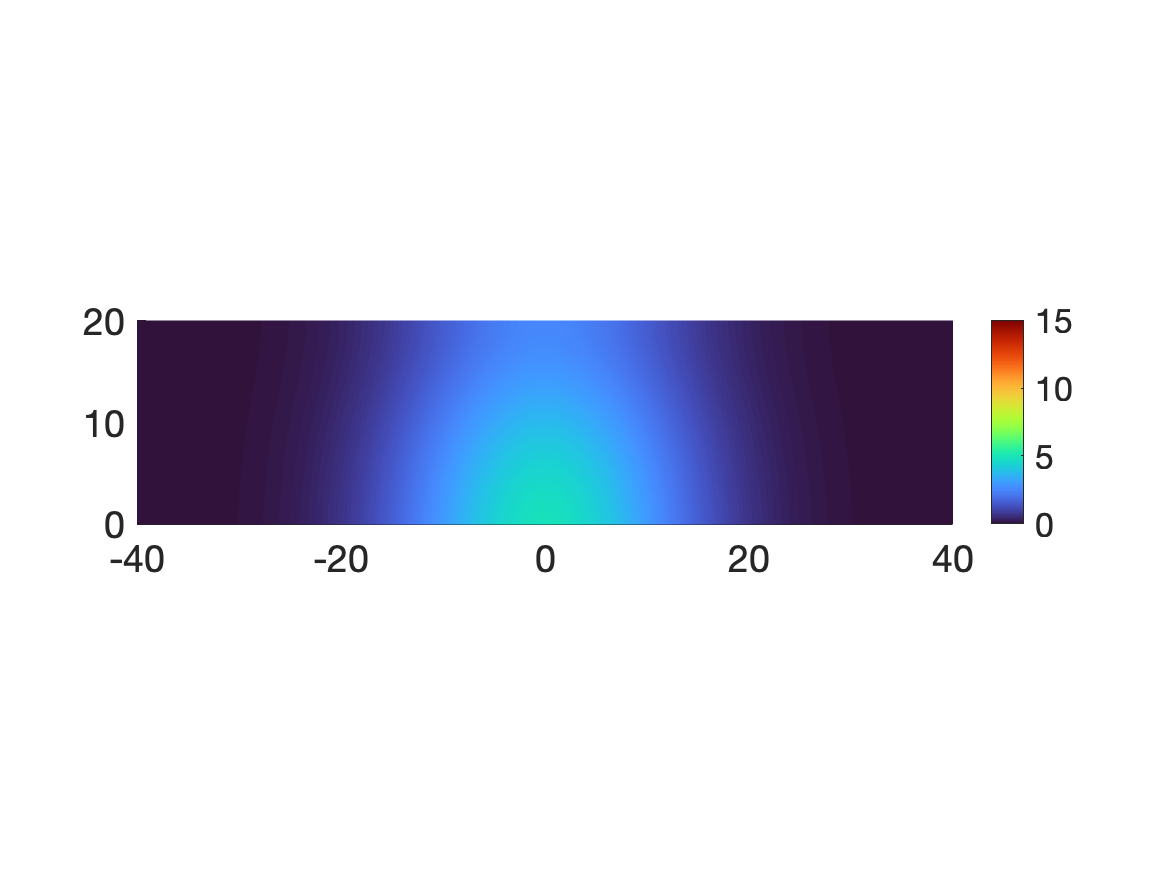}
 &\includegraphics[width=0.45\textwidth]{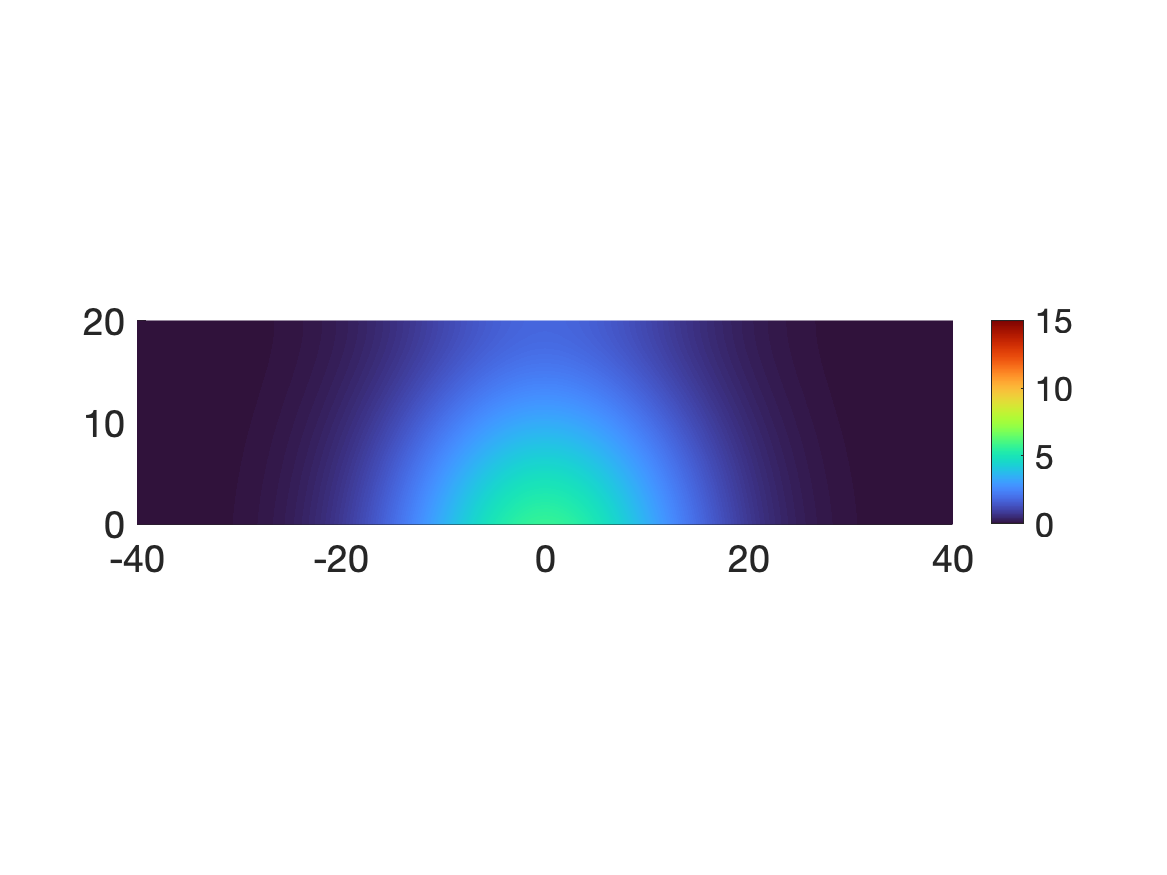}\\[-1.8cm]
\includegraphics[width=0.45\textwidth]{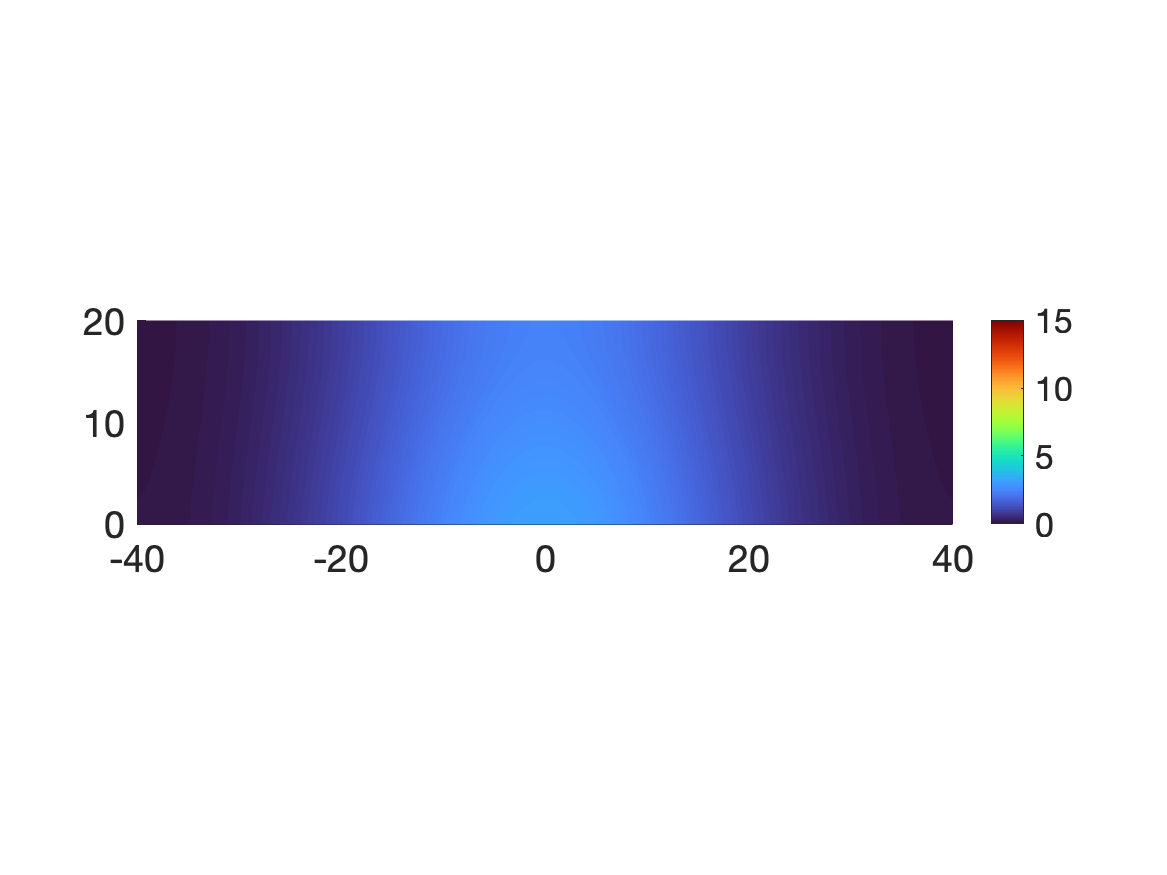}
 &\includegraphics[width=0.45\textwidth]{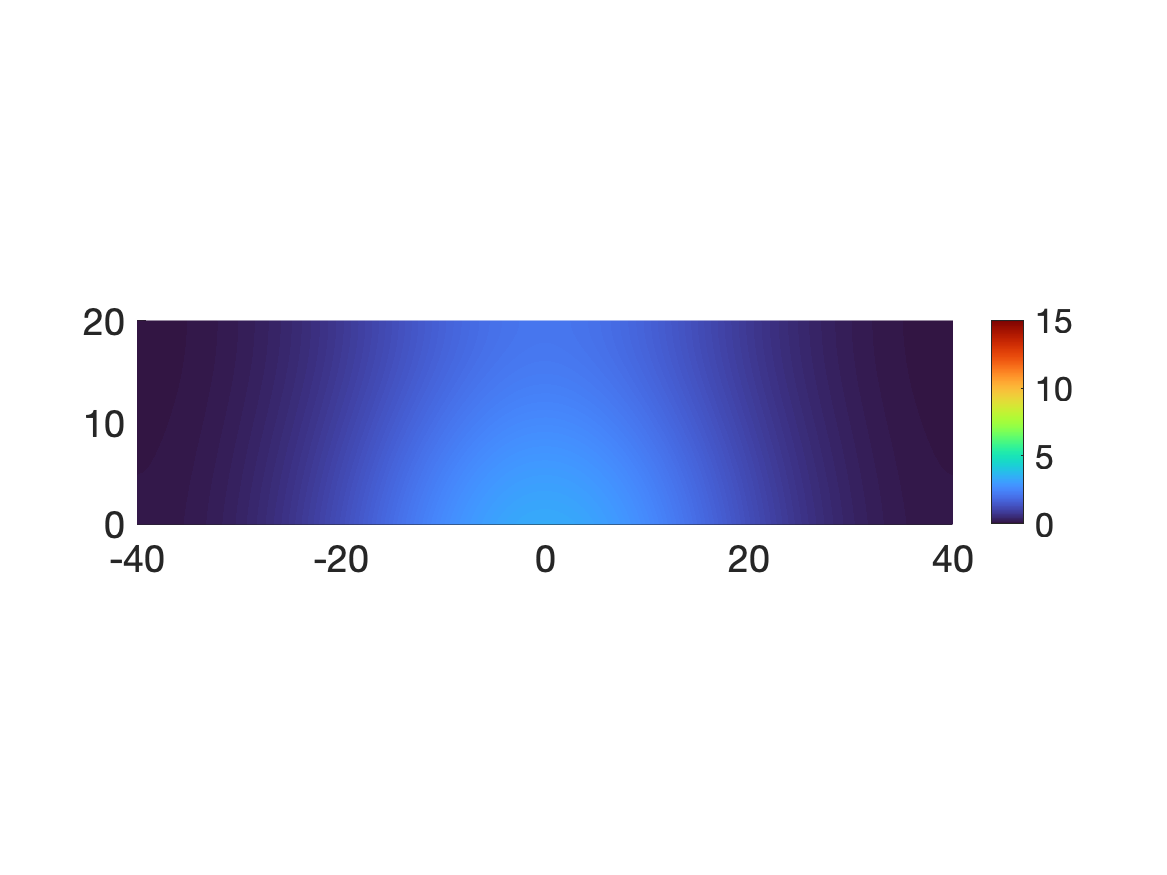}\\[-2.cm]
\end{tabular}\end{center}
\caption{Profiles of $v$, density in the field, for Test Case 1 (on the left) and Test Case 2 (on the right) at different times : $T=1$, $T=10$, $T=50$, $T=100$ (from the top to the bottom).}
\label{fig:CT12_champ}
\end{figure}

\begin{figure}[!htb]
\begin{center}
\begin{tabular}{cc}
 \includegraphics[width=0.35\textwidth]{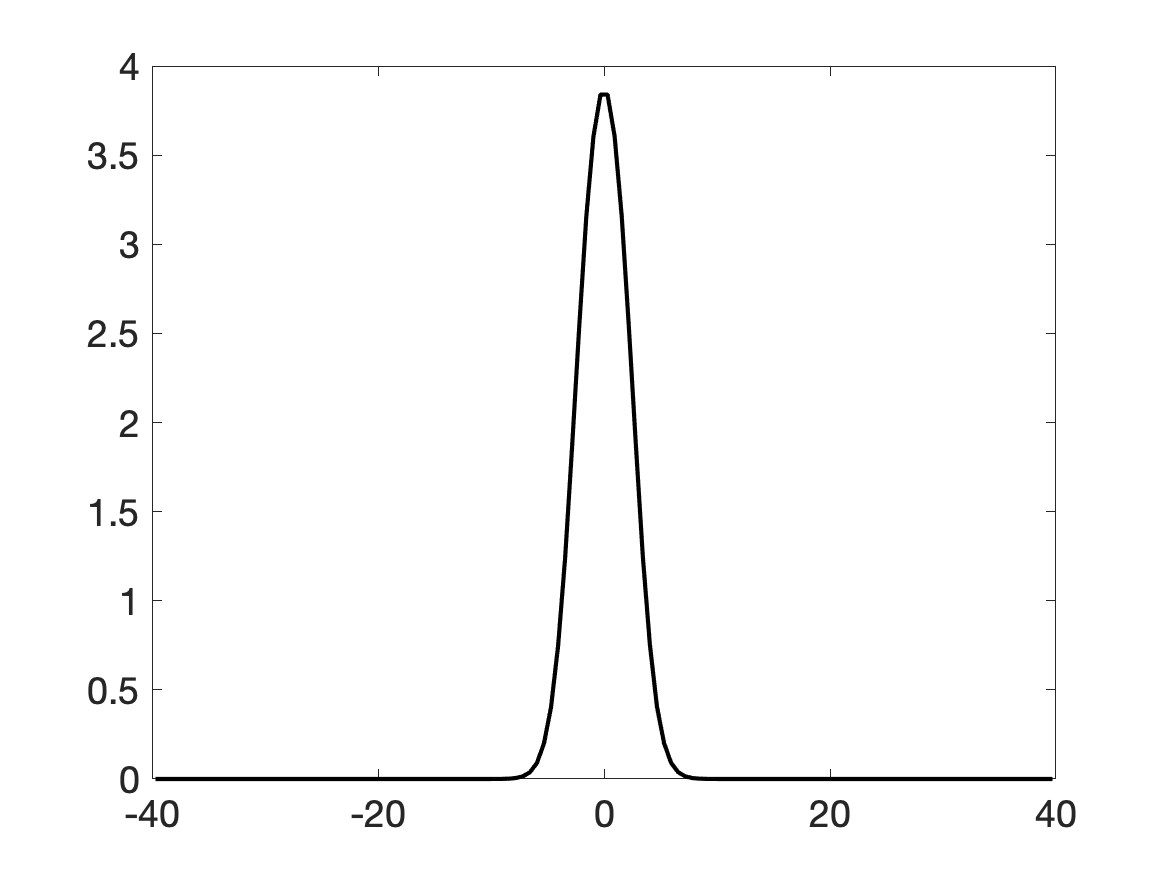}
 &\includegraphics[width=0.35\textwidth]{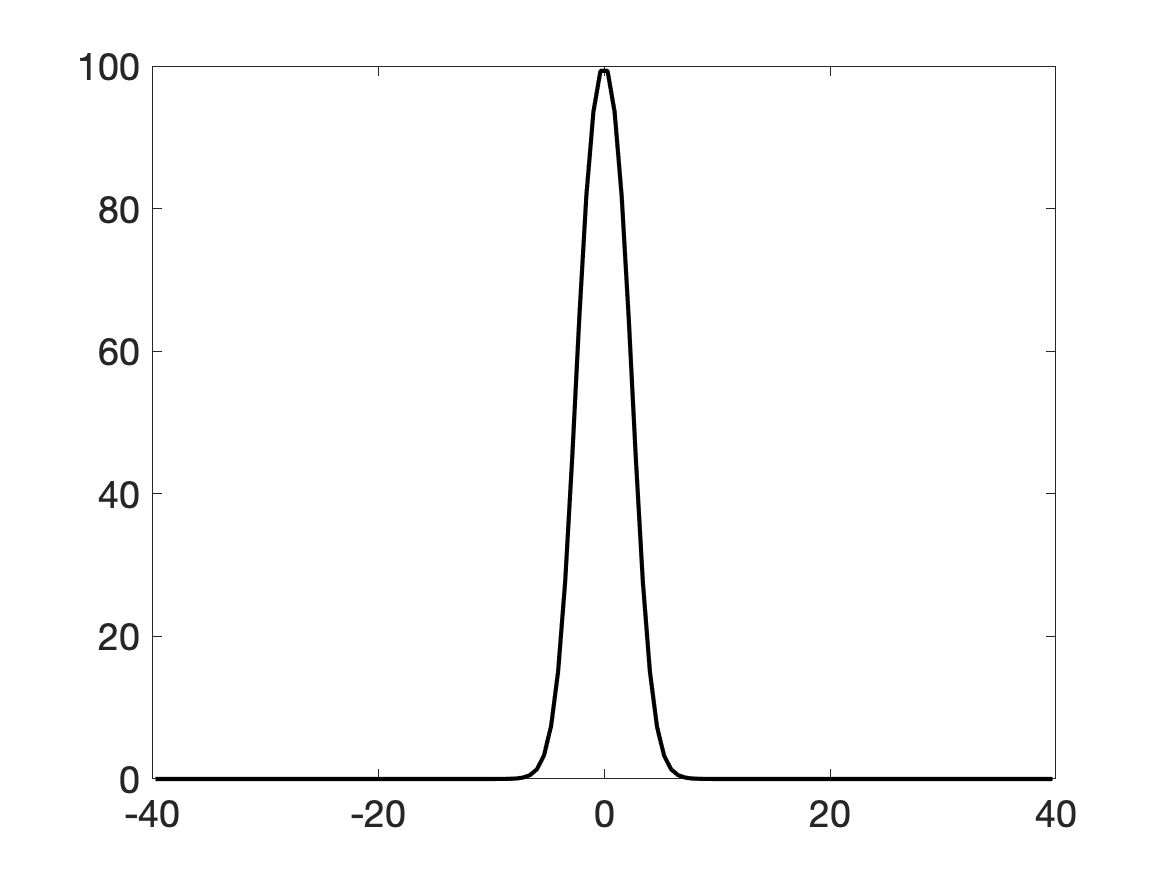}\\
 \includegraphics[width=0.35\textwidth]{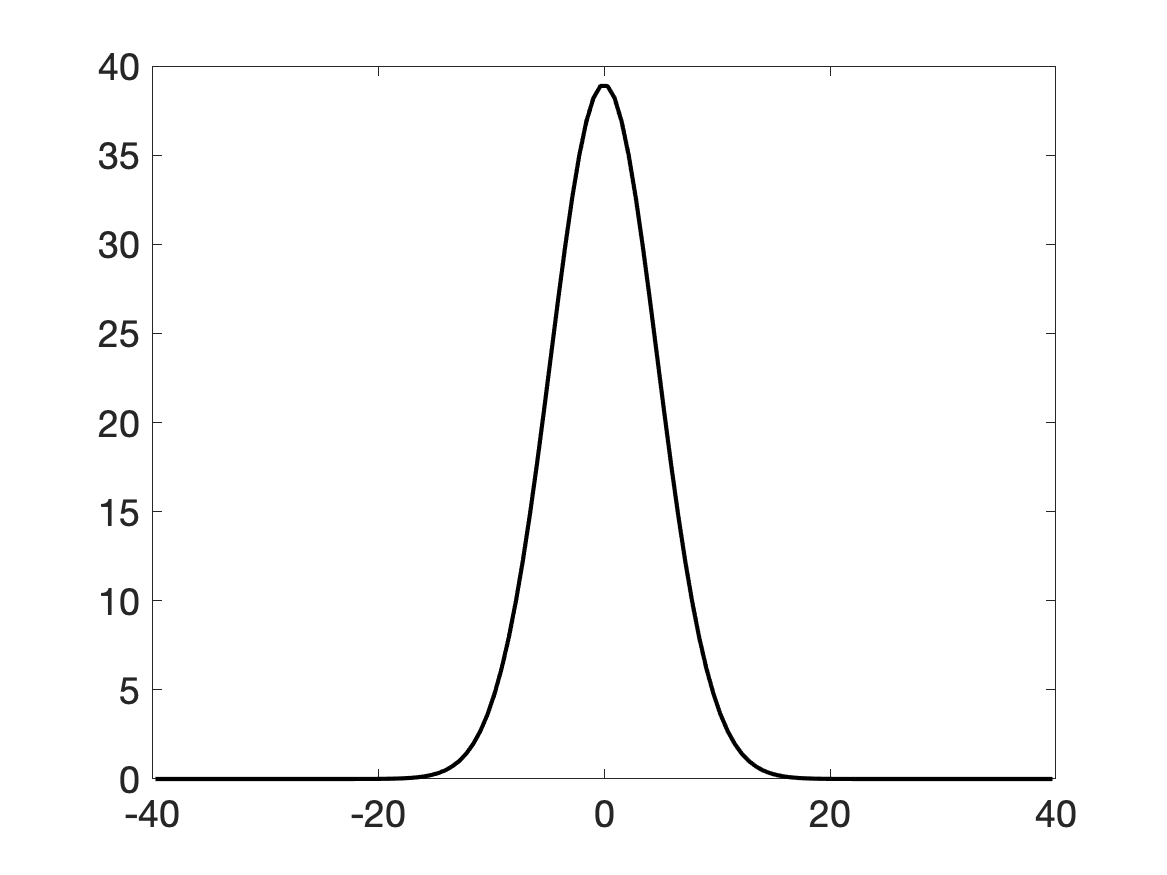}
 &\includegraphics[width=0.35\textwidth]{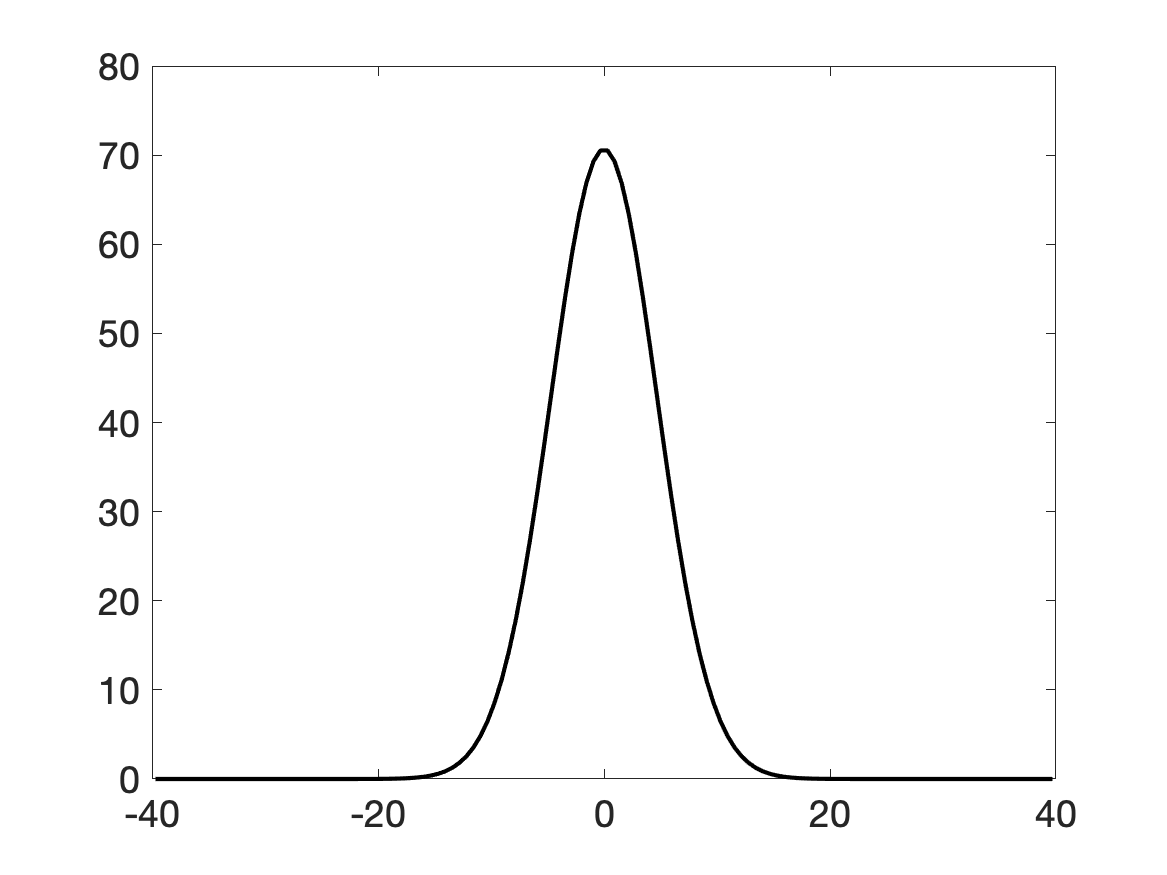}\\
\includegraphics[width=0.35\textwidth]{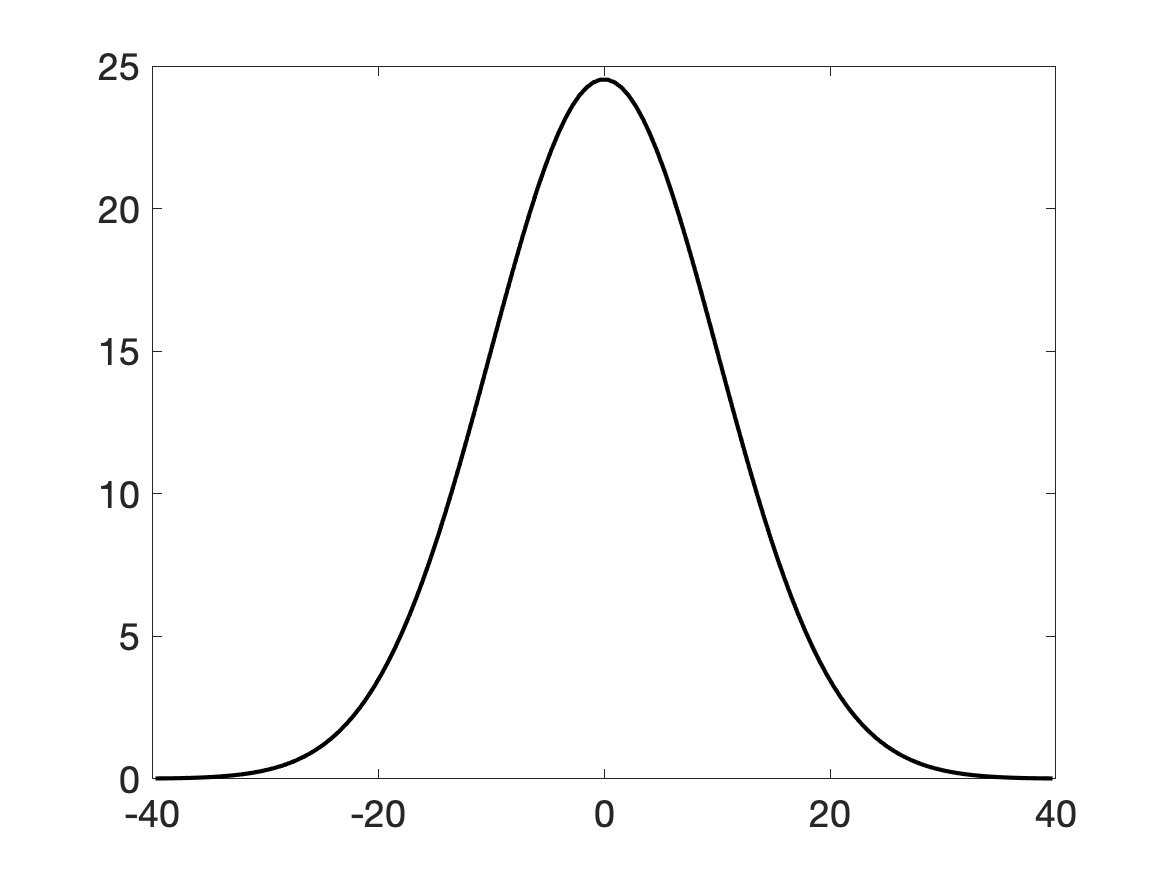}
 &\includegraphics[width=0.35\textwidth]{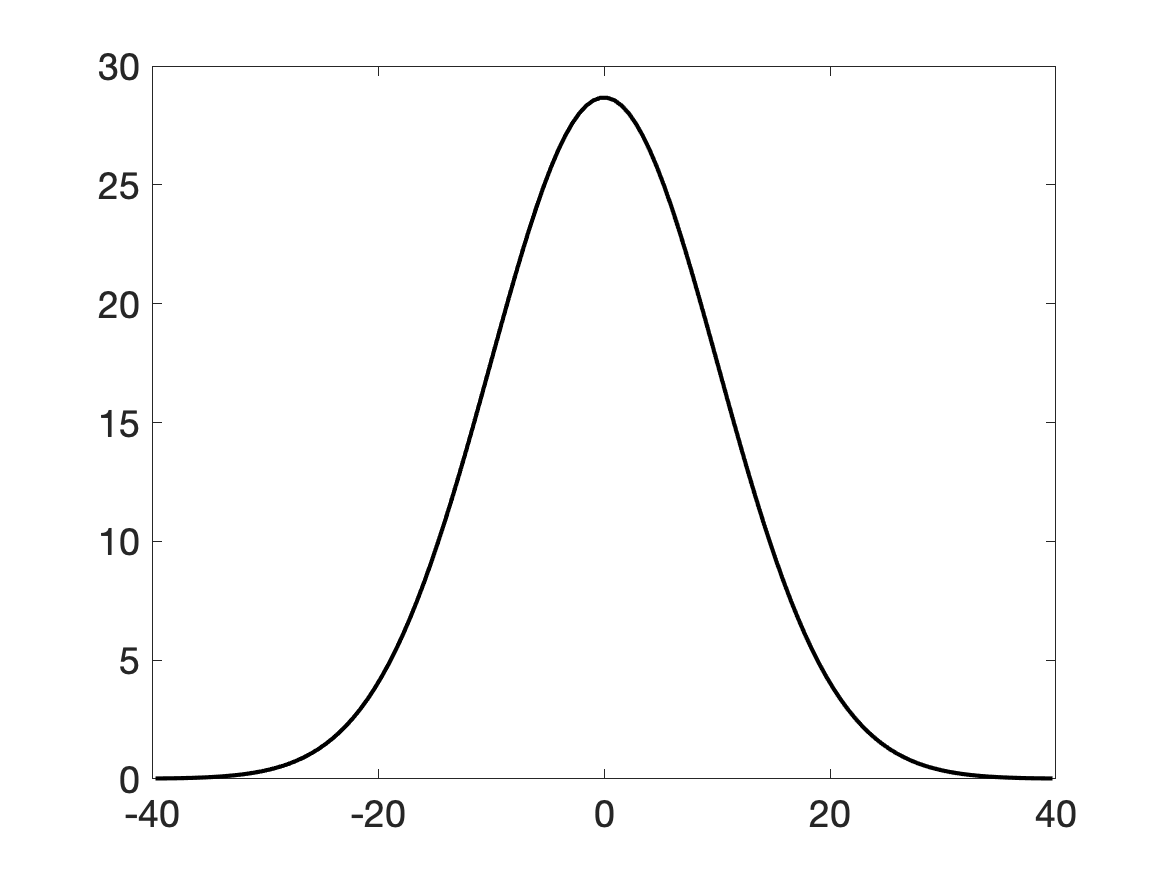}\\
\includegraphics[width=0.35\textwidth]{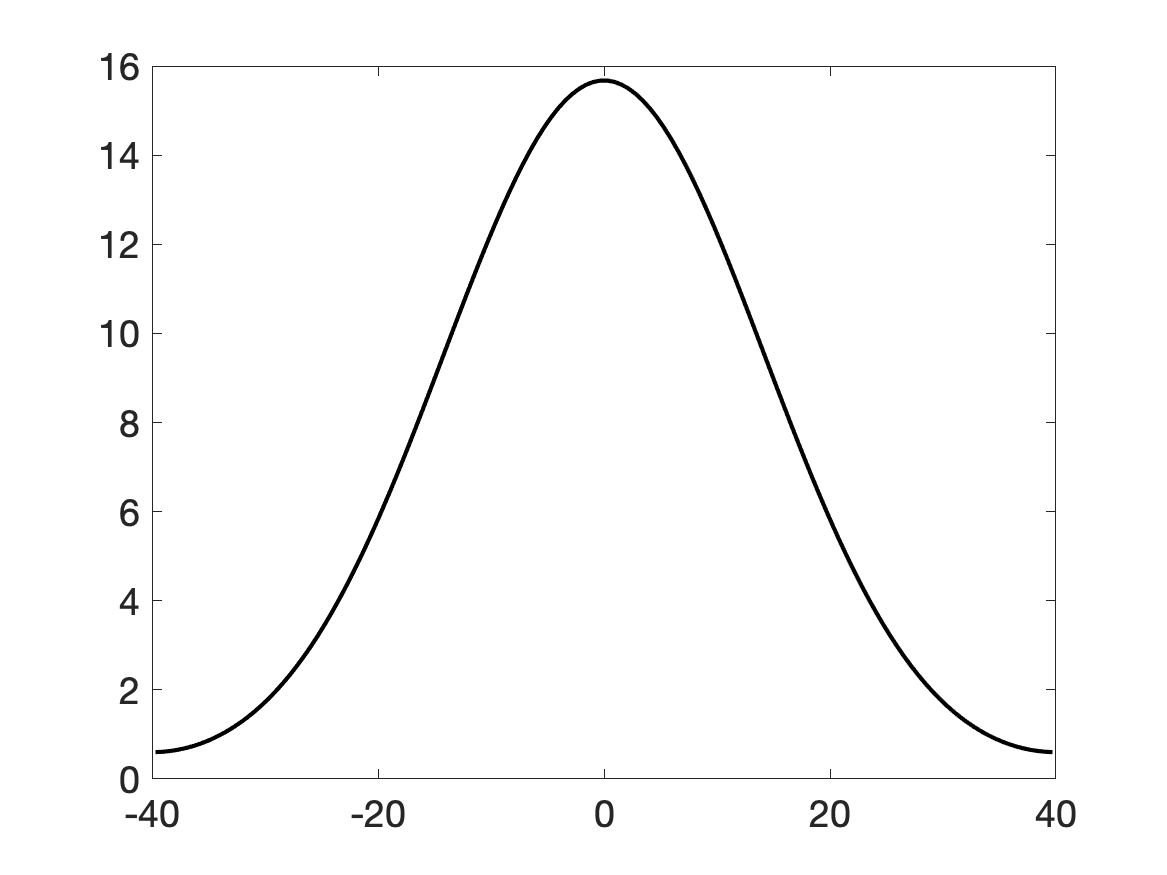}
 &\includegraphics[width=0.35\textwidth]{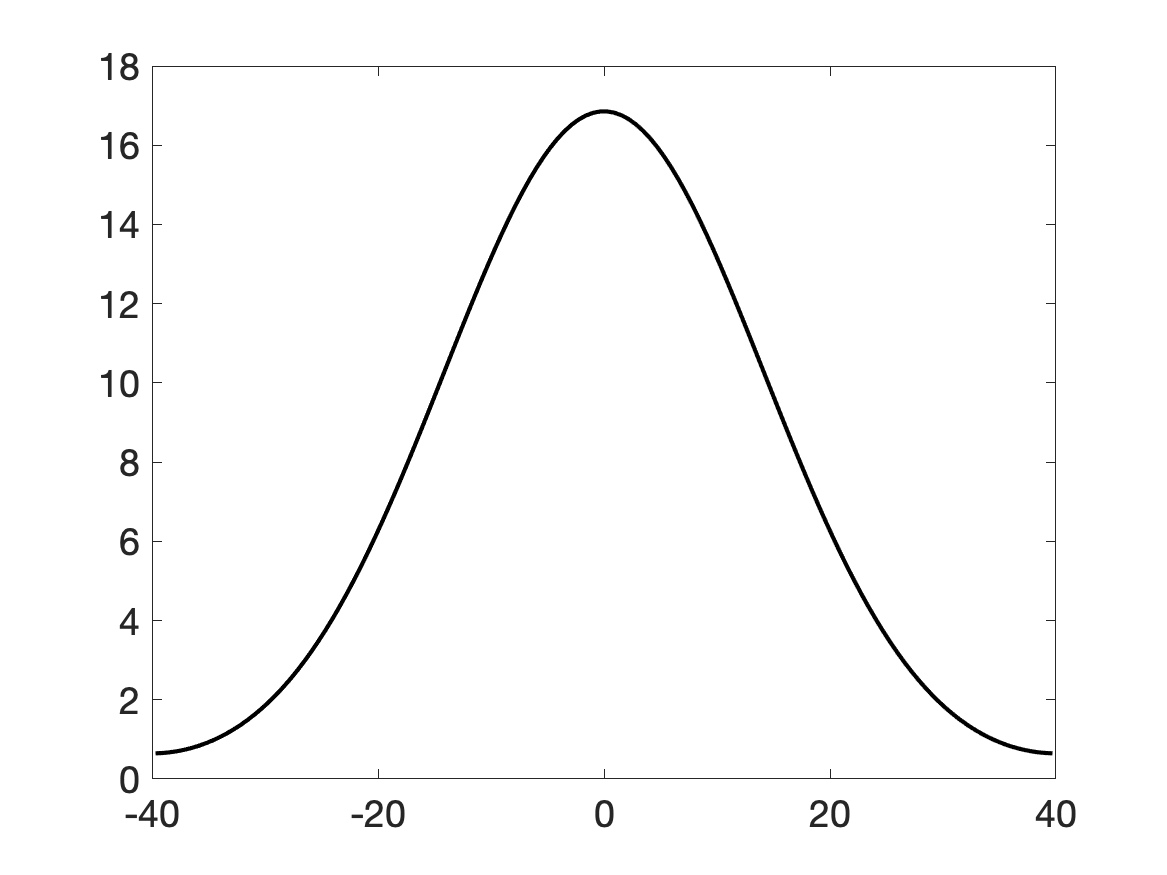}\\
Density on the road & Density on the road\\
\end{tabular}\end{center}
\caption{Profiles of $u$, density on the road, for Test Case 1 (on the left) and Test Case 2 (on the right) at different times : $T=1$, $T=10$, $T=50$, $T=100$ (from the top to the bottom).}
\label{fig:CT12_route}
\end{figure}

\begin{figure}[!htb]
\begin{center}
\begin{tabular}{cc}
 \includegraphics[width=0.45\textwidth]{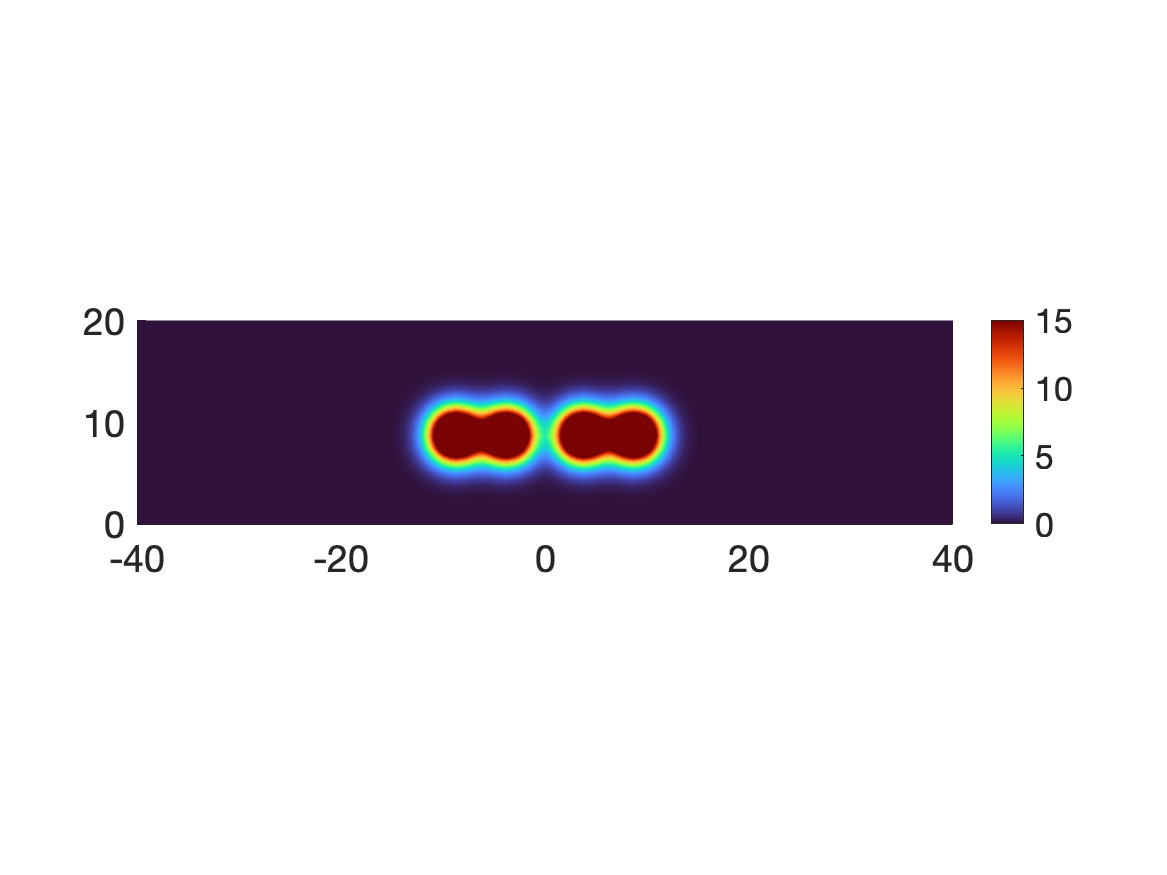}
 &\includegraphics[width=0.45\textwidth]{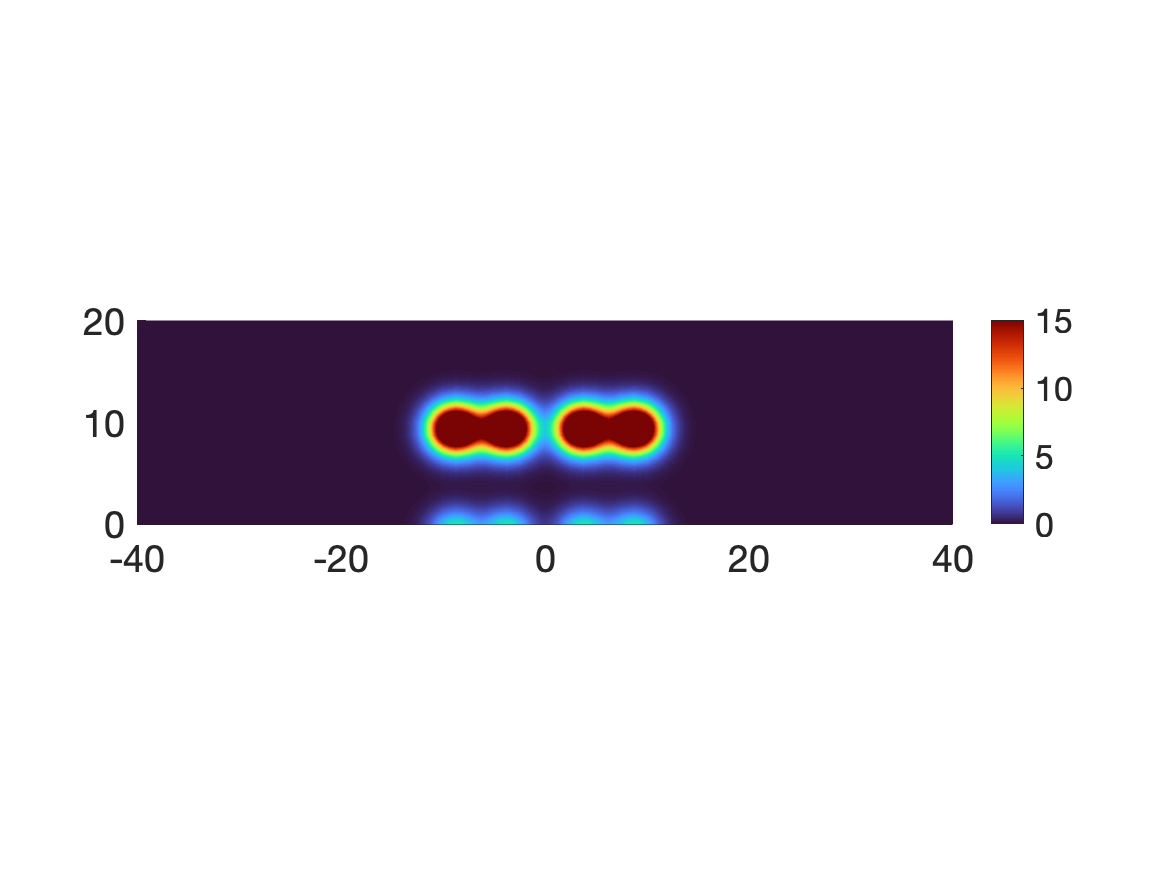}\\[-1.8cm]
 \includegraphics[width=0.45\textwidth]{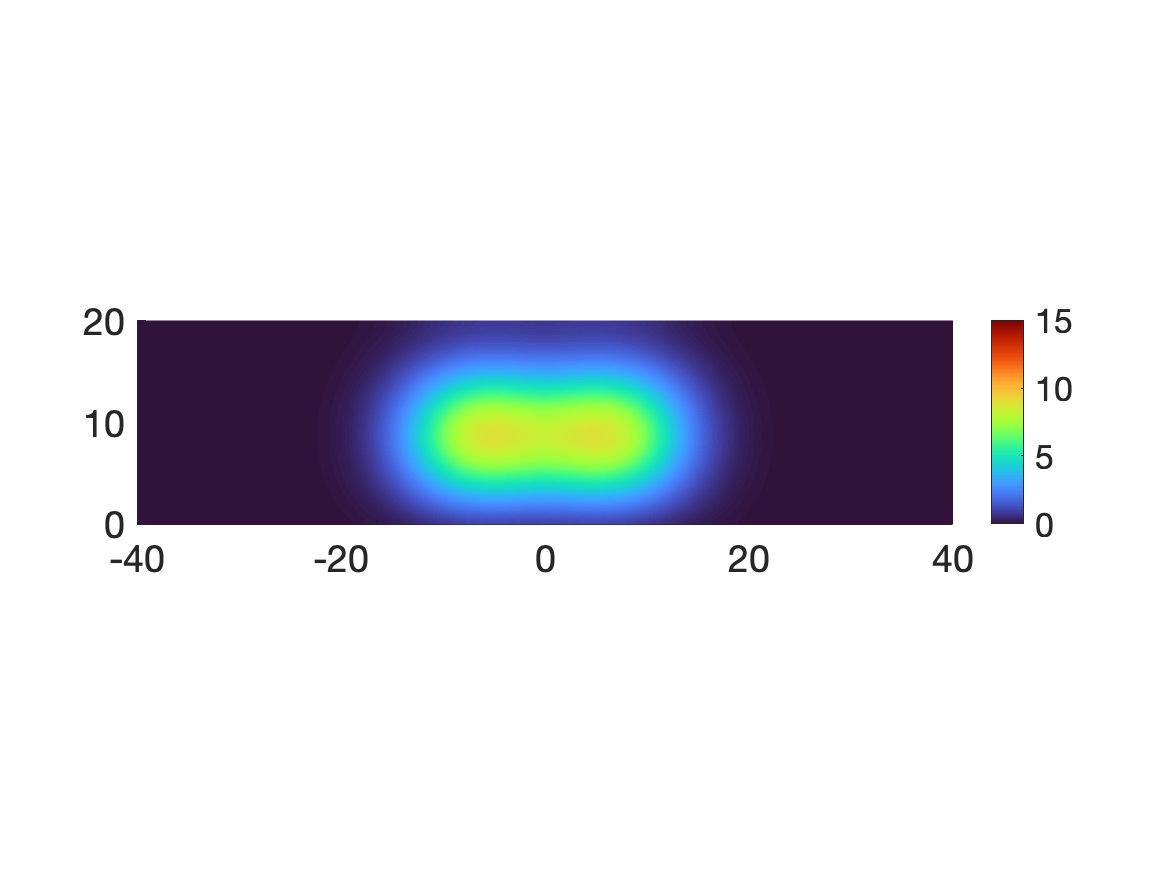}
 &\includegraphics[width=0.45\textwidth]{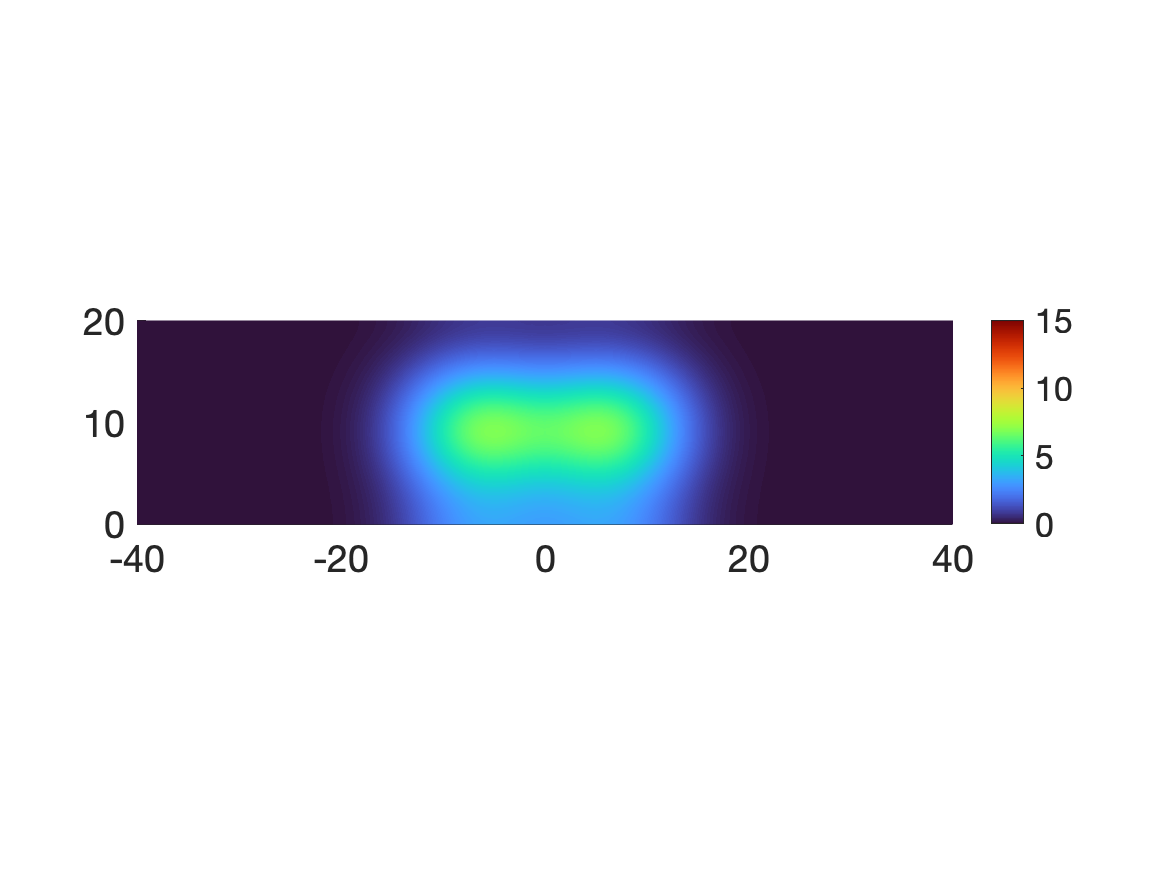}\\[-1.8cm]
\includegraphics[width=0.45\textwidth]{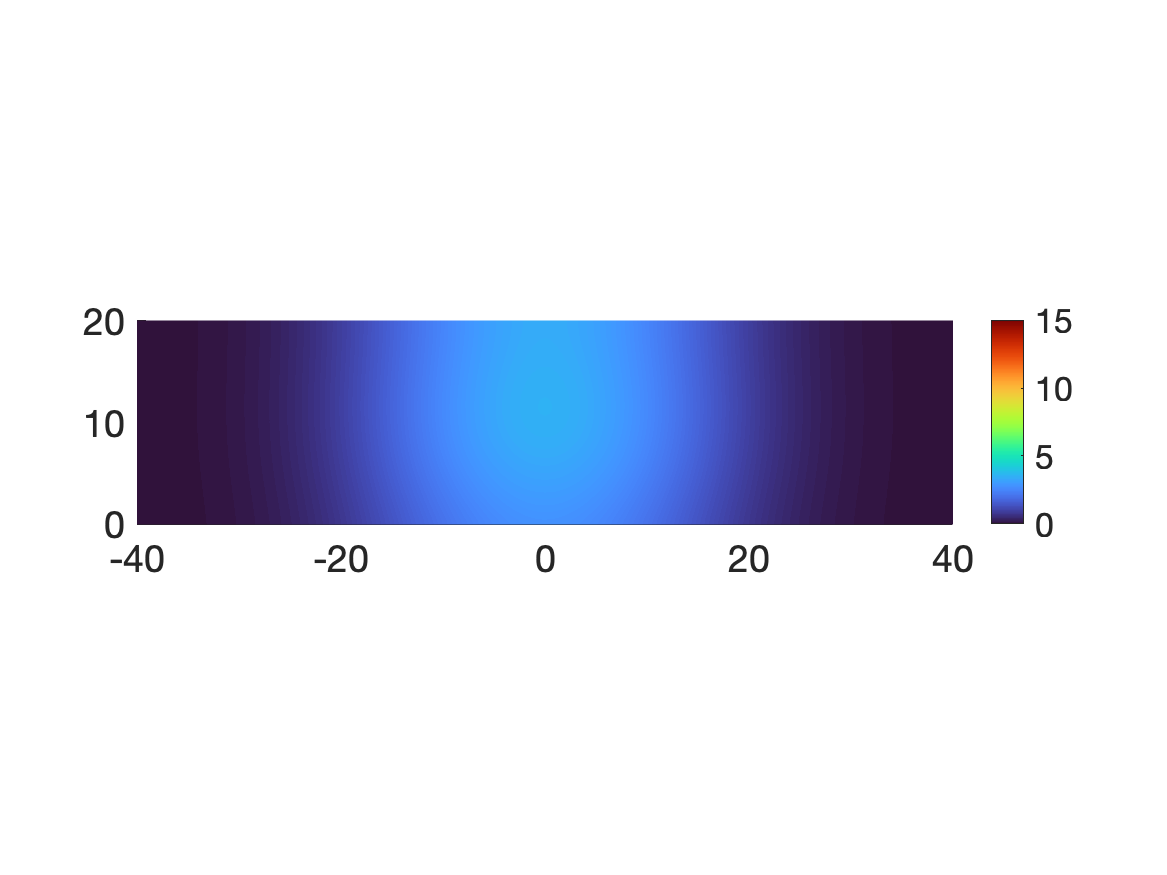}
 &\includegraphics[width=0.45\textwidth]{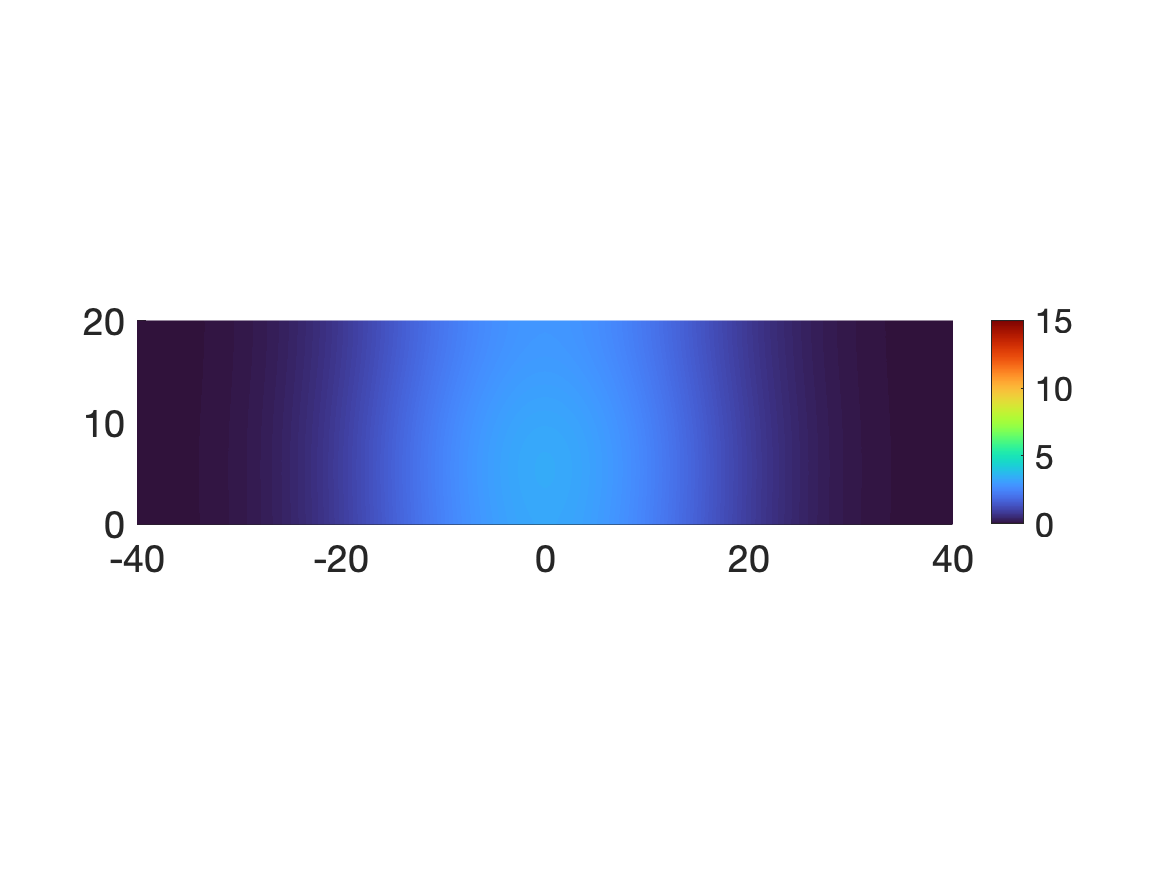}\\[-1.8cm]
\includegraphics[width=0.45\textwidth]{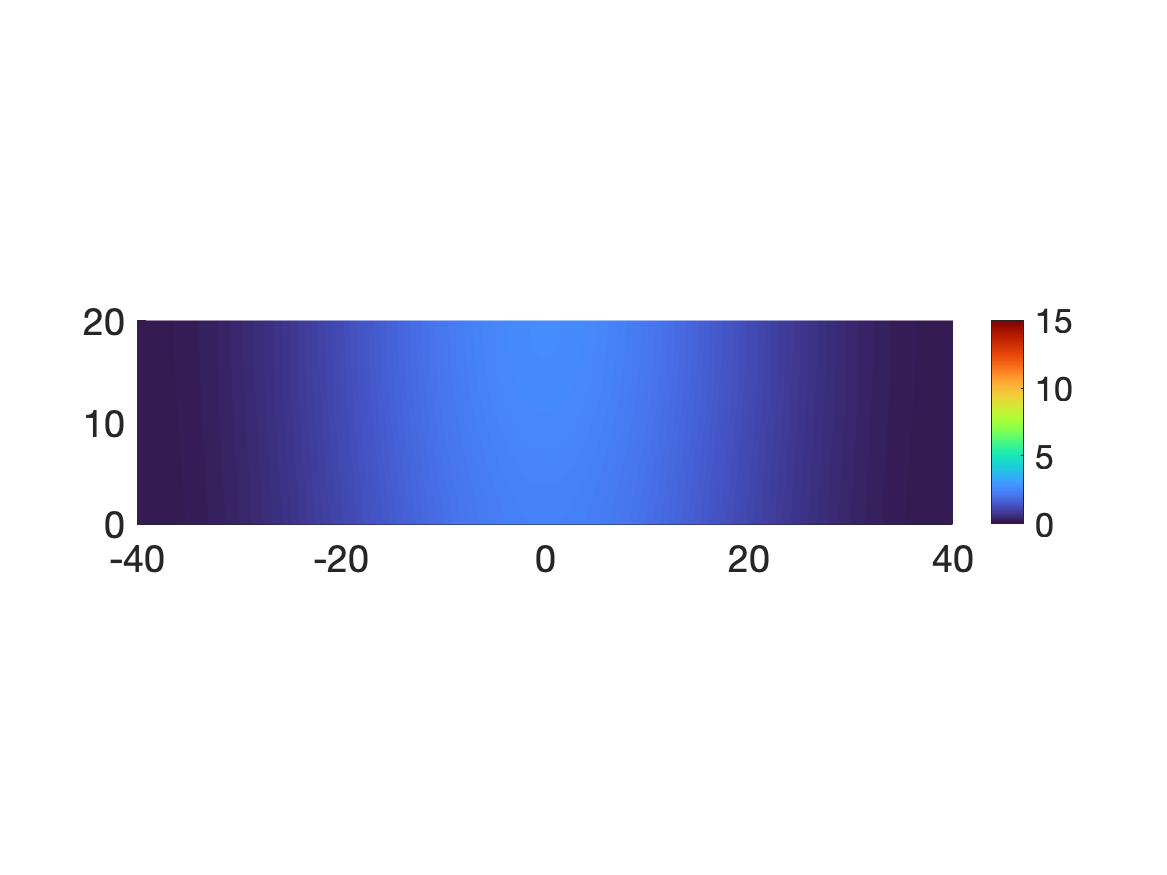}
 &\includegraphics[width=0.45\textwidth]{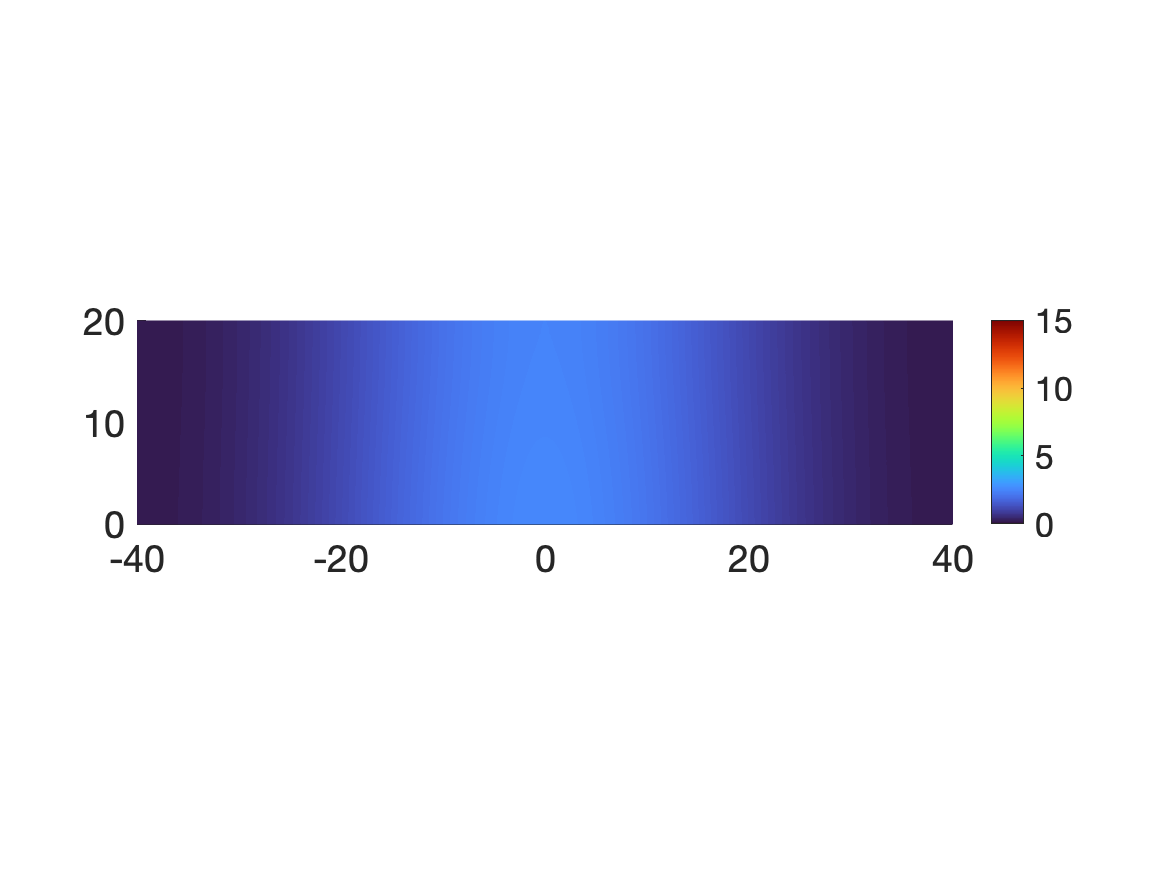}\\[-2.cm]
\end{tabular}\end{center}
\caption{Profiles of $v$, density in the field, for Test Case 3 (on the left) and Test Case 4 (on the right) at different times : $T=1$, $T=10$, $T=50$, $T=100$ (from the top to the bottom).}
\label{fig:CT34_champ}
\end{figure}

\begin{figure}[!htb]
\begin{center}
\begin{tabular}{cc}
 \includegraphics[width=0.35\textwidth]{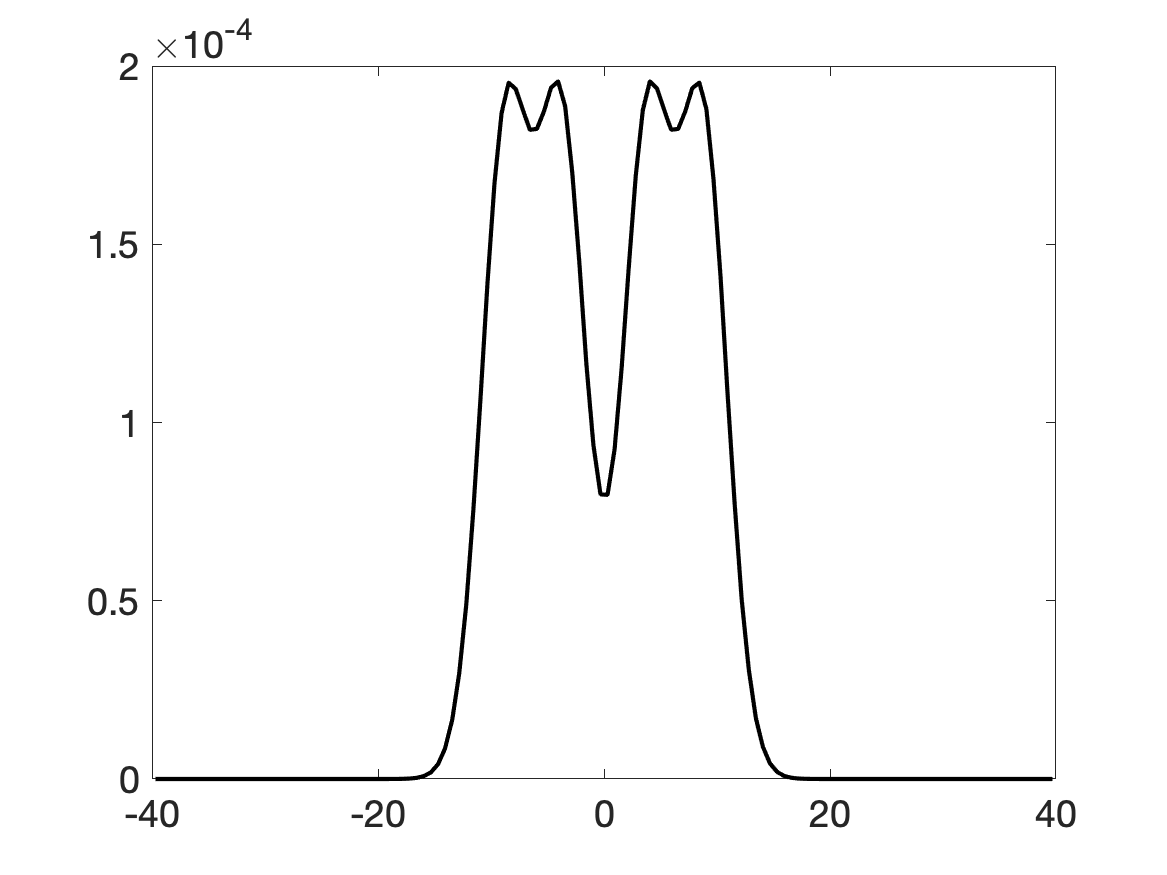}
 &\includegraphics[width=0.35\textwidth]{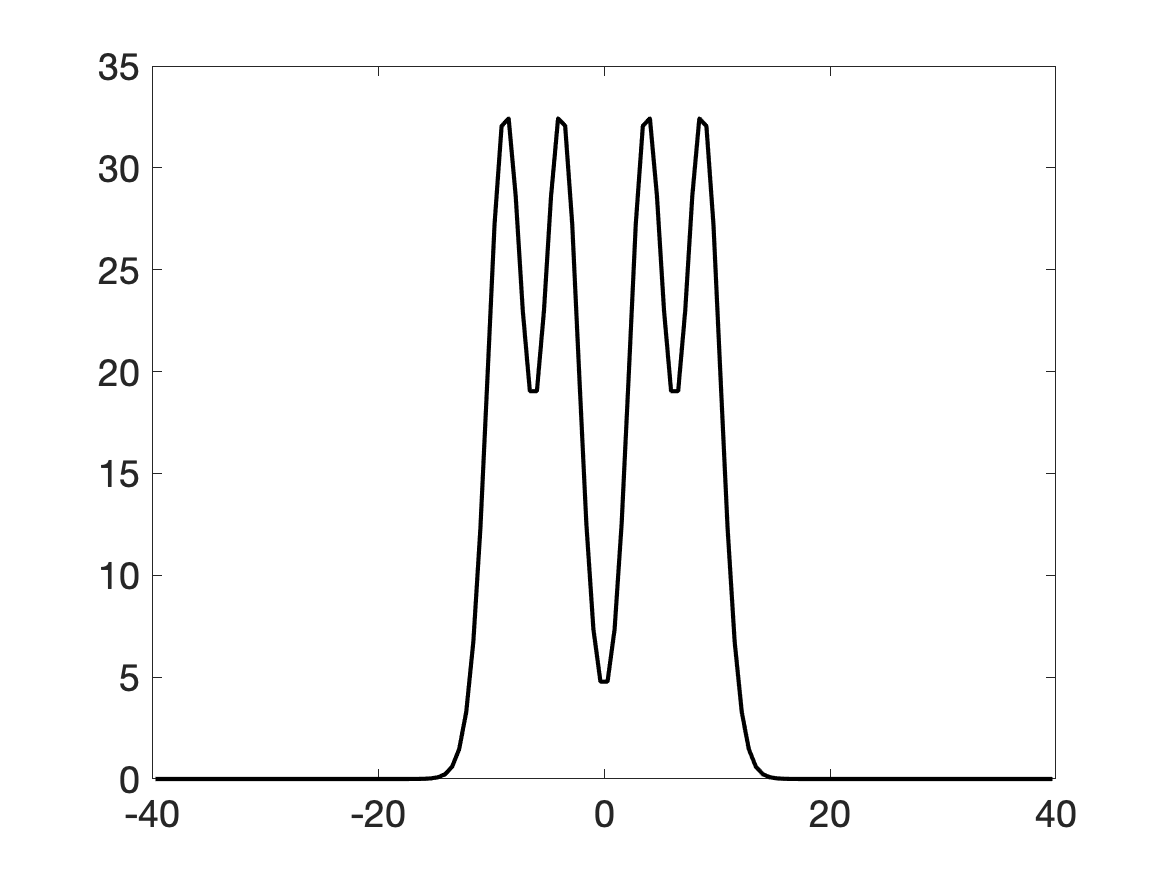}\\
 \includegraphics[width=0.35\textwidth]{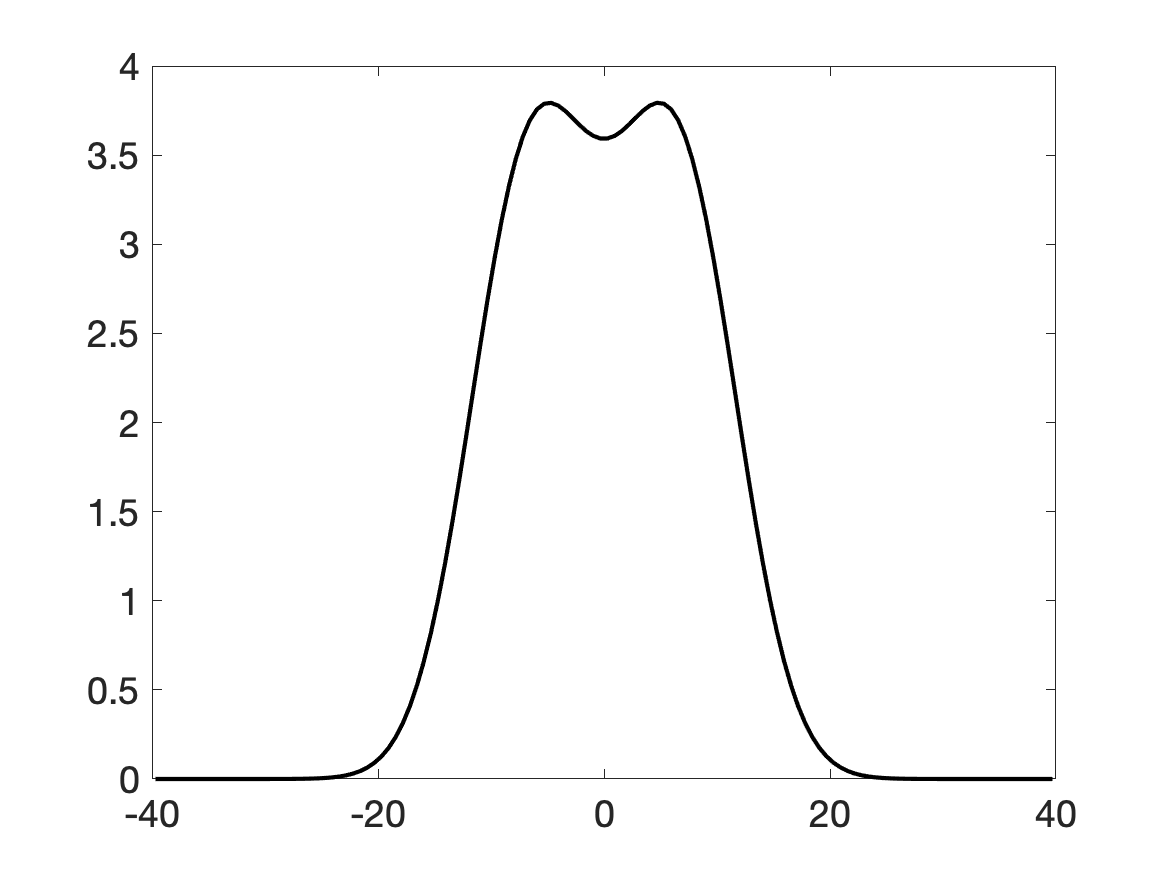}
 &\includegraphics[width=0.35\textwidth]{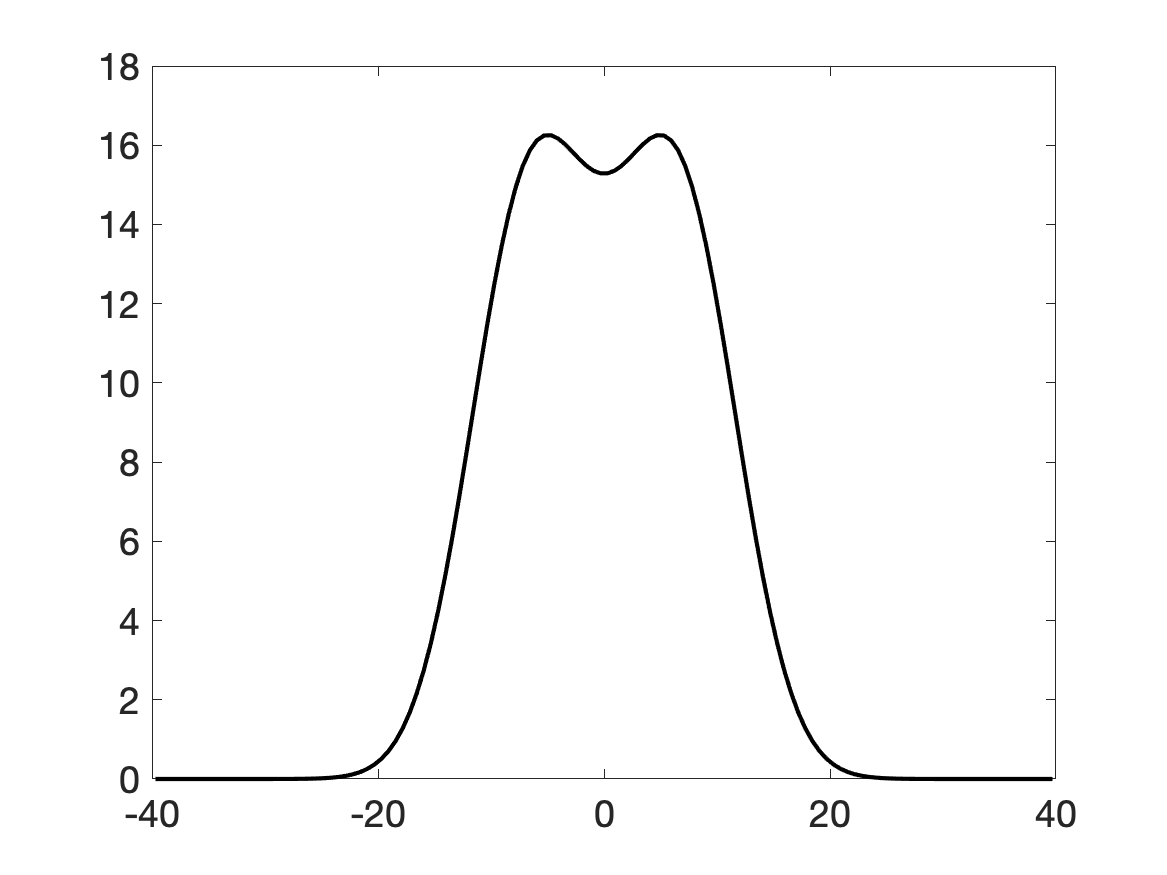}\\
\includegraphics[width=0.35\textwidth]{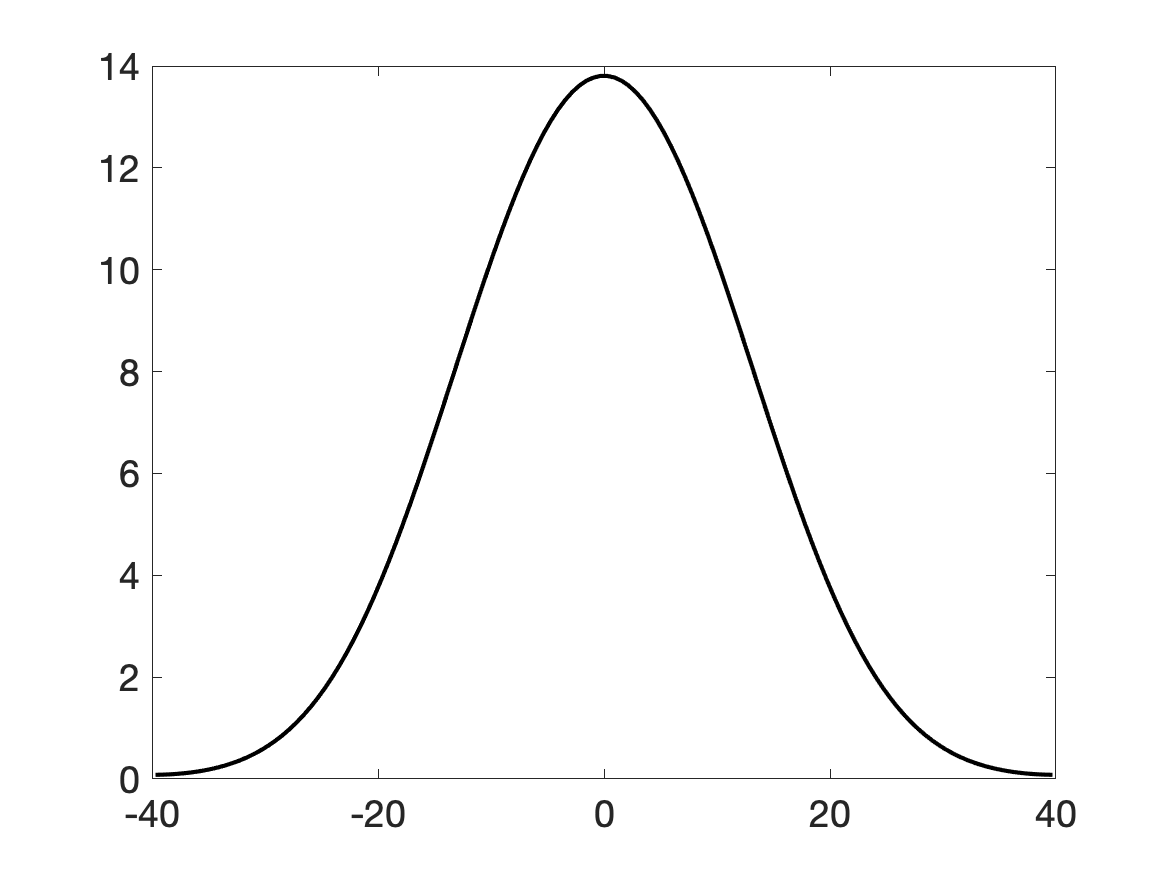}
 &\includegraphics[width=0.35\textwidth]{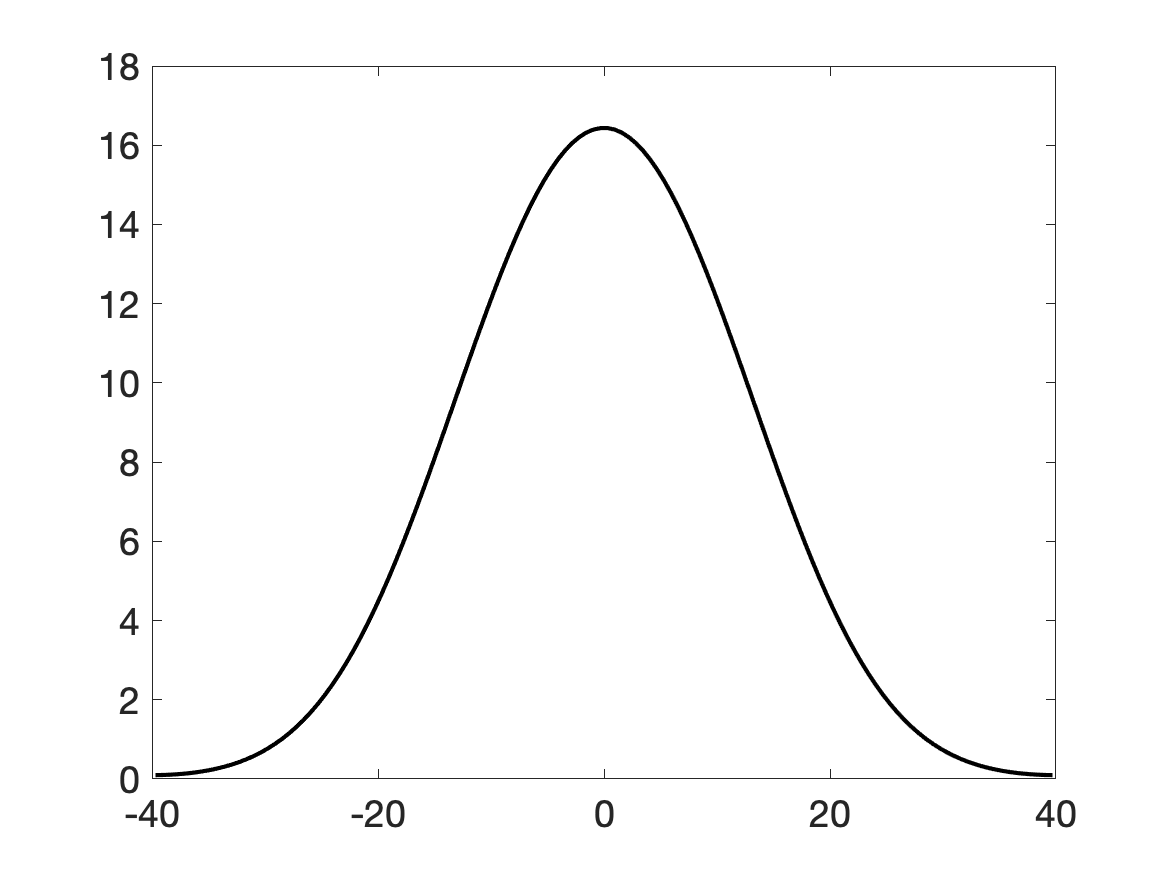}\\
\includegraphics[width=0.35\textwidth]{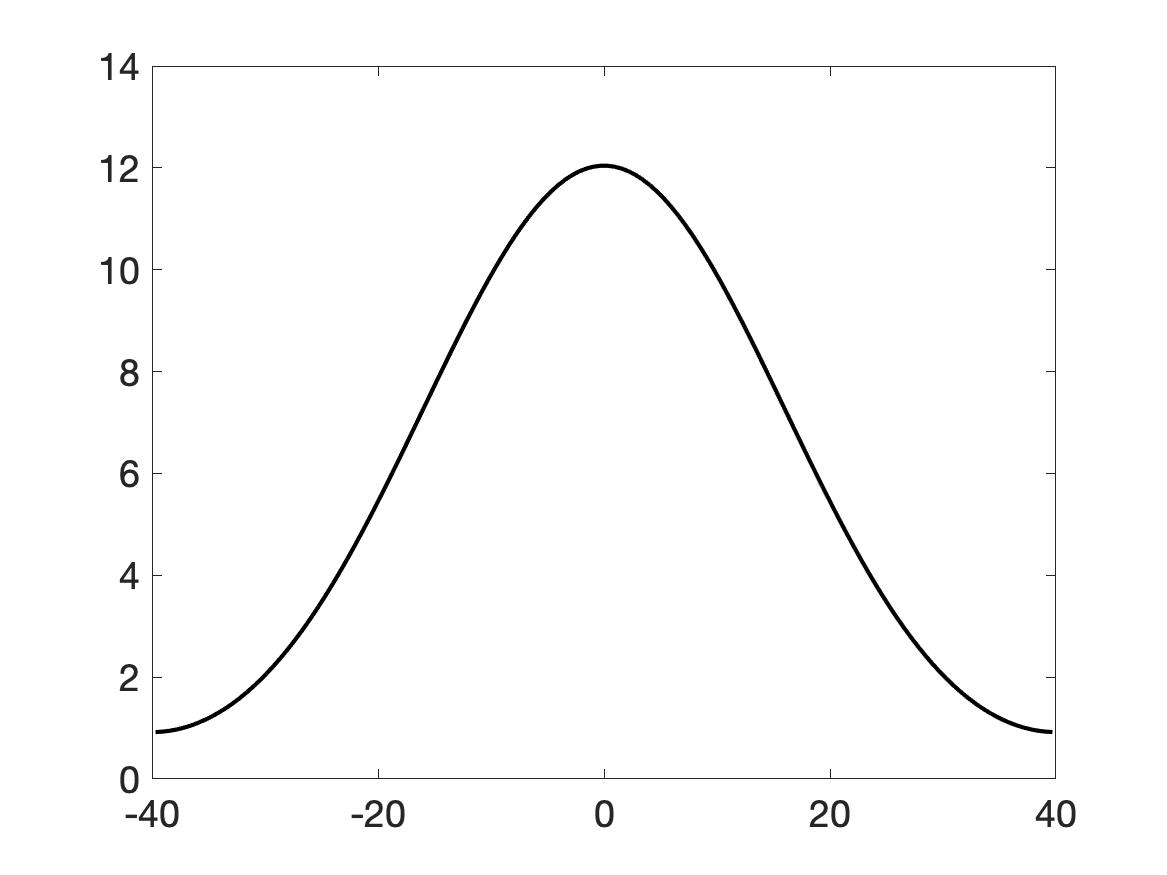}
 &\includegraphics[width=0.35\textwidth]{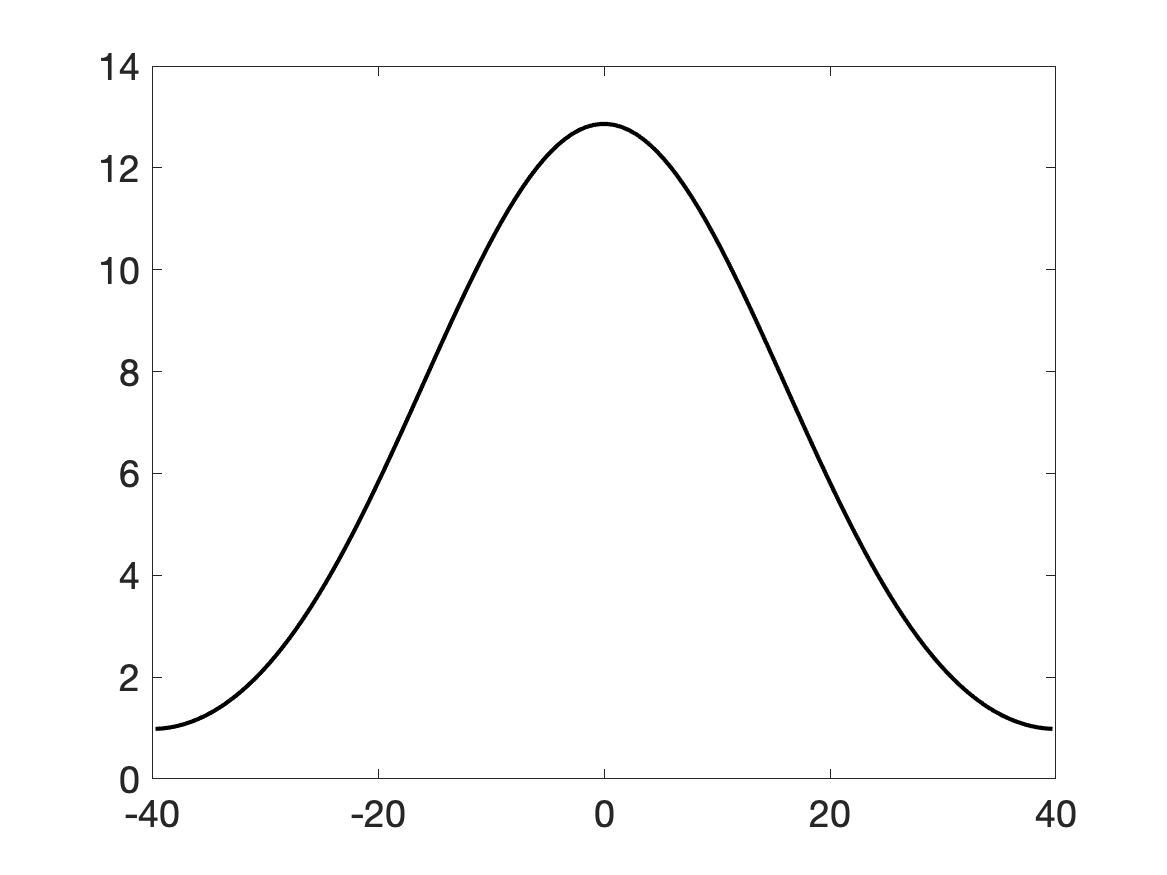}\\
Density on the road & Density on the road\\
\end{tabular}\end{center}
\caption{Profiles of $u$, density on the road, for Test Case 3 (on the left) and Test Case 4 (on the right) at different times : $T=1$, $T=10$, $T=50$, $T=100$ (from the top to the bottom).}
\label{fig:CT34_route}
\end{figure}


\begin{figure}[!htb]
\begin{center}
\includegraphics[width=0.5\textwidth]{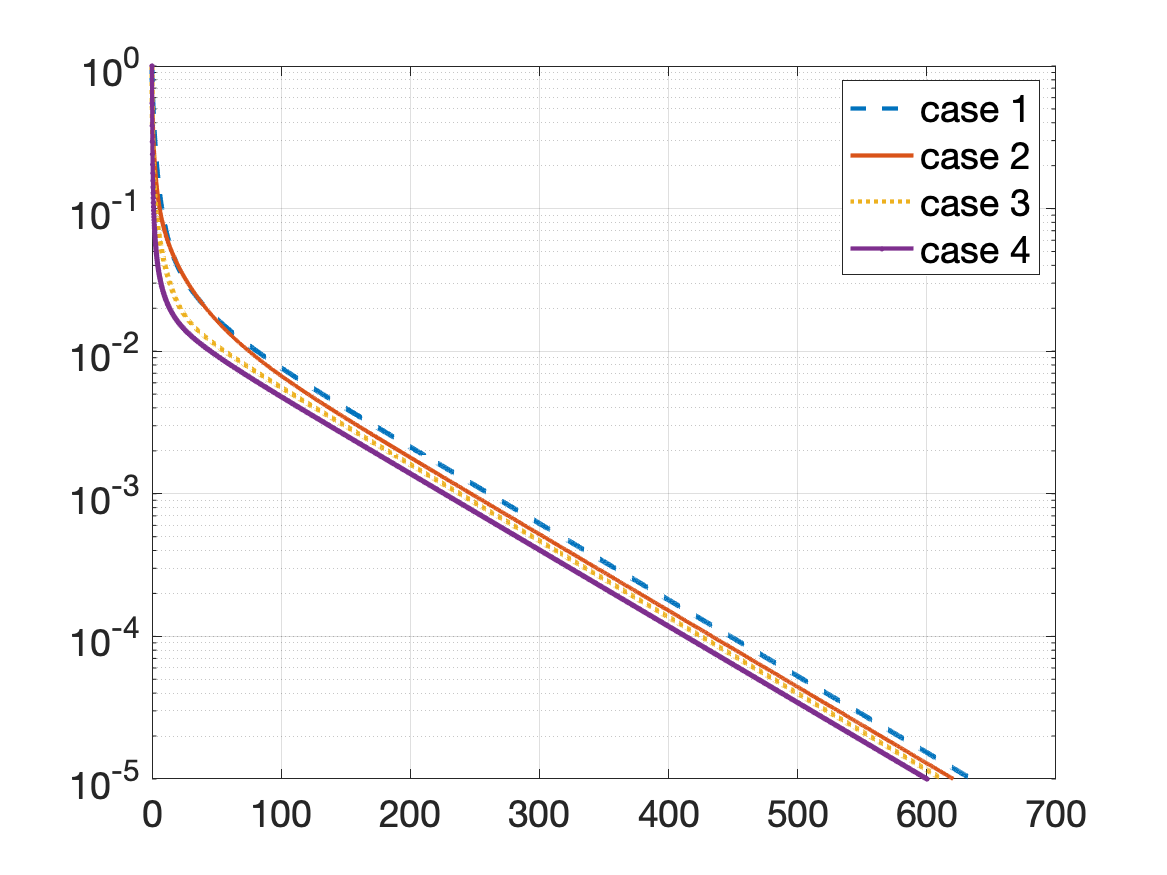}
\end{center}
\caption{Evolution in time of the relative entropies (divided by the value at the first time step) for the four test cases.}\label{fig:CT1to4_entropies}
\end{figure}

\subsection{Entropy decay rate as a function of the different parameters}\label{ss:entropy-decay-rate}

In the light of the four test cases in subsection \ref{ss:test}, we believe that the founding population, in particular its location and fragmentation, has an effect on the behavior for small times but not on the asymptotic decay rate. Therefore to study the latter, we  now restrict ourselves  to Test Case 1. 

We fix $\mu=1$ and $\nu=5$ as before. Next, we fix $d=1$, respectively $D=1$, and compute the decay rate $\Lambda_{num}$ as a function of $D>0$, respectively $d>0$.  The results are shown in figure \ref{fig:D-d}. They are obtained with a time step of $\Delta t = 10^{-1}$ and with a mesh of 3584, respectively  896, triangles for the dependence on $D$, respectively $d$. As the decay rate tends to $0$, we need a huge number of time steps to compute a relevant value, and the coarser the mesh, the faster it goes.

First we note that, as expected, the decay rate is increasing w.r.t. both $D>0$ and $d>0$. Also, we observe that $\Lambda_{num}(d=1,D)\in (9.50\cdot 10^{-3}, 2.51 \cdot 10^{-2})$ while $\Lambda_{num}(d,D=1)\in (0, 2.36)$. In view of these ranges of values taken by $\Lambda_{num}(d,D)$, $d$ seems to have a larger influence on the decay rate than $D$.

Next, we compare the numerically computed $\Lambda_{num}=\Lambda_{num}(d,D)$ with the $\Lambda_2=\Lambda_2(d,D)$ provided by \eqref{Lambda-2}. Obviously,  \eqref{Lambda-2} is recast
$$
\Lambda_2(d,D)=\left\{
\begin{array}{ll}
\min\left(c_1; c_2 D\right) &  \mbox{ if } d=1,\vspace{3pt} \\
\min\left(c_3 d; c_4\right)  &  \mbox{ if } D=1,\end{array}
\right.
$$
for some $c_i>0$ ($1\leq i \leq 4$).

First, we consider the case of large diffusion coefficients. Both 
 $\Lambda_2(1,+\infty)$ and $\Lambda_2(+\infty,1)$ are positive constants, and so are $\Lambda_{num}(1,+\infty)$ and $\Lambda_{num}(+\infty,1)$, which is qualitatively satisfactory. 

Next, we consider the case of small diffusion coefficients.
We observe that $\Lambda_{num}(d,1)\to 0$ as $d\to 0$, which was already expected since $\Lambda_2(d,D)\to 0$ as $d\to 0$.  The reason is that, if $0<d \ll 1$, individuals will very slowly invade the whole field (in particular the zone far from the road). More interestingly, we observe that $\Lambda_{num}(1,D)$ tends to a nonzero value as $D\to 0$. This can be understood as follows: even if $0<D\ll 1$, individuals will be able to invade the whole road via a combination of \lq\lq invasion of the field'' and \lq\lq exchange terms''. However, notice that $\Lambda_2(d,D)\to 0$ as $D\to 0$, revealing that our analysis is far from optimal in the regime $0<D\ll 1$. 

This sheds light on the (different) roles of $d$ and $D$. Also the singular limit problem $D\to 0$ would deserve further investigations.

\begin{figure}[!htb]
\begin{center}
\begin{tabular}{cccc}
\rotatebox{90}{\hspace{2.25cm}$\Lambda_{num}$}\hspace*{-0.5cm}&\includegraphics[width=0.45\textwidth]{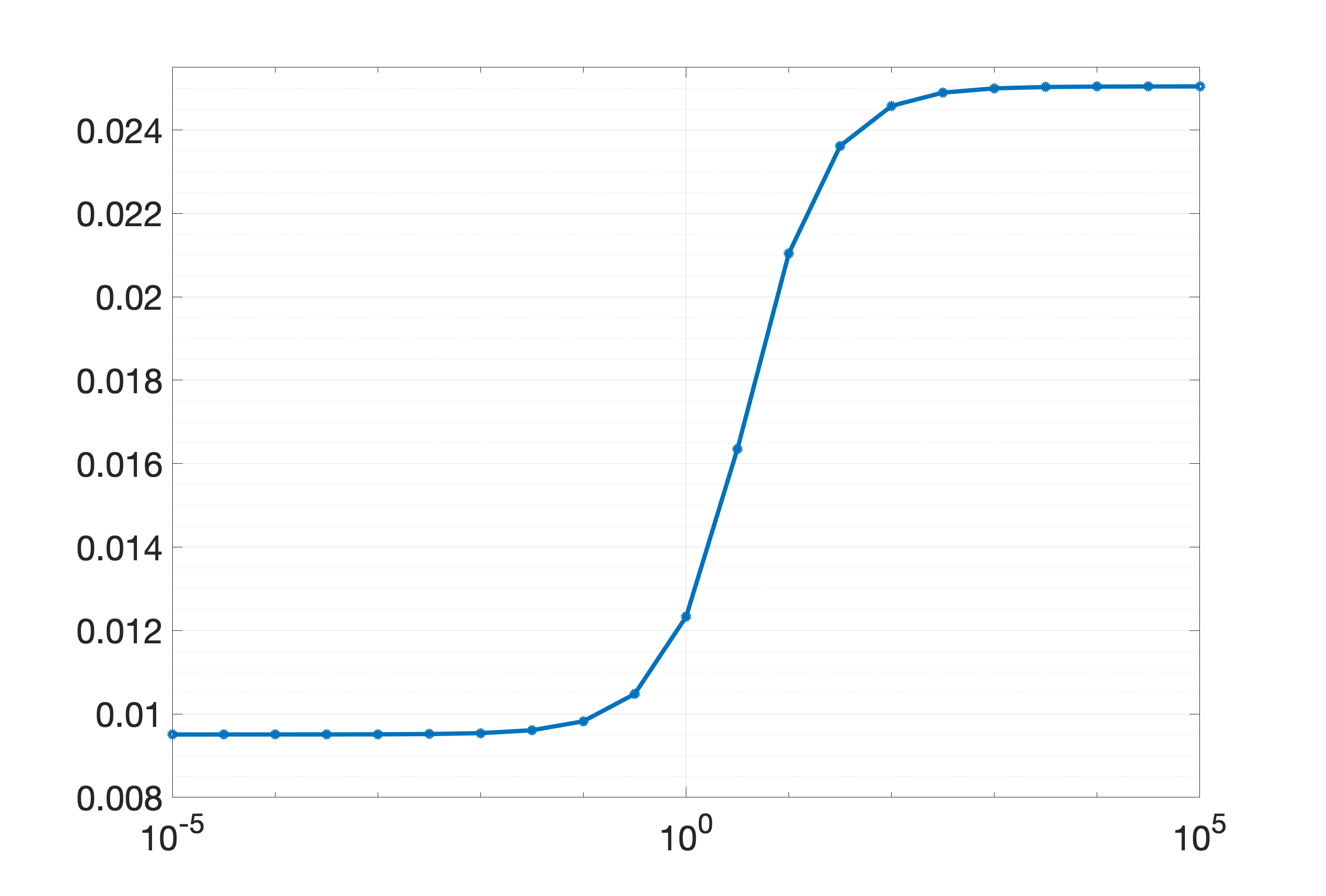}&\rotatebox{90}{\hspace{2.25cm}$\Lambda_{num}$}\hspace*{-0.5cm}&
\includegraphics[width=0.44\textwidth]{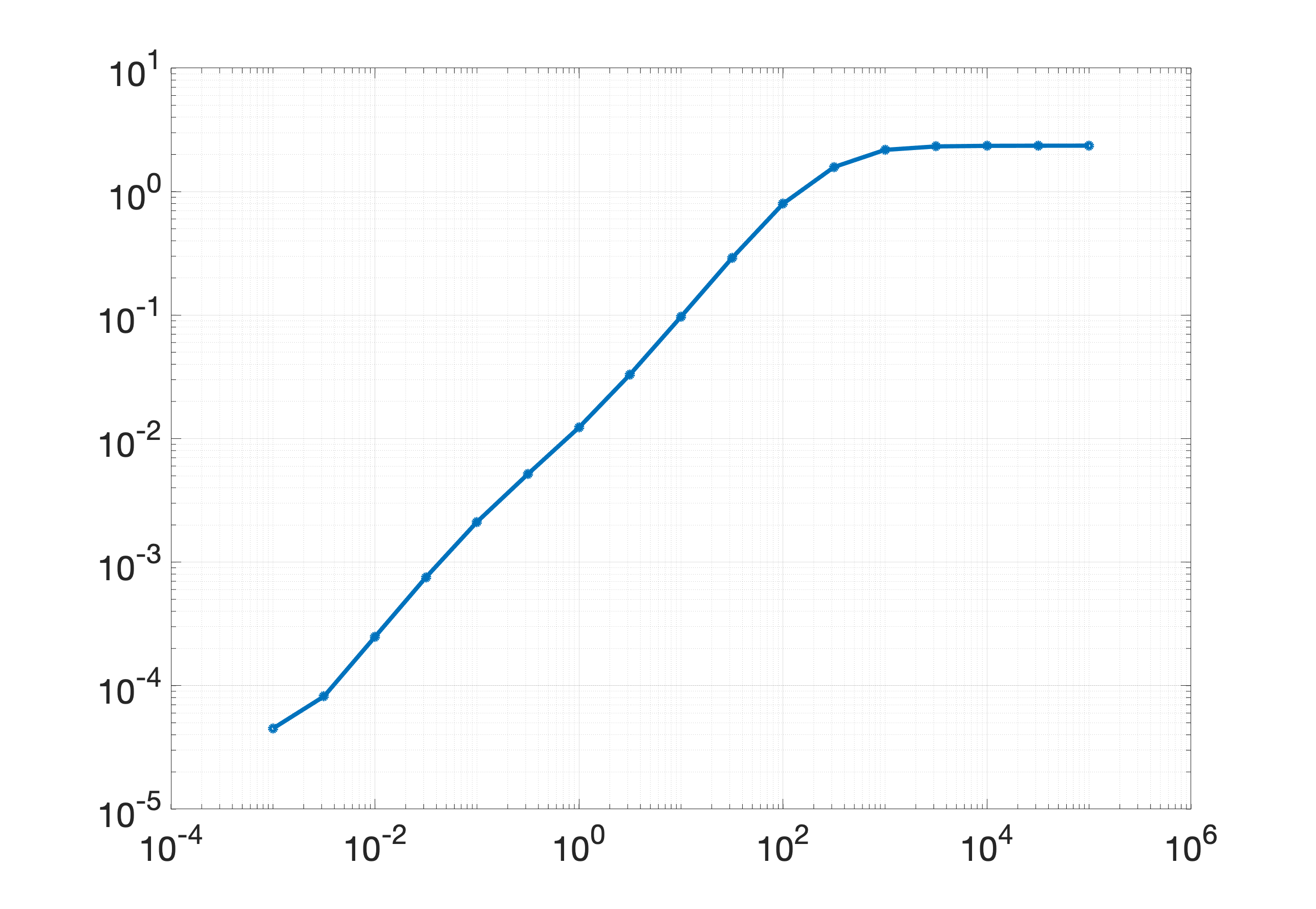}\\[-0.3cm]
& $D$ && $d$
\end{tabular}
\end{center}
\caption{Computed decay rate $\Lambda_{num}$  as a function of $D$ (on the left, in log-lin scale) and as a function of $d$ (on the right, in log-log scale).}\label{fig:D-d}
\end{figure}

\medskip

\noindent{\bf Acknowledgement.} M. A. is supported by  the {\it région Normandie} project BIOMA-NORMAN 21E04343 and the ANR project DEEV ANR-20-CE40-0011-01.  C. C.-H. acknowledges support from the Labex CEMPI (ANR-11-LABX-0007-01).

\bibliographystyle{siam}  

\bibliography{biblio}

\end{document}